\newtheorem{theorem}{Theorem}[section]
\newtheorem{lemma}[theorem]{Lemma}
\newtheorem{e-proposition}[theorem]{Proposition}
\newtheorem{remark}[theorem]{Remark}
\newcommand{\R}{\mathbb{R}}
\newcommand{\RR}{\mathbb{R}}
\def\vt{\vartheta}
\newcommand{\N}{\mathbb{N}}
\newcommand{\C}{\mathbb{C}}
\def\O{\Omega}
\newcommand{\dis}{\displaystyle}
\newcommand{\abs}[1]{\left|#1\right|}
\newcommand{\eps}{\varepsilon}
\newcommand{\norm}[1]{\left\|#1\right\|}
\renewcommand{\a}{\alpha}
\renewcommand{\leq}{\leqslant}
\renewcommand{\geq}{\geqslant}
\renewcommand{\u}{\underline}
\renewcommand{\o}{\overline}
\renewcommand{\tilde}{\widetilde}
\def\di{\displaystyle}
\renewcommand{\t}{\tilde}
\newcommand{\re}{\Re\mathrm{e}}
\newcommand{\im}{\Im\mathrm{m}} 
\newcommand{\bee}{\begin{equation*}}
\newcommand{\eee}{\end{equation*}}
\newcommand{\bc}{\begin{cases}}
\newcommand{\ec}{\end{cases}}
\newcommand{\su}[2]{\genfrac{}{}{0pt}{}{#1}{#2}}
\def\e{\varepsilon}
\begin{document}
\title{\bf The effect of a line with non-local diffusion on Fisher-KPP propagation
 }
\author{Henri {\sc Berestycki}$^{\hbox{a }}$,  Anne-Charline {\sc Coulon}$^{\hbox{b }}$,\\
Jean-Michel {\sc Roquejoffre}$^{\hbox{b }}$, Luca {\sc Rossi}$^{\hbox{c }}$\\
\footnotesize{$^{\hbox{a }}$ Ecole des Hautes Etudes en Sciences Sociales}\\
\footnotesize{CAMS, 190--198 avenue de France, F-75244 Paris cedex 13, France}\\
\footnotesize{$^{\hbox{b }}$ Institut de Math\'ematiques de Toulouse,
Universit\'e Paul Sabatier}\\
\footnotesize{118 route de Narbonne, F-31062 Toulouse Cedex 4, France}\\
\footnotesize{$^{\hbox{c }}$Dipartimento di Matematica, 
Universit\`a degli Studi di Padova}\\
\footnotesize{
Via Trieste, 63 -
35121 Padova, Italy}\\
\date{}
}
\maketitle

\begin{abstract}
\noindent  We propose here a new model of accelerating fronts, consisting  of  
one equation with non-local diffusion on a line, coupled via the 
boundary condition with a  reaction-diffusion 
equation in the upper half-plane. The underlying biological question is to understand how transportation networks 
may enhance biological invasions. We show that  the line accelerates the propagation in the direction of the line and enhances the overall 
propagation in the plane and that the
propagation is directed by diffusion on the line, where it is exponentially 
fast in time.  We also describe completely the invasion in the upper half-plane. This work is a non-local version of the model introduced in \cite{BRR2}, where the line had a strong but local diffusion described by the classical Laplace operator.
 
\end{abstract}

\tableofcontents

\section{Introduction}

This work is a continuation of a program of investigation started in \cite{BRR2, BRR3, BRR4} aiming at understanding the influence  of a line with its own diffusion embedded in a region where a classical reaction-diffusion process takes place. The underlying biological motivation is to understand how a line or a network of roads, lines and streams, may enhance biological invasions. Besides its obvious motivation from ecology for invading biological species, 
this type of questions, also arises in many other contexts, in cellular biology (for cell division mechanisms, see e.g. \cite{ER} for numerical methods , \cite{BFL} for mathematical study by entropy methods), chemical engineering (crystal growth processes, see \cite{KD})  or biophysics (see e.g. \cite{MCV} for a linear stability analysis) and medicine, sometimes in three dimensional regions bounded by a surface having certain properties different from the ones in the bulk.

There are many situations where  networks of lines play a major role. In epidemiology, the spreading of certain diseases is 
known to be strongly dependent on the communication network. In the classical example of 
 the ``Black death'' plague in the middle of  the 14th century, the roads connecting 
trade fair cities allowed the epidemics to expand Northward at a fast pace. The account by  \cite{Sig} describes how it then further spread inwards from these roads to eventually cover whole territories.
A recent invasive species in Europe, related to climate change, the
Pine processionary moth has progressed at a faster rate than anticipated. An hypothesis in this context is the role played by roads in allowing jumps.  See the update in the interdisciplinary volume \cite{roques} on this issue and the related public health concerns. Another example of the effect of lines on propagation in open 
space comes from the influence of seismic lines on movements of wolves in the Western Canadian Forest. These are straight lines dug  across territories by oil companies for the purpose of exploring and monitoring oil reservoirs.
The study in \cite{McK} reports the observation that populations of wolves tend to move and concentrate on seismic lines, allowing them to move along larger distances.

In \cite{BRR2}, \cite{BRR4}, three of the authors introduced and studied a new model to
describe invasions in the
plane when a fast diffusion takes place on a straight line.  
In this model,   the line $\{x=0\}$ in the plane   $\R^2$
- referred to as ``the road'' - carries a density $u(x,t)$ of the population. 
The rest of the plane is called ``the field'', and the density there is denoted by $v(x,y,t)$.
By symmetry, the problem may be restricted to the upper
half-plane $\O:=\R\times(0,+\infty)$, where the dynamics is assumed to be represented by a standard Fisher-KPP equation 
with diffusivity $d$. There is no reproduction on the road, where  the diffusivity coefficient is another constant $D$. 
The road and the field exchange individuals.  The flux condition results from the road yielding a proportion $\mu$ of $u$ to the field, and a proportion $\nu$ of $ {v}\big\vert_{y=0}$ jumping from the field on the road. The opposite of the flux for $v$ appears as a source term in the equation for $u$.

The system thus reads  as follows:
\begin{equation}
\label{Cauchy}
\begin{cases}
\partial_t u-D \partial_{xx} u= \nu{v}\big\vert_{y=0}-\mu u, &
x\in\R,\
t>0\\
\partial_t v-d\Delta v=f(v), & (x,y)\in\O,\ t>0\\
-d\partial_y{v}\big\vert_{y=0}=\mu u-  \nu{v}\big\vert_{y=0}, & x\in\R,\ t>0,
\end{cases}
\end{equation}
where $d,D,\mu,\nu$ are positive constants and the function $f$ is smooth and satisfies the 
Fisher-KPP condition:
$$f(0)=f(1)=0,\quad f>0\text{ in }(0,1),\quad f<0\text{ in
}(1,+\infty),\quad f(s)\leq f'(0)s \text{ for }s>0.$$
The initial conditions are:
$$
{v}\big\vert_{t=0}=v_0\quad\text{in }\O,\qquad
{u}\big\vert_{t=0}=u_0\quad\text{in }\R.
$$
Let $c_K$ denote the classical Fisher--KPP spreading velocity (or invasion speed)  in
the field:
$$
c_K:=2\sqrt{df'(0)}.
$$
The fundamental paper \cite{kolmo} analyzed the solution to the problem
$$
u_t-\partial_{xx}u=f(u), \ \ u(x,0)=H(x)\ \hbox{(the Heaviside function)}.
$$
Among other things, \cite{kolmo} shows that, modulo a shift in time, the solution converges to a travelling wave of speed $c_K$. 
This is also  the asymptotic speed at which the population would spread in any direction in the open
space -  in the absence of the road - starting from a confined distribution, 
i.e. with compact support  (see \cite{AW2}, \cite{AW}).

The main result of  \cite{BRR2}  is the following
\begin{theorem}[\cite{BRR2}]
\label{t1.2}
There exists
$c_*(D)\geq c_K$ such that, if $(v,u)$ is the solution of \eqref{Cauchy} 
emanating from an arbitrary nonnegative, compactly supported, initial condition $(v_0,u_0)\not\equiv(0,0)$, it holds that
\begin{equation}
\label{e1.5}
\begin{array}{rll}
&\displaystyle \forall c>c_*,\quad\
\lim_{t\to+\infty}\sup_{\su{|x|>ct}{y\geq0}}
|(v(x,y,t),u(x,t))|=0,\\
&\displaystyle \forall c<c_*,\ a>0,\quad\ 
\lim_{t\to+\infty}\sup_{\su{|x|<ct}{0\leq 
y<a}}|(v(x,y,t),u(x,t))-(\nu/\mu,1)|=0.
\end{array}
\end{equation}
Moreover, $c_*(D)>c_K$ if and only if $D>2d$, and 
\begin{equation}
\label{e1.10}
c_*(D)\sim c_\infty\sqrt{D}\quad\text{as }\;D\to+\infty,\quad\text{with 
}c_\infty>0.
\end{equation}
\end{theorem}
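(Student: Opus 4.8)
The strategy follows the standard route for Fisher-KPP spreading: construct a candidate speed $c_{*}(D)$ from the linearised system, establish matching upper and lower bounds by comparison, and then extract the two quantitative refinements from the dispersion relation. The coupling through the boundary condition is what makes each step nonstandard.

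\textbf{Step 1 (linearised problem and the speed $c_{*}$).} Linearise \eqref{Cauchy} at $(0,0)$ — legitimate since $f(s)\le f'(0)s$ — and look for solutions of the form $\big(e^{-\alpha(x-ct)},\, \varphi_{0}e^{-\gamma y}e^{-\alpha(x-ct)}\big)$ with $\alpha>0$, $\gamma\ge0$. The field equation forces $d(\alpha^{2}+\gamma^{2})=c\alpha-f'(0)$ (so for $\gamma$ to be real for some $\alpha$ one needs $c\ge c_{K}$), while the road equation together with the exchange condition forces
\begin{equation*}
(c\alpha-D\alpha^{2})(d\gamma+\nu)+\mu d\gamma=0 .
\end{equation*}
For each admissible $\alpha$ these relations determine a speed, and $c_{*}(D)$ is defined as the infimum of the speeds $c$ for which the linearised system admits a nonnegative \emph{supersolution} of the above exponential form in the frame moving at speed $c$; one has $c_{*}(D)\ge c_{K}$ always, and (see Step 4) $c_{*}(D)=c_{K}$ exactly when $D\le2d$.

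\textbf{Step 2 (upper bound).} Fix $c>c_{*}(D)$ and take the corresponding exponential supersolution; capping it from above by the constants $1$ and $\nu/\mu$, which form an equilibrium of \eqref{Cauchy}, preserves all the differential inequalities and the exchange condition and yields a generalised supersolution $(\overline v,\overline u)$ of \eqref{Cauchy}. Since $(v_{0},u_{0})$ is compactly supported, $(v_{0},u_{0})\le M(\overline v,\overline u)$ for $M$ large, and the comparison principle for \eqref{Cauchy} gives $\sup_{|x|>ct,\ y\ge0}|(v,u)|\to0$. Arguing symmetrically for $x\to-\infty$ yields the first line of \eqref{e1.5}.

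\textbf{Step 3 (lower bound and convergence to $(\nu/\mu,1)$ — the crux).} First, the strong maximum principle for the system shows $(v,u)$ is everywhere positive for $t>0$. Next, for $c<c_{*}(D)$ one builds a \emph{compactly supported} subsolution: take the principal eigenfunction of the linearised operator posed on a large truncated box (with Dirichlet data on the artificial boundary and the exchange condition on the road segment), multiply it by a small constant and a cut-off, and place it in a frame moving at speed $c_{R}$, where $c_{R}\uparrow c_{*}(D)$ as $R\to\infty$; for small enough amplitude this is a subsolution of \eqref{Cauchy} lying below $(\nu/\mu,1)$. Fitting such a bump under $(v,u)$ at some positive time and propagating it by the comparison principle shows $(v,u)$ is bounded away from $0$ on every set $\{|x|<ct,\ 0\le y<a\}$. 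Finally one upgrades ``bounded below'' to convergence to $(\nu/\mu,1)$ by a Liouville-type argument: the only bounded nonnegative stationary solutions of \eqref{Cauchy} are $(0,0)$ and $(\nu/\mu,1)$, and a sweeping argument with the steady state $(\nu/\mu,1)$ rules out persistent oscillation; this gives the second line of \eqref{e1.5}. I expect essentially all the genuine difficulty to sit here — the truncation and the exchange condition in the eigenvalue problem, the passage $c_{R}\to c_{*}(D)$, and the classification of the steady states.

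\textbf{Step 4 (the two refinements).} For $c_{*}(D)>c_{K}\iff D>2d$: at $c=c_{K}$ the only admissible decay rate is the critical KPP rate $\alpha_{0}=\sqrt{f'(0)/d}$, at which $\gamma=0$, and the $y$-independent exponential solves the road relation with an inequality of the right sign iff $D\alpha_{0}^{2}-c_{K}\alpha_{0}=f'(0)\big(\tfrac Dd-2\big)\le0$, i.e.\ iff $D\le2d$ — giving a supersolution at speed $c_{K}$, hence $c_{*}(D)=c_{K}$; when $D>2d$ a continuity argument on the dispersion relation shows that no exponential supersolution travels at a speed near $c_{K}$, so $c_{*}(D)>c_{K}$. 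For \eqref{e1.10}: rescaling $x\mapsto x/\sqrt D$ turns $D\partial_{xx}u$ into $\partial_{xx}u$ while $d\Delta v\to d\partial_{yy}v$, so the limiting field equation loses its $x$-diffusion; equivalently, substituting $\alpha=a/\sqrt D$ and $c=\chi\sqrt D$ in the two relations of Step 1 and letting $D\to+\infty$ yields the closed system
\begin{equation*}
\chi^{2}\Big(\chi a+\frac{\mu d\gamma}{d\gamma+\nu}\Big)=(\chi a)^{2},\qquad d\gamma^{2}=\chi a-f'(0),
\end{equation*}
whose infimum over $a$ is a finite constant $c_{\infty}>0$. Since the decay rates that matter are of order $1/\sqrt D$, one concludes $c_{*}(D)/\sqrt D\to c_{\infty}$, which is \eqref{e1.10}.
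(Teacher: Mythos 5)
Theorem \ref{t1.2} is quoted in this paper from \cite{BRR2} without proof, so the only meaningful comparison is with that reference. Your outline reproduces its architecture faithfully: the three algebraic relations you extract from the exponential ansatz (field, road, exchange) are exactly the dispersion system of \cite{BRR2}; $c_*$ is defined there precisely as the threshold speed for existence of exponential supersolutions of the linearisation; the upper bound is by comparison (note that the capping by $(1,\nu/\mu)$ is unnecessary, since $f(s)\le f'(0)s$ already makes a large multiple of the pure exponential a supersolution of the full nonlinear system — and if you do keep the capping, you must check the exchange inequality at points where only one component is truncated; it holds, but not for free); the threshold computation at $\alpha_0=\sqrt{f'(0)/d}$, $\gamma=0$ is the right one; and the $\sqrt D$ asymptotics is obtained by exactly your rescaling of the dispersion system. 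The one step where you genuinely diverge is the subsolution: \cite{BRR2} does not truncate a principal-eigenvalue problem, but instead perturbs the dispersion relation into the complex plane for $c<c_*$ (after lowering $f'(0)$ to $f'(0)-\delta$), producing sign-changing exponential solutions of the penalised linearisation whose positive part, cut off and multiplied by a small constant, is a generalised compactly supported subsolution compatible with the exchange condition. The truncated-eigenvalue route you propose is the one adopted in the sequels (e.g.\ \cite{BRR3}, \cite{RTV}); it is more robust (it survives transport and mortality terms on the road) but requires establishing existence, simplicity and large-box convergence of a principal eigenvalue for a coupled, non-self-adjoint boundary system, which is itself nontrivial. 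The one point I would press you on is Step 4: ``no exponential supersolution at speeds near $c_K$'' gives $c_*(D)>c_K$ only after you have verified that the set of speeds admitting exponential supersolutions is a closed half-line $[c_*,+\infty)$ and that the subsolution construction of Step 3 reaches every $c<c_*$, i.e.\ that the upper and lower thresholds coincide. That matching is the real content of the analysis of the algebraic system in \cite{BRR2}, and in your write-up it is asserted in Step 1 rather than argued.
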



 In other words, the solution spreads 
at velocity $c_*$ in the direction of the road, and the propagation is strongly enhanced when $D$ is large, even though there is
no reproduction on the road. 
This theorem is completed in \cite{BRR4} by a precise study of
the expansion in the field and the determination of the asymptotic speed of 
propagation in every direction. 
\begin{theorem}[\cite{BRR4}]
\label{t1.1}
There exists $w_*\in C^1([-\pi/2,\pi/2])$ such that
$$\forall \gamma >w_*(\vt),\quad
\lim_{t\to+\infty}  
v(x_0+ \gamma t\sin\vt, y_0+ \gamma t\cos\vt,t) =0,$$
$$\forall0\leq \gamma <w_*(\vt),\quad\lim_{t\to+\infty} 
 v(x_0+\gamma t\sin\vt,y_0+ 
\gamma t\cos\vt,t) = 1,$$
locally uniformly in $(x_0, y_0)\in\overline\O$ 
and uniformly  on sets  $(\gamma,\vt)\in \large\{ \R_+\times[-\pi/2,\pi/2], |\gamma-w_*(\vt)|>\eta \large\}$, 
for any given $\eta>0$.

Moreover, $w_*\geq c_K$ and, if $D> 2d$,  there is $\vt_0\in(0,\pi/2)$ such 
that $w_*(\vt)>c_K$ if and only if $|\vt|>\vt_0$.
\end{theorem}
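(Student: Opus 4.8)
The plan is to first identify $w_*$ by an explicit geometric-optics (Huygens) formula encoding a refraction principle, and then prove the two spreading alternatives by comparison: explicit super-solutions of the linearised equation for the upper bound, and a relay argument based on Theorem~\ref{t1.2} for the lower bound. \emph{Step 1 (definition and properties of $w_*$).} By the $x\mapsto-x$ symmetry it suffices to treat $\vt\in[0,\pi/2]$, and I would \emph{define}
\[
\frac{1}{w_*(\vt)}:=\inf_{\xi\ge0}\left(\frac{\xi}{c_*(D)}+\frac{1}{c_K}\sqrt{(\sin\vt-\xi)^2+\cos^2\vt}\right),
\]
where $\xi/c_*(D)$ is the time needed, by Theorem~\ref{t1.2}, to invade the road point $(\xi,0)$ and the second term is the time for the field, running at the Fisher--KPP speed $c_K$, to reach $(\sin\vt,\cos\vt)$ from $(\xi,0)$. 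A one-variable minimisation (the stationarity condition being Snell's law $\frac{\sin\vt-\xi}{\sqrt{(\sin\vt-\xi)^2+\cos^2\vt}}=\frac{c_K}{c_*(D)}$) gives $w_*\equiv c_K$ when $c_*(D)=c_K$, and when $c_*(D)>c_K$, with $\vt_0:=\arctan\big(c_K/\sqrt{c_*(D)^2-c_K^2}\big)\in(0,\pi/2)$,
\[
w_*(\vt)=c_K\quad(0\le\vt\le\vt_0),\qquad w_*(\vt)=\frac{c_*(D)\,c_K}{c_K\sin\vt+\sqrt{c_*(D)^2-c_K^2}\,\cos\vt}\quad(\vt_0\le\vt\le\pi/2).
\]
Since $c_*(D)>c_K\iff D>2d$ (Theorem~\ref{t1.2}), one reads off directly that $w_*\ge c_K$, that $w_*(\vt)>c_K$ iff $D>2d$ and $|\vt|>\vt_0$, and—because at $\vt_0$ the one-sided derivatives of the two pieces both vanish—that $w_*\in C^1([-\pi/2,\pi/2])$.

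\emph{Step 2 (upper bound: $\gamma>w_*(\vt)\Rightarrow v\to0$).} By Theorem~\ref{t1.2} and parabolic bounds, $u$ is globally bounded and, for every $\delta>0$, $u(x,t)\to0$ uniformly on $\{|x|\ge(c_*(D)+\delta)t\}$. In $\O$ the function $v$ is a subsolution of $\partial_t V-d\Delta V=f'(0)V$ with $-d\partial_yV\big|_{y=0}+\nu V\big|_{y=0}\le\mu u$, so by Duhamel against the (Gaussian-dominated) Robin heat kernel of $\O$, $v(X,Y,t)$ is bounded, up to polynomial-in-$t$ factors, by the initial-datum contribution $O\big(e^{f'(0)t-(X^2+Y^2)/4dt}\big)$ plus
\[
\sup\Big\{\,e^{f'(0)(t-s)-\frac{(X-\xi)^2+Y^2}{4d(t-s)}}\ :\ 0\le s\le t,\ |\xi|\le(c_*(D)+\delta)s\,\Big\}.
\]
Because $c_K=2\sqrt{df'(0)}$, this exponent is $\ge0$ for some admissible $(s,\xi)$ exactly when $\frac{1}{c_K}\sqrt{(X-\xi)^2+Y^2}+\frac{|\xi|}{c_*(D)+\delta}\le t$ for some $\xi$, i.e.\ (by homogeneity) when $\gamma\le w_*^{\delta}(\vt)$, the analogue of $w_*$ with $c_*(D)$ replaced by $c_*(D)+\delta$. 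Given $\eta>0$, uniform continuity of $\delta\mapsto w_*^{\delta}$ on $[-\pi/2,\pi/2]$ lets us fix $\delta$ so small that $w_*^{\delta}<w_*+\eta$ everywhere; then on $\{\gamma\ge w_*(\vt)+\eta\}$ the exponent stays below $-\eta' t$ for some $\eta'>0$, whence $v\to0$ uniformly there (a bounded shift $(x_0,y_0)\in\overline\O$ affects only lower-order terms).

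\emph{Step 3 (lower bound: $\gamma<w_*(\vt)\Rightarrow v\to1$).} This is a relay between road and field propagation. Fix $c<c_*(D)$, $a>0$, $\eps>0$; by Theorem~\ref{t1.2} there are $m>0$ and $t_1$ with $v\ge m$ on $\{0\le y\le a,\ |x|\le ct\}$ for all $t\ge t_1$. In the sub-domain $\{y>a\}$, where $v$ solves the pure Fisher--KPP equation and the line $\{y=a\}$ carries the lower bound $v\ge m\,\mathbf 1_{\{|x|\le ct\}}$, I would bound $v$ from below by classical compactly supported KPP sub-solutions, one launched from each point $(\xi,a)$ as soon as it is lit (i.e.\ from time $|\xi|/c$), expanding into $\{y>a\}$ at speed $c_K-\eps$; launch times and radii are arranged so the moving supports never protrude past the lit segment, and standard KPP arguments then push $v$ up to $1$ on those supports. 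Taking the union over all launch points and times, and then letting $c\uparrow c_*(D)$, $a\downarrow0$, $\eps\downarrow0$, yields $v\to1$ on the full ``road-then-field'' reachable set $\{\gamma<w_*(\vt)\}$, uniformly on $\{\gamma\le w_*(\vt)-\eta\}$; points at bounded height near the road are covered directly by Theorem~\ref{t1.2}, and the choice $\xi\approx0$ already recovers the direct field invasion in the disc of radius $\approx c_K t$, so no separate argument is needed for $|\vt|\le\vt_0$. (Equivalently one constructs a single ``moving-patch'' sub-solution whose centre follows a broken path, along the road then into the field, coupled to the road dynamics while it rests on $\{y=0\}$.)

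\emph{Main obstacle.} The delicate point is Step~3: turning the heuristic Huygens principle into a rigorous comparison means handling the coupled Robin condition at $\{y=0\}$ for the sub-solutions—which I sidestep by invoking the strip lower bound of Theorem~\ref{t1.2} and spreading strictly inside $\{y>a\}$, where the equation is uncoupled—and, more seriously, patching infinitely many localised invasions (or designing the broken-path patch) without any loss of speed, so that the reachable set is \emph{exactly} $\{\gamma<w_*(\vt)\}$ and the convergence is uniform right up to the critical curve $\gamma=w_*(\vt)$. On the upper-bound side, the subtlety is that the geometry is genuinely two-dimensional and coupled through the boundary, so the sharp exponential decay rate must come from the half-plane Robin kernel—equivalently, from the road dispersion relation of \cite{BRR2}—rather than from a one-dimensional computation. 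Once both bounds are in place, the $C^1$ regularity of $w_*$ and the value of $\vt_0$ follow immediately from the explicit formula of Step~1.
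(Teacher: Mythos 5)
First, a framing point: the paper you are reading does not prove this statement — it is quoted verbatim from \cite{BRR4} as background — so your proposal has to be measured against the proof given there. Your Step 1 is correct and recovers exactly the expansion speed of \cite{BRR4}: the one--variable minimisation does give Snell's law, $\vt_0=\arcsin(c_K/c_*(D))$, $w_*(\pm\pi/2)=c_*(D)$, $w_*(\vt_0)=c_K$ with vanishing one--sided derivatives on both sides (hence $C^1$), and $w_*(\vt)>c_K$ strictly for $|\vt|>\vt_0$ by Cauchy--Schwarz applied to $c_K\sin\vt+\sqrt{c_*^2-c_K^2}\cos\vt\leq c_*$. The broken--ray (road-then-field) interpretation is precisely the one discussed in \cite{BRR4}. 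Your Step 3 is essentially their lower--bound relay (and is the same mechanism as Section~\ref{longtimebehavior} of the present paper for the nonlocal model): one should, as you implicitly do, avoid launching eigenfunction bumps that would not fit in the thin strip $\{0\leq y\leq a\}$ by instead comparing in $\{y>0\}$ with the Dirichlet problem started from $\delta\mathds{1}_{(-1,1)\times(0,a)}$, whose spreading at speed $c_K$ is classical; the uniformity up to $\gamma=w_*(\vt)-\eta$ then follows by compactness since the losses ($\e$, $a$, the bounded ignition delay) are uniform and of lower order. Where you genuinely diverge from \cite{BRR4} is the upper bound: there it is obtained as a Freidlin--G\"artner--type lower envelope of explicit exponential supersolutions of the coupled linearised system, indexed by the direction of their level lines, whereas you treat the road density as a boundary source and estimate the Robin--Duhamel integral. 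Your route is more "Lagrangian" and makes the optics transparent; the price is that it needs quantitative input on $u$.

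That is where the one genuine gap sits. In Step 2 you control the boundary contribution by
\[
\sup\Big\{e^{f'(0)(t-s)-\frac{(X-\xi)^2+Y^2}{4d(t-s)}}\ :\ 0\leq s\leq t,\ |\xi|\leq (c_*(D)+\delta)s\Big\},
\]
silently discarding the part of the Duhamel integral over the unlit region $|\xi|>(c_*(D)+\delta)s$. There $u$ is only known, from the \emph{statement} of Theorem \ref{t1.2}, to tend to $0$ uniformly; a bound of the form $u\leq\eps(s)$ with $\eps(s)\to0$ is not enough to beat the factor $e^{f'(0)(t-s)}$ coming from the zero--order term, and the discarded integral is a priori of size $\eps\cdot e^{f'(0)t}$. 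What you actually need is the quantitative exponential decay $u(\xi,s)\leq Ce^{-\lambda(|\xi|-(c_*(D)+\delta)s)}$ for $|\xi|\geq(c_*(D)+\delta)s$, which is exactly what the exponential supersolutions constructed in the \emph{proof} of Theorem \ref{t1.2} in \cite{BRR2} provide. Once that estimate is invoked, the unlit contribution is absorbed into the same supremum (with $\lambda$ entering the optimisation harmlessly for $\delta$ small) and your Step 2 closes; everything else in the proposal is sound.
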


This theorem provides the spreading velocity in every 
direction $(\sin \vt,\cos\vt)$, and reveals a critical angle phenomenon: the 
road influences 
the propagation on the field not only in the horizontal direction,
but rather up to an angle $\pi/2-\vt_0$ from it. It is further shown in 
\cite{BRR4} that $\vt_0\to0$ as $D\to+\infty$.
The theorem is illustrated by the numerical simulation of Figure \ref{laplathese}, reported from the second author's PhD thesis
\cite{ACC_these}.
\begin{figure}[ht]
\begin{center}
\includegraphics[width=14cm]{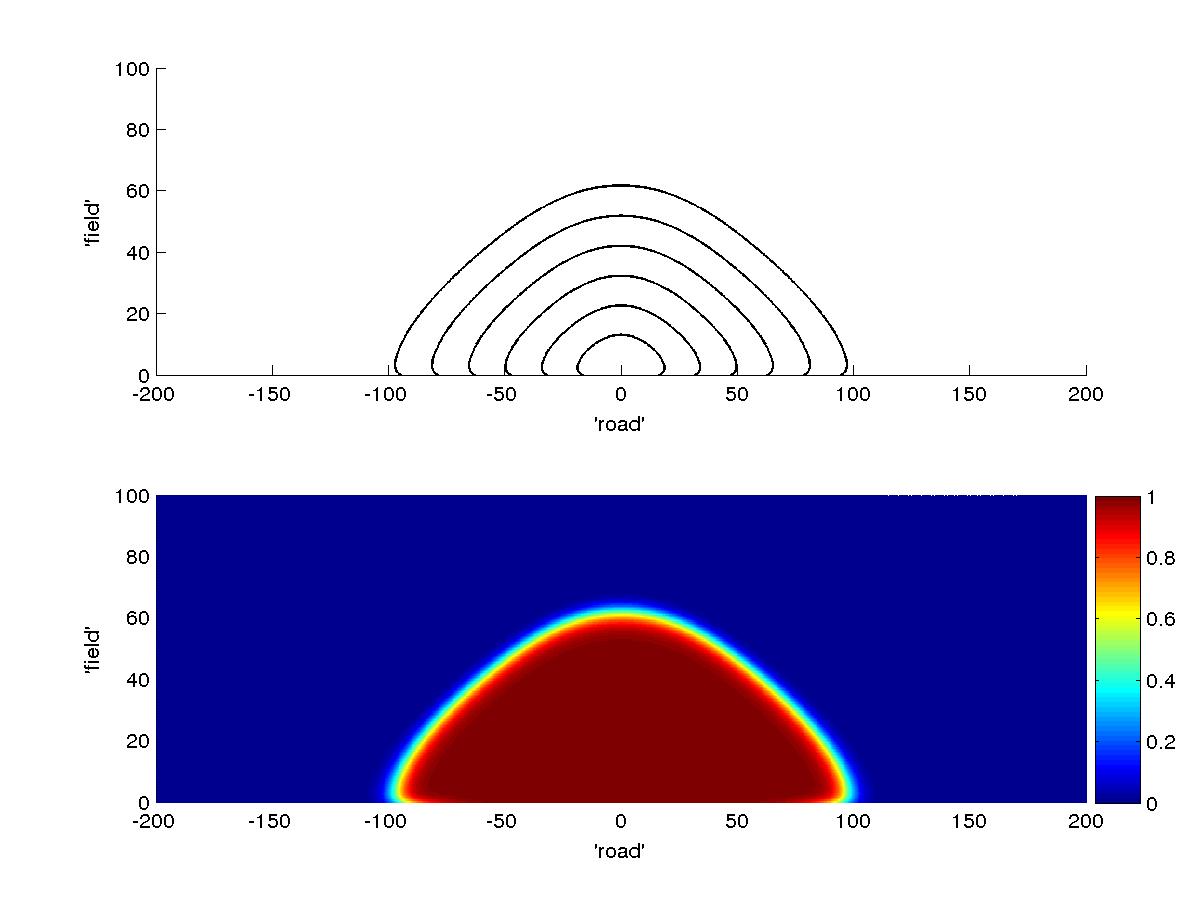}
\end{center}
\caption{Level sets of $v$}
\label{laplathese}
\end{figure}
See also  the review \cite{BCRRSchwartz} for a discussion of some of these aspects, as well as some open questions.

Model \eqref{Cauchy} is an example of propagation guided by a set of lower 
dimension, a topic that has recently attracted much interest. See for instance 
\cite{ACR} and \cite{BTS} for non-local models for front propagation guided by 
favored genetic traits,  and \cite{BC} for the study of fronts guided by a 
line. In the case of interest here, the scenario depicted by Theorems 
\ref{t1.2}-\ref{t1.1} displays a {\it propagation speed-up}, or propagation 
enhancement: the spreading velocity can be much larger than the reference speed 
$c_K$, but remains asymptotically constant. One may wonder if different 
conditions may lead, not only to speed-up, but {\it acceleration}: in other 
words, the front velocity increases in time. In \eqref{Cauchy} we have only 
considered standard, local diffusion. So, it is natural to ask if non-local 
diffusion will accelerate the propagation, 
rather than only speed it up. 

The goal of this paper is to investigate this question.

\section{Model with non-local dispersal on the road}

\subsection{Description of the model and question}
To account for the 
possibility that individuals on the road may move much faster than in the field, we 
will consider a non-local diffusion on the road: this means that the underlying 
processes modelling the displacement of individuals are jump processes. When 
those are stable L\'evy processes, the corresponding diffusion operator is the 
fractional Laplacian: this is the choice we make here.
Thus the system under study is 
\begin{equation}\label{gdmodele3}\left\{\begin{array}{rcll}
\partial_t v-\Delta v &=&f(v),&x \in\R,\ y >0,\ t>0 \\
\partial_tu+(-\partial _{xx})^{\a}u&=& -\mu u +\nu v- k u,& x \in \R,\ y=0,\ 
t>0\\
-\partial_y v&=&\mu u - \nu v, & x \in \R,\ y=0,\ t>0.\\
\end{array}\right.\end{equation}
We assume that the reaction term $f$  is strictly concave and smooth, with  
$f(0)=f(1)=0$. This assumption will always be  understood in the following without further reference.
Note that the second equation, representing the evolution of the density on 
the road, does not involve a reproduction term, but only possibly a mortality term $- ku$, 
with $k\geq0$ constant. The exchange factor $\mu$ is a positive constant and we assume, 
without loss of generality, that $\nu=1$.
The operator $(-\partial_{xx})^\alpha$, $0<\alpha<1$, is  the fractional 
Laplacian
$$
(-\partial_{xx})^\alpha u(x):=c_\alpha  \int_{\R}\frac{u(x)-u(y)}{\vert x-y\vert^{1+2\alpha}}dy.
$$
 The constant $c_\alpha$ is adjusted so that $(-\partial_{xx})^\alpha$  is a pseudo-differential operator with symbol $\abs{\xi}^{2\a}$. Note that, when $\alpha<1/2$ this has to be taken in the sense of convergent integrals, while when $\alpha\geq 1/2$ this definition  should be understood in the principal values sense.
We complete the system with bounded, continuous initial conditions 
$v(\cdot,\cdot, 0)=v_0$ and $u(\cdot,0)=u_0$.

The goal of the present paper is to understand how, and at what speed, the level sets of $u$ and $v$ will spread for large times. We will see that, in this system, propagation takes advantage of both the fast diffusion on the road and the mass creation term in the field.
\subsection{Main results and interpretation}
The  first step  is to identify a unique steady state of the system and to 
show that it is the global attractor of the evolution problem. 
\begin{theorem}\label{cvstat}
Problem \eqref{gdmodele3} admits a unique positive, bounded stationary 
solution $(V_s,U_s)$. Moreover, $(V_s,U_s)$ is $x$-independent, and solutions $(v,u)$ to 
\eqref{gdmodele3} starting from nonnegative, bounded initial 
data $(v_0,u_0)\not\equiv(0,0)$ satisfy
$$(v(x,y,t),u(x,t))\underset{t\rightarrow +\infty}\longrightarrow(V_s(y),U_s),
$$
locally uniformly in $(x,y) \in \R\times[0,+\infty)$.
\end{theorem}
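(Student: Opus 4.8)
The plan is to split the statement into three parts: existence and explicit description of the stationary solution, its uniqueness, and the long-time convergence. For the stationary problem, I would first exploit that the road equation has no reproduction term, so looking for an $x$-independent steady state $(V_s(y), U_s)$ reduces the system to an ODE boundary-value problem: $-V_s'' = f(V_s)$ on $(0,+\infty)$, with $(-\partial_{xx})^\alpha U_s = 0$ forcing (for a bounded $U_s$) the algebraic relation $(\mu+k)U_s = \nu V_s(0)$, coupled to the Robin-type condition $-V_s'(0) = \mu U_s - \nu V_s(0) = -k U_s \le 0$, hence $V_s'(0)\le 0$. Since $f$ is strictly concave with $f(0)=f(1)=0$, the only bounded nonnegative solution of $-V''=f(V)$ on a half-line with $V'(0)\le 0$ is the constant $V_s\equiv 1$ (a standard phase-plane argument: any non-constant trajectory either blows up or crosses zero). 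Feeding this back gives $U_s = \nu/(\mu+k)$ and forces $V_s'(0)=0$, which is consistent only because $k U_s = \nu - \mu U_s$ is then automatically satisfied; so $(V_s,U_s) = (1,\ \nu/(\mu+k))$ is the unique $x$-independent positive bounded steady state.

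The more delicate point is ruling out $x$-\emph{dependent} positive bounded stationary solutions, i.e. proving uniqueness in full generality. Here I would use a sliding/sweeping argument combined with the strong maximum principle for the coupled system. The key structural fact is that the system is cooperative and admits a comparison principle (one would first establish, or cite from the companion local model \cite{BRR2}, a maximum principle for \eqref{gdmodele3}: nonnegativity is preserved, and ordered data stay ordered). Given any bounded positive stationary $(\bar v, \bar u)$, one shows it is bounded above by $(1, \nu/(\mu+k))$ — for instance by taking the sup in $x$ and using concavity of $f$ to get a supersolution — and bounded below away from $0$ on compact sets by a sub-solution built from a principal-eigenfunction computation (this is where $f'(0)>0$, implicit in concavity with $f(0)=0=f(1)$, is used: the zero state is linearly unstable, so small sub-solutions grow). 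Then a continuity/sliding argument in $y$ (or a direct application of the strong maximum principle to the difference with the constant state) forces $(\bar v,\bar u)\equiv(1,\nu/(\mu+k))$.

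For the convergence statement, the natural route is to sandwich the solution between the steady state and a family of solutions of ODEs that converge to it. Starting from arbitrary nonnegative bounded data $(v_0,u_0)\not\equiv(0,0)$: on one side, compare with the solution issued from a large constant super-solution $(M, M)$ with $M$ large enough that $f(M)<0$ and $\nu M - (\mu+k)M<0$; this super-solution is nonincreasing in $t$ and, being $x$-independent, its limit as $t\to\infty$ is a stationary solution $\ge(1,\nu/(\mu+k))$, hence equals it by uniqueness. On the other side, by the maximum principle and the fact that $(0,0)$ is linearly unstable, after any positive time the solution is bounded below on compacts by a positive constant; then compare from below with the solution issued from a small constant $(\delta,\delta)$, which is nondecreasing in $t$ and converges to the same steady state. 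The squeeze, together with parabolic interior estimates to upgrade the convergence to locally uniform, yields the claim. The main obstacle I anticipate is the uniqueness of the steady state among $x$-dependent solutions: the nonlocal operator on the road has no boundary term and interacts with the field only through a flux condition on a codimension-one set, so the usual sliding method has to be adapted carefully to the half-plane geometry and to the fact that $(-\partial_{xx})^\alpha$ is nonlocal — one must check that a shifted and scaled constant on the road remains a valid super/sub-solution, which is immediate since $(-\partial_{xx})^\alpha$ annihilates constants, but the interplay at $y=0$ requires the full coupled comparison principle rather than separate ones.
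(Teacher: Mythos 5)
Your reduction of the stationary problem contains a sign error that propagates into a wrong identification of the steady state. From $(\mu+k)U_s=\nu V_s(0)$ the exchange condition gives $-V_s'(0)=\mu U_s-\nu V_s(0)=-kU_s$, i.e. $V_s'(0)=kU_s\geq 0$ (strictly positive when $k>0$), \emph{not} $V_s'(0)\leq 0$ as you wrote. Consequently the phase-plane analysis does not select the constant $V_s\equiv 1$: for $k>0$ the relevant orbit is the increasing branch of the stable manifold of the saddle $(1,0)$ of $-V''=f(V)$, so $V_s$ is a genuinely $y$-dependent profile with $V_s(0)=(\mu+k)U_s<1$, $V_s'(0)=kU_s>0$ and $V_s(+\infty)=1$. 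Your claimed steady state $(1,\nu/(\mu+k))$ is not a solution when $k>0$: it has $-V_s'(0)=0$ while $\mu U_s-\nu V_s(0)=-k/(\mu+k)\neq 0$, so the flux condition fails (and your "consistency" remark about $kU_s=\nu-\mu U_s$ is exactly the inconsistency). The paper states this explicitly: $(V_s,U_s)\equiv(1,1/\mu)$ only when $k=0$, and in general $V_s=V_s(y)$ with only $V_s(+\infty)=1$. The error also contaminates the upper barrier $(1,\nu/(\mu+k))$ invoked in your uniqueness and convergence arguments.

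Concerning the overall route: the paper does not reprove uniqueness or convergence; it observes that Lemma 2.3 of \cite{BRR3} (the solution is asymptotically trapped, locally uniformly, between two positive $x$-independent stationary solutions) is established without using the equation on the road and hence applies verbatim here, since $x$-independent states are annihilated by $(-\partial_{xx})^\alpha$; Proposition 3.1 of \cite{BRR3} then yields uniqueness of the positive $x$-independent steady state. Your squeeze strategy is in the same spirit, but the step "compare from below with the solution issued from a small constant $(\delta,\delta)$" is not justified by a lower bound on compact sets only: the comparison principle for this coupled system requires ordering on the whole half-plane and the whole road, and upgrading a local-in-space lower bound to the global asymptotic trapping is precisely the nontrivial content of the cited Lemma 2.3. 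So besides the wrong steady state, the convergence part has a genuine gap at exactly the point the paper outsources to \cite{BRR3}.
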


This Liouville-type result is a consequence of \cite{BRR3} (see also \cite{BRR}). Indeed,
it is shown in Lemma 2.3 there that the solution $(v,u)$ to \eqref{Cauchy} lies 
asymptotically between two 
positive $x$-independent stationary solutions $(V_1,U_1)$ and $(V_2,U_2)$. 
One can check that this property is proved without exploiting the equation on 
the road, and hence it holds true for
\eqref{gdmodele3}. Notice that $(V_1,U_1)$ and $(V_2,U_2)$ are solutions to 
\eqref{gdmodele3} as well, because they are $x$-independent. Then, Proposition 
3.1 of \cite{BRR3} implies that \eqref{gdmodele3} admits a unique positive 
$x$-independent stationary solution $(V_s,U_s)$, concluding the proof of 
Theorem \ref{cvstat}.

In the case without mortality ($k=0$) we have the trivial solution 
$(V_s,U_s)\equiv(1,1/\mu)$ (recall that $\nu=1$). In general, $V_s=V_s(y)$ and we know that $V_s(+\infty)=1$.

The issue is now to track the invasion front, and this is done in the next two
theorems.  

\begin{theorem}\label{mainthm2D} {\em (Propagation on the road).}
Let $(v,u)$ be the solution of \eqref{gdmodele3} starting from  a   nonnegative,
compactly supported initial condition $(v_0,u_0)\not\equiv(0,0)$. Then, setting
$$\gamma_{\star}:=\frac{f'(0)}{1+2\a},$$
\begin{enumerate}
\item  $\forall \gamma>\gamma_{\star},\quad  \dis\lim_{t\to+\infty} \sup_{\abs 
x 
\geq e^{\gamma t}} u(x,t) =0$,
\item $\forall \gamma<\gamma_{\star},\quad  
\dis\lim_{t\to+\infty} \sup_{\abs x 
\leq e^{\gamma t}} \vert u(x,t) -U_s\vert=0.$
\end{enumerate}
\end{theorem}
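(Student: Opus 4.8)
My plan is to combine the comparison principle for \eqref{gdmodele3} — the linearised system is cooperative and $(-\partial_{xx})^\a$ satisfies the maximum principle — with the fat tail of the one-dimensional fractional heat kernel $p_\a$ on the road, $p_\a(t,x)\asymp t\big(t^{1/(2\a)}+\abs x\big)^{-(1+2\a)}$. The mechanism to exploit is that mass is created in the field at the exponential rate $f'(0)$ (the bottom of the spectrum of the Laplacian in the half-plane being $0$), is handed over to the road through the exchange condition, and is then transported along the road with the algebraic tail $\abs x^{-(1+2\a)}$; the balance $e^{f'(0)t}\abs x^{-(1+2\a)}\sim1$ gives $\abs x\sim e^{\gamma_\star t}$.

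\textbf{Upper bound (assertion (1)).} First I would use concavity and $f(0)=0$ to get $f(v)\leq f'(0)v$, so $(v,u)$ is a subsolution of the linear system $\mathcal S_0$ obtained from \eqref{gdmodele3} by replacing $f(v)$ with $f'(0)v$; hence $u\leq\o u$, where $(\o v,\o u)$ solves $\mathcal S_0$ from the same data, and it suffices to estimate $\mathcal S_0$. Solving the field equation $\partial_t\o v-\Delta\o v=f'(0)\o v$ for $y>0$ in terms of $\o u$ via the exchange condition and substituting into the road equation reduces $\mathcal S_0$ to a single non-local equation on $\R$ for $\o u$, driven by a nonnegative operator; a Gronwall/maximum-principle estimate on $\sup_x\o u$ and $\sup_{x,y}\o v$ gives $\norm{\o u(\cdot,t)}_\infty\leq Ce^{f'(0)t}$, the only source term being the field reaction $f'(0)\o v$. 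Writing $\o u$ by Duhamel against the fractional heat semigroup and bootstrapping the spatial decay through the field--road coupling, I expect to obtain, for every $\eps_0>0$,
\[
\o u(x,t)\ \leq\ C_{\eps_0}\,e^{(f'(0)+\eps_0)t}\,\big(1+\abs x\big)^{-(1+2\a)}.
\]
Since $\gamma>\gamma_\star$ means $\gamma(1+2\a)>f'(0)$, choosing $\eps_0<\gamma(1+2\a)-f'(0)$ gives $\sup_{\abs x\geq e^{\gamma t}}u(x,t)\to0$, which is assertion (1).

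\textbf{Lower bound (assertion (2)).} By Theorem \ref{cvstat} we already have $u(x,t)\to U_s$ locally uniformly, so the point is to spread this at exponential rate, which I would do in two steps. \emph{Step (i):} show $\dis\liminf_{t\to+\infty}\inf_{\abs x\leq e^{\gamma t}}u(x,t)>0$ for each $\gamma<\gamma_\star$. Concavity gives $f(v)\geq(f'(0)-\eta)v$ for $v$ small; fix $\eta$ with $f'(0)-\eta>\gamma(1+2\a)$. I would then construct a genuine subsolution of the full nonlinear system, with $\u u$ a small multiple of a smooth profile that decays in $\abs x$ like $(1+\abs x)^{-(1+2\a)}$, grows in $t$ like $e^{(f'(0)-\eta)t}$, and is capped at a small height $\eps_1$ (to stay where $f$ is superlinear from below), together with a matching $\u v$ decaying in $y$; the fat tail of $p_\a$ then forces this subsolution above a fixed $\delta>0$ on all of $\abs x\leq e^{\gamma t}$ for $t$ large, and comparison gives $u\geq\delta$ there. \emph{Step (ii):} upgrade to $U_s$. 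Fixing $\gamma<\gamma'<\gamma_\star$, Step (i) gives $u\geq\delta$ on $\abs x\leq e^{\gamma't}$ for $t$ large; given $t_n\to+\infty$ and $\abs{x_n}\leq e^{\gamma t_n}$, I would translate $(v,u)$ by $(x_n,t_n)$ and pass to the limit (interior parabolic estimates in the field, smoothing of the fractional semigroup, $x$-translation invariance of \eqref{gdmodele3}) to obtain an entire bounded solution $(\t v,\t u)$ with $\t u\geq\delta$ on all of $\R$ — the lower bound survives because $x_n$ sits strictly inside $\{u\geq\delta\}$ as $\gamma<\gamma'$. The sandwiching between $x$-independent sub/supersolutions and the attractivity behind Theorem \ref{cvstat} (restart $(\t v,\t u)$ at time $-m$, let $m\to+\infty$) force $(\t v,\t u)\equiv(V_s,U_s)$, hence $u(x_n,t_n)=\t u(0,0)\to U_s$; the sequence being arbitrary, this is assertion (2).

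\textbf{Main obstacle.} I expect the hard part to be the construction of sharp barriers for the \emph{coupled} system. The naive separated ansatz (exponential in $t$, algebraic in $x$, a profile in $y$) does not close: for a supersolution it would require a bounded positive $y$-profile $g$ with $g''\leq-cg$, $c>0$ fixed, which is impossible by a Sturm comparison, and for a subsolution the exchange condition and the road equation impose conflicting bounds on the amplitudes. One must therefore genuinely intertwine the standard-diffusion half-plane with the borderline decay $\abs x^{-(1+2\a)}$ of $(-\partial_{xx})^\a$, in such a way that the growth rate $f'(0)$ (dictated by the field) and the tail exponent merge into the single number $1+2\a$ of $\gamma_\star$; and in assertion (2) the subsolution must grow yet remain below the superlinearity threshold of $f$, the fractional differential inequality being most delicate near the cap, where $(-\partial_{xx})^\a$ of the truncated profile can be large. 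By contrast the Gronwall bound $\norm{\o u(\cdot,t)}_\infty\lesssim e^{f'(0)t}$, the Duhamel tail estimate, and the Liouville Step (ii) should be routine once Theorem \ref{cvstat} is in hand.
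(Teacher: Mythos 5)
Your overall architecture coincides with the paper's: linearize ($f(v)\leq f'(0)v$) and estimate the tail of the linearized road density for assertion (1); construct an explicit fat-tailed subsolution to get a positive lower bound on $\{\abs x\leq e^{\gamma t}\}$, then upgrade to convergence to $U_s$ by translation/compactness and the Liouville property behind Theorem \ref{cvstat} for assertion (2). Your Step (ii) is essentially verbatim the paper's Section 9. For the upper bound the paper does not use Duhamel at all: it computes the resolvent of the linearized operator by Fourier--Laplace transform and extracts the precise asymptotics $\o u\sim c\,e^{f'(0)t}t^{-3/2}\abs x^{-(1+2\a)}$ by a Polya-type contour deformation (Theorem \ref{t4.3}). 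Your coarser bound $C_{\eps_0}e^{(f'(0)+\eps_0)t}(1+\abs x)^{-(1+2\a)}$ would suffice for assertion (1), and a weighted Duhamel bootstrap in the norm $\sup_x(1+\abs x)^{1+2\a}\abs{\,\cdot\,}$ plausibly closes because both the Gaussian half-plane kernel and the fractional semigroup preserve that weight up to polynomial-in-$t$ losses absorbed by $e^{\eps_0 t}$; but as written this is a plan, not a proof, and it is the step the paper spends an entire section executing (by a different method).

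The genuine gap is in Step (i) of the lower bound. First, your own ``main obstacle'' paragraph asserts that a separated ansatz (exponential in $t$, algebraic in $x$, a $y$-profile) cannot close for the subsolution because ``the exchange condition and the road equation impose conflicting bounds on the amplitudes,'' and you leave the resolution open. In fact there is no conflict: the paper's subsolution \emph{is} separated, namely $\u v=\phi(xBe^{-\gamma t})\sin(\frac{\pi}{L}y+h)$, $\u u=C\phi(xBe^{-\gamma t})$, with $\phi$ a truncated $C^2$ profile decaying like $\abs\xi^{-(1+2\a)}$ (Lemma \ref{phi}). The tension is resolved by two explicit devices you do not supply: working on a strip of width $L$ large enough that the loss $(\pi/L)^2$ is absorbed into $g(s)=f(s)-(\pi/L)^2s$ while keeping $g'(0)>\gamma(1+2\a)$, and introducing the phase $h<\arctan(\pi/L)$ so that the exchange inequality $-\frac{\pi}{L}\cos(h)+\sin(h)\leq C\mu$ holds for \emph{every} $C>0$, after which the road inequality only requires $C$ small. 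Without this (or an equivalent) construction, assertion (2) is unproved. Second, even granting the subsolution, you cannot compare it with $(v,u)$ at $t=0$: the initial datum is compactly supported while $\u u$ has an algebraic tail. One must first show that at some positive time the solution dominates $a(1+\abs x^{1+2\a})^{-1}$ uniformly in a strip $0\leq y\leq Y$ — the paper's Lemma \ref{sousoldonneeinitiale}, proved by comparing with the killed fractional heat kernel and a Hopf-type argument at the boundary. This fitting step is absent from your proposal.
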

Thus, the process at work on the road is the same as that of 
an effective reaction-diffusion equation of the form:
\begin{equation}
\label{eqmodele}
\partial_t u+(-\partial_{xx})^\alpha u=f_{eff}(u),
\end{equation}
where $f_{eff}'(0)=f'(0)$. In particular, spreading will occur at the same rate 
as was computed in \cite{XCJMR} 
for this equation (see also \cite{XCJMR0}), despite the fact that there is no reproduction on the road. Thus, the road has once again a dramatic effect, and we have therefore identified a new 
mechanism for front acceleration.

Turn to the propagation in the field. Recall that
$c_K=2\sqrt{f'(0)}$ is the KPP velocity. 
\begin{theorem}\label{propagationfield2} {\em (Propagation in the field).}
Under the assumptions of Theorem \ref{mainthm2D}, for all $\theta \in (0, 
\pi)$, we have: 
\begin{enumerate}
\item $\forall \displaystyle c> 
{c_K}/{\sin\theta},\quad    
\dis\lim_{t\to+\infty} \sup_{  r \geq ct} v(r \cos\theta, r \sin\theta,t) 
=0$,
\item $\forall \displaystyle
c<{c_K}/{\sin\theta},\quad   
\dis\lim_{t\to+\infty} 
\sup_{0\leq r \leq ct}|v(r \cos\theta, r \sin\theta,t)-V_s(r 
\sin\theta)|=0$. 
\end{enumerate}
\end{theorem}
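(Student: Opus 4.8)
The plan is to reduce each assertion to a statement about $v$ in horizontal slabs, to prove the easy upper bound~(1) by an explicit $x$-independent exponential super-solution, and to obtain~(2) in two stages: a Liouville-type argument giving convergence to $V_s$ in a neighbourhood of the road over an \emph{exponentially} wide horizontal window, followed by a Fisher--KPP vertical-spreading argument using that window as a reservoir.

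\emph{Reduction and proof of (1).} Since $\theta\in(0,\pi)$, $\sin\theta>0$, and a ray point $(r\cos\theta,r\sin\theta)$ has height $y=r\sin\theta$ and $|x|=r|\cos\theta|\le r$. If $r\ge ct$ with $c>c_K/\sin\theta$ then $y\ge(c\sin\theta)t$ with $c\sin\theta>c_K$; hence (1) follows from
\[
(\mathrm{P}_1)\qquad \forall\,c_1>c_K:\qquad \lim_{t\to+\infty}\ \sup_{x\in\R,\ y\ge c_1t}v(x,y,t)=0.
\]
If $r\le ct$ with $c<c_K/\sin\theta$ then, writing $y=r\sin\theta$, we have $|x|\le ct$ and $0\le y\le(c\sin\theta)t$ with $c\sin\theta<c_K$; hence (2) follows from
\[
(\mathrm{P}_2)\qquad \forall\,C_1>0,\ \forall\,C_2\in(0,c_K):\qquad \lim_{t\to+\infty}\ \sup_{|x|\le C_1t,\ 0\le y\le C_2t}\big|v(x,y,t)-V_s(y)\big|=0.
\]
For $(\mathrm{P}_1)$, by concavity $f(v)\le f'(0)v$, so $v$ is a sub-solution of the problem obtained from the field equation and boundary condition in \eqref{gdmodele3} by replacing $f(v)$ with $f'(0)v$. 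Given $c_1>c_K$ pick $c'\in(c_K,c_1)$ and $\lambda=c'/2$, so $\lambda c'-\lambda^2=(c')^2/4>c_K^2/4=f'(0)$; then $\o v(y,t):=Ae^{\lambda(c't-y)}$ (independent of $x$) is a super-solution of that linear interior equation, while $-\partial_y\o v|_{y=0}=\lambda\,\o v|_{y=0}\ge\mu\|u\|_{L^\infty}\ge\mu u-\nu\,\o v|_{y=0}$ for $A$ large (recall $\|u\|_{L^\infty}<\infty$ by comparison and Theorem~\ref{cvstat}), and $\o v(\cdot,0)=Ae^{-\lambda y}\ge v_0$ for $A$ large since $v_0$ is bounded and compactly supported. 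The comparison principle gives $v\le\o v$, hence $\sup_{y\ge c_1t}v\le Ae^{\lambda(c'-c_1)t}\to0$, i.e.\ $(\mathrm{P}_1)$.

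\emph{Convergence to $V_s$ near the road.} The crux of $(\mathrm{P}_2)$ is:
\[
(\mathrm{N})\qquad \forall\,R>0,\ \forall\,\gamma\in(0,\gamma_\star):\qquad \lim_{t\to+\infty}\ \sup_{|x|\le e^{\gamma t},\ 0\le y\le R}\big|v(x,y,t)-V_s(y)\big|=0.
\]
I would prove $(\mathrm{N})$ by contradiction and compactness. If it fails there are $\delta_0>0$ and $(x_n,y_n,t_n)$ with $t_n\to+\infty$, $|x_n|\le e^{\gamma t_n}$, $y_n\to y_\infty\in[0,R]$, and $|v(x_n,y_n,t_n)-V_s(y_n)|\ge\delta_0$. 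Translate: $v_n(x,y,t):=v(x+x_n,y,t+t_n)$, $u_n(x,t):=u(x+x_n,t+t_n)$ solve \eqref{gdmodele3}. Fixing $\gamma'\in(\gamma,\gamma_\star)$ and using Theorem~\ref{mainthm2D}(2) with exponent $\gamma'$ (note $|x+x_n|\le|x|+e^{\gamma t_n}\le e^{\gamma'(t_n+t)}$ on compacts once $t_n$ is large), one gets $u_n\to U_s$ uniformly on compact subsets of $\R\times\R$. By interior and boundary parabolic estimates — the boundary condition being linear in $v$ with uniformly bounded and (by fractional parabolic regularity for $u$) uniformly H\"older datum $\mu u_n$ — a subsequence of $v_n$ converges in $C^{2,1}_{\mathrm{loc}}$ to a bounded, nonnegative, \emph{eternal} (defined for all $t\in\R$) solution $v_\infty$ of the field equation and boundary condition in \eqref{gdmodele3} paired with $u\equiv U_s$. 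Inserting $u\equiv U_s$ into the road equation forces $v_\infty|_{y=0}\equiv(\mu+k)U_s/\nu=V_s(0)$, and $v_\infty>0$ everywhere (otherwise, by the strong maximum principle, $v_\infty\equiv0$, incompatible with the positive boundary value). Thus $v_\infty$ is a bounded positive eternal solution of the Fisher--KPP equation in the half-plane with the constant Dirichlet condition $v_\infty|_{y=0}=V_s(0)$; a Liouville-type theorem for such solutions, in the spirit of \cite{BRR3,BRR}, gives $v_\infty\equiv V_s$, so $v(x_n,y_n,t_n)=v_n(0,y_n,0)\to v_\infty(0,y_\infty,0)=V_s(y_\infty)=\lim V_s(y_n)$, contradicting the choice of $(x_n,y_n,t_n)$.

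\emph{Vertical spreading and conclusion.} Fix $C_1>0$, $C_2\in(0,c_K)$, $\eta>0$; choose $R=R(\eta)$ with $V_s\ge1-\eta$ on $[R,+\infty)$, and fix $\gamma\in(0,\gamma_\star)$, $c\in(C_2,c_K)$. If $|x|\le C_1t$, $0\le y\le R$, then $C_1t\le e^{\gamma t}$ for $t$ large, so $(\mathrm{N})$ gives $|v(x,y,t)-V_s(y)|\le\eta$. For $(x_*,y_*)$ with $|x_*|\le C_1t_*$, $R\le y_*\le C_2t_*$, set $t_1:=\tfrac2\gamma\log t_*$ and $\t v(x,y,t):=v(x+x_*,y,t+t_1)$; by $(\mathrm{N})$ at time $t_1$ (where $e^{\gamma t_1}=t_*^2\gg|x_*|$), $\t v(\cdot,\cdot,0)\ge1-2\eta$ on $\{|x|\le t_*^2/2,\ R\le y\le R+1\}$, a window vastly wider than any distance lateral effects can reach in time $t_*$. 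Comparing $\t v$ from below, away from that window's lateral ends, with a compactly-supported-in-$x$ sub-solution built from the one-dimensional Fisher--KPP dynamics in $y$ (with the \eqref{gdmodele3}-type boundary condition at $y=0$) — a by-now classical construction, cf.\ \cite{AW2,AW,BRR4}, exploiting $f(\sigma s)\ge\sigma f(s)$ for $\sigma\in[0,1]$ and that the reservoir is exponentially wide whereas only a linearly growing horizontal extent is needed — one obtains $\t v(0,y_*,t_*-t_1)\ge1-2\eta$ once $y_*\le c(t_*-t_1)$; since $c>C_2$ and $t_1=O(\log t_*)$, this holds for $t_*$ large, i.e.\ $v(x_*,y_*,t_*)\ge1-2\eta$. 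As $V_s(y_*)\in[1-\eta,1]$ and $v(x_*,y_*,t_*)\le1+\eta$ for $t_*$ large (by the standard a priori bound $\limsup_t\sup_\O v\le1$), we get $|v(x_*,y_*,t_*)-V_s(y_*)|\le2\eta$. Together with the first range this shows the supremum in $(\mathrm{P}_2)$ is $\le2\eta$ for $t$ large; letting $\eta\downarrow0$ yields $(\mathrm{P}_2)$, hence~(2).

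\emph{Main obstacle.} The substantive difficulty is the Liouville classification behind $(\mathrm{N})$: on the full plane travelling waves are nonconstant bounded entire solutions of Fisher--KPP, so the identification $v_\infty\equiv V_s$ genuinely rests on the constant boundary datum $V_s(0)>0$ pinning the profile, and rigorous justification requires the tools of \cite{BRR3,BRR}. The remaining ingredients are comparison-principle routine: the a priori bounds ($\|u\|_{L^\infty},\|v\|_{L^\infty}<\infty$ and $\limsup_t\sup_\O v\le1$) are as in \cite{BRR2,BRR3}, and the vertical-invasion sub-solution is the classical one, the only care being to begin the comparison at the time $t_1=O(\log t_*)$ at which the relevant horizontal portion of the road is already filled — here the exponential filling rate of Theorem~\ref{mainthm2D} is precisely what makes a linearly wide window available essentially from the start.
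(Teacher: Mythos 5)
Your overall architecture (part~1 by an explicit supersolution; part~2 by first pinning $v$ to $V_s$ on exponentially wide low slabs via compactness and a Liouville argument, then spreading vertically at speed $c_K$) is sound and close in spirit to the paper's, which likewise combines the exponential lower bound on the road, a vertical Dirichlet-spreading lemma, and a classification of entire solutions obtained as limits of translates. Your part~1 is correct, and in fact more elementary than the paper's (a pure exponential $Ae^{\lambda(c't-y)}$ with the Robin inequality absorbed by $\mu\|u\|_\infty$, versus the paper's $1$D KPP supersolution compared through the Dirichlet trace $v(x,0,t)\le\|v\|_\infty\le\o v(0,t_0+t)$). Your statement $(\mathrm N)$ is also essentially the content of the paper's proof of Theorem \ref{mainthm2D} part 2 restricted to a slab, although you should be aware that the Liouville step you defer to ``the spirit of \cite{BRR3}'' is carried out in the paper not for the half-plane Dirichlet problem but for the full coupled system, by squeezing the eternal limit between the solutions issued from $(0,m)$ and from the sup, both of which converge to $(V_s,U_s)$ by Theorem \ref{cvstat}; your decoupled variant (deriving $v_\infty|_{y=0}\equiv V_s(0)$ from the road equation and then classifying eternal solutions of the Dirichlet problem) is workable but needs the analogous monotone squeezing plus the ODE phase-plane uniqueness, none of which is a mere citation.

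The genuine gap is in the vertical-spreading step. You claim that comparison with ``a compactly-supported-in-$x$ sub-solution built from the one-dimensional Fisher--KPP dynamics'' yields $\t v(0,y_*,t_*-t_1)\ge 1-2\eta$ for all $y_*$ up to $c(t_*-t_1)$. The classical constructions you cite (normalized principal eigenfunctions of $-\Delta-c\partial_y$ in a ball, translated at speed $c<c_K$) only deliver a \emph{positive} lower bound, uniform over the expanding range, not a bound close to $1$: this is exactly the content of the paper's Lemma \ref{lem:D}, whose estimate \eqref{inf>0} over the intermediate heights $\rho\le y\le cs$ is only $\inf>0$, the convergence to $1$ being obtained solely in the moving frame $y\approx cs$ and only locally uniformly. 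Upgrading ``$\inf>0$ on the expanding set'' to ``$\to V_s$ (i.e.\ $\to1$ for $y_*\to\infty$) uniformly on the expanding set'' is not a by-product of the subsolution; the paper supplies it by a second compactness argument: for $r_\tau\to+\infty$ the translates of $v$ converge to a bounded entire solution of $\partial_t\t v-\Delta\t v=f(\t v)$ on all of $\R^2$ with $\inf\t v>0$ (this is where Proposition \ref{intermediate_field}, applied with a slightly larger $c$ and nearby angles, is used), which forces $\t v\equiv1$ by comparison with the ODE $V'=f(V)$. Your proof needs this extra step (or an equivalent one); as written, the assertion $\t v(0,y_*,t_*-t_1)\ge1-2\eta$ does not follow from the tools you invoke. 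The fix is cheap given what you already have -- replace ``$\ge1-2\eta$'' by ``$\ge\delta>0$'' in the spreading step and then run the same translation/compactness/Liouville argument you used for $(\mathrm N)$ on the sequence $(r_\tau\cos\theta,r_\tau\sin\theta,\tau)$ with $r_\tau\to\infty$ -- but it must be done.
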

The speed of propagation in the direction $(\cos(\theta),\sin(\theta))$ is thus 
asymptotically equal to $c_K/\sin(\theta)$. When 
$\theta$ is close to $0$, this speed tends to infinity, which is consistent with 
 Theorem \ref{mainthm2D}. And so, the front is, asymptotically, close to a  
straight line parallel to the $x$-axis moving vertically at velocity $c_K$.
 One may interpret it as follows: 
the invasion on the road is so fast that the whole system behaves just as if the 
density in the field only saw the condition $v\equiv 1$ at the boundary, as in the effective  equation
$
v_t-\Delta v=f(v),$ with $v(x,0,t)\equiv 1.
$ 
It is easy to see that this gives the correct behavior, by trapping $v(x,y,t)$ between two suitable translates of the solutions of
$$
\partial_tv-\partial_{yy}v=f(v)\ \hbox{for $y\in\RR$,}\  \   \   \   \   v(y,0)=(1\pm\e)H(y),
$$
for every small arbitrary $\e$.  The whole scenario is summarized by the  numerical simulation of Figure \ref{fracthese}, once again reported from \cite{ACC_these}:
\begin{figure}[here]
\begin{center}
\includegraphics[width=14cm]{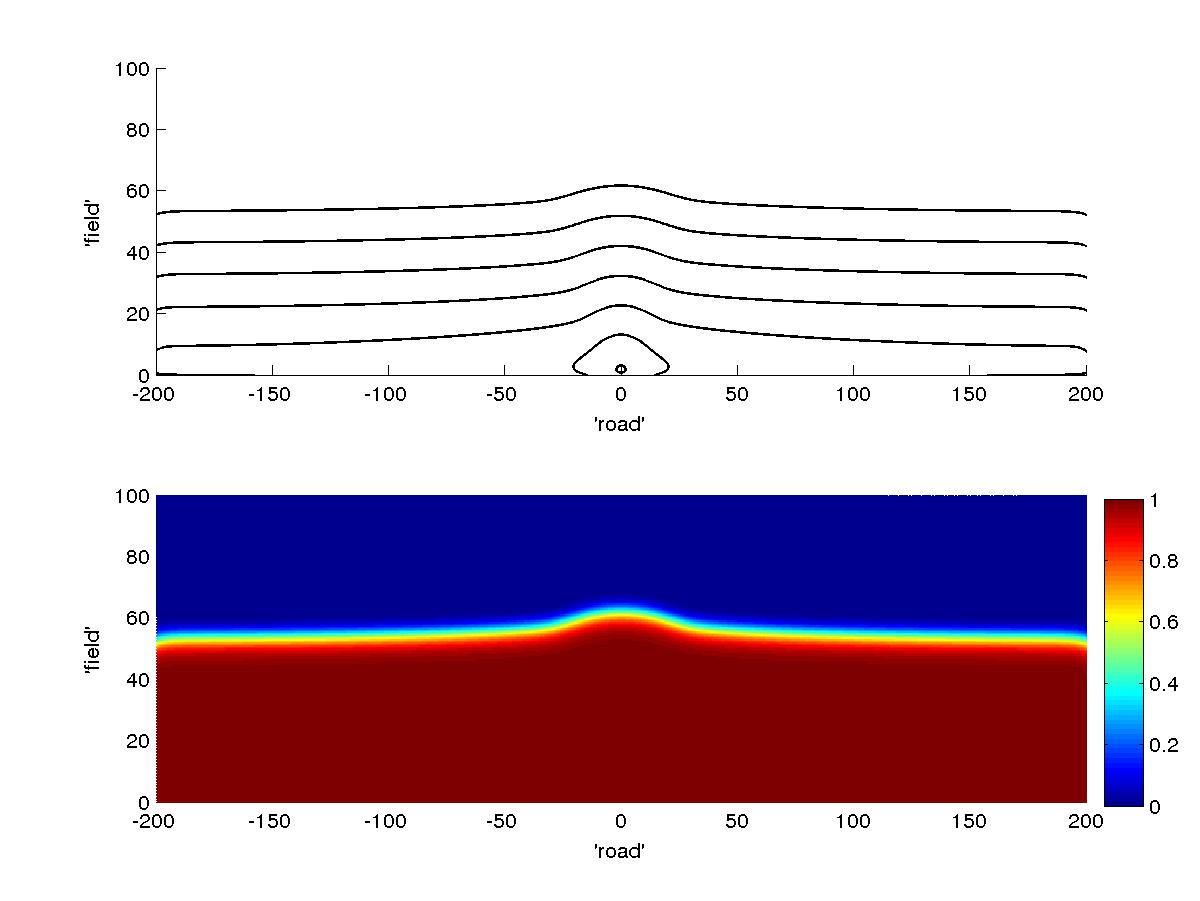}
\end{center}
\caption{Level sets of $v$}
\label{fracthese}
\end{figure}

\subsection{Organization of the paper}
 First, we give a brief overview of the context of the problem we study here 
 in Section \ref{sec:review}. There  we present  a review of the existing literature related to our model: propagation enhancement, propagation acceleration, as well as  
further results obtained on the model \eqref{Cauchy}.
The proof of the exponential in time asymptotic spreading in our model 
 starts in Section 4, where we outline the strategy of the proof of Theorem \ref{mainthm2D}. 
The idea is to trap the solution between a supersolution and a subsolution whose 
level sets move asymptotically at the same speed. 
This turns out to be a rather delicate task, the construction of the supersolution being 
quite different from that of the sub-solution. The former is achieved in 
Section~\ref{sec:super}, together with the main computations.
Section \ref{sec:auxiliary}  is devoted to the construction of an 
auxiliary 
subsolution for a 1D transport equation. This is the building block used in 
the rather computational Section \ref{sec:road} to obtain the subsolution 
to the full system. Some weaker versions of the second statement of 
Theorems \ref{mainthm2D} and \ref{propagationfield2} - with the convergence to 
$U_s$ and $V_s$ replaced by a positive lower bound - are derived in Sections
\ref{sec:fitting} and \ref{longtimebehavior}
respectively. These bounds are used in Section \ref{sec:stronger}
to complete the proofs of Theorems \ref{mainthm2D} and \ref{propagationfield2}.  
Finally, an appendix studies the Cauchy problem for  \eqref{gdmodele3} and provides
regularity and a comparison principle, similar to that of \cite{BRR2}. 

\section{A review of front speed-up and acceleration}\label{sec:review}
In this section, we present a general overview on front propagation enhancement, 
which has seen an important development in the last 15 years. 
We start with the case of a single local equation.
In the second subsection, we present further results obtained on \eqref{Cauchy}. In the
last one, we give an overview of the
mathematical literature on accelerating fronts. 
\subsection{Front propagation and speed-up: an overview}
It is well known that diffusion, when coupled with reaction, gives rise to 
propagating fronts. The most common situation is the development of fronts 
travelling at constant speed. 
The basic result concerns front spreading in a homogeneous medium. Let us recall its main features. The equation reads
\begin{equation}
\label{e2.3}
 u_t-\Delta u =f(u), \  \   \   u\big\vert_{t=0}\geq0,\not\equiv0,
\hbox{ compactly supported.}
\end{equation}
We are looking for a function $R:\R_+\to\R_+$ such that
\begin{equation}\label{R(t)}
\forall \varepsilon>0,\qquad
\lim_{t\to+\infty}\inf_{|x| < R((1-\varepsilon)t)}u(x,t)>0,\  
\hbox{ and }\  \  \lim_{t\to+\infty}\sup_{|x|>R((1+\varepsilon)t)}u(x,t)<1.
\end{equation}
\begin{figure}[here]
\begin{center}
\includegraphics[width=5cm]{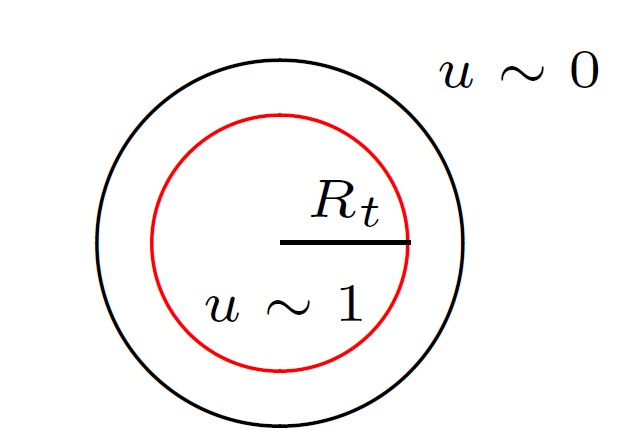}
\end{center}
\caption{Transition between 0 and 1}
\end{figure}

\noindent A fundamental  result in this direction is that of Aronson-Weinberger \cite{AW}, 
which accounts for how a front develops from a compactly supported datum.
\begin{theorem}[\cite{AW}]\label {t2.0}   Let $u(x,t)$ be the solution of 
\eqref{e2.3}. 
Then, still setting $c_K:=2\sqrt{f'(0)}$, 
\begin{enumerate}
\item  $\forall c>c_K,\quad\di\lim_{t\to+\infty}\sup_{\vert
x\vert\geq
ct}u(x,t)=0,$
\item $\forall c<c_K,\quad\di\lim_{t\to+\infty}\inf_{\vert x\vert\leq
ct}u(x,t)=1$.
\end{enumerate}
\end{theorem}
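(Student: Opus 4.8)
The plan is to prove both assertions by the comparison principle, treating the outer estimate (item~1, nothing invades faster than $c_K$) and the inner estimate (item~2, everything below $c_K$ is invaded) separately; the second is where all the work is.

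\emph{Item 1 (linearisation).} Since $f(s)\leq f'(0)s$, the function $\o u(x,t):=e^{f'(0)t}\,(\Gamma_t * u_0)(x)$, with $\Gamma_t$ the Gaussian heat kernel, is nonnegative and satisfies $\partial_t\o u-\Delta\o u=f'(0)\o u\geq f(\o u)$, hence is a supersolution of \eqref{e2.3} and $0\leq u\leq\o u$. If $u_0$ is bounded by $M$ and supported in $B_{R_0}$, then for $\abs x\geq ct$ and $t$ large one has $\abs{x-y}\geq ct-R_0$ on $\mathrm{supp}\,u_0$, so $\o u(x,t)\leq CM\,t^{-N/2}\exp\!\big((f'(0)-\tfrac{c^2}{4}+o(1))t\big)$. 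For $c>c_K=2\sqrt{f'(0)}$ this exponent is eventually negative, whence $\sup_{\abs x\geq ct}u(x,t)\to 0$.

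\emph{Item 2.} I would split it in two steps. \emph{Step A (hair--trigger).} Using the principal Dirichlet eigenpair $(\lambda_1(B_R),\varphi_R)$ of $-\Delta$ on $B_R$, the facts that $\lambda_1(B_R)\to 0$ as $R\to\infty$ and that $\kappa(\theta):=\inf_{(0,\theta]}f(s)/s>0$ for $\theta\in(0,1)$ (this uses only the KPP structure, not concavity), one builds for $R$ large a compactly supported stationary subsolution $\psi_R=\delta\,\varphi_R/\norm{\varphi_R}_\infty$ of \eqref{e2.3}, with $\delta$ as small as needed. As $u(\cdot,1)>0$ on $\R^N$ by the strong maximum principle, a suitable translate of $\psi_R$ lies below $u(\cdot,1)$; the comparison principle and the time--monotonicity of the flow started from a subsolution then keep $u(\cdot,t)$ above the solution issued from $\psi_R$, which increases to a bounded positive stationary solution of $-\Delta w=f(w)$ on $\R^N$, necessarily $\equiv 1$ by the classical Liouville/threshold lemma for KPP (as in \cite{AW}). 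Hence $u(\cdot,t)\to 1$ locally uniformly. \emph{Step B (invasion speed).} Fix $c<c_K$, pick $\omega_1,\omega_\perp>0$ small with $\omega_1^2+(N-1)\omega_\perp^2+\tfrac{c^2}{4}<f'(0)$ --- possible \emph{exactly because} $c<c_K$ --- and an orthonormal frame $(e,e_2,\dots,e_N)$, and look for a subsolution of the form
$$
\underline u(x,t)=\e\Big[e^{-\frac{c}{2}(x\cdot e-ct)}\cos\big(\omega_1(x\cdot e-ct)\big)\Big]_+\prod_{j=2}^{N}\big[\cos(\omega_\perp\,x\cdot e_j)\big]_+ .
$$
Conjugation by $e^{-\frac{c}{2}(x\cdot e-ct)}$ turns the subsolution inequality for the linearised equation $\partial_t-\Delta=f'(0)$ into precisely the algebraic condition above, while the smallness of $\e$ absorbs the gap between $f$ and its linearisation near $0$; thus $\underline u$ is a compactly supported (generalised) subsolution moving at speed $c$ in the direction $e$. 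By Step~A the solution dominates such a bump centred at the origin at some time $T$, and the comparison principle transports it, giving $u(x,t)\geq\e$ whenever $\abs x=c(t-T)$. Running over all directions $e$, and then re--applying Step~A from the positive values so produced --- now with base points filling the ball $\abs x\leq ct$ --- upgrades this to $u(\cdot,t)\to 1$ uniformly on $\abs x\leq c't$ for every $c'<c$. Since $c<c_K$ is arbitrary and item~1 gives the matching outer bound, the exact speed is $c_K$.

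\emph{Main obstacle.} Everything delicate sits in item~2. Writing the moving subsolution is immediate once one notices the $e^{-c\xi/2}$ conjugation, and the fact that it exists precisely for $c<c_K$ is exactly the "$c$ below the minimal wave speed / linear speed'' phenomenon. The real work is the bookkeeping: one must (i) fill the whole expanding ball $\abs x\leq c't$, not merely a neighbourhood of the moving front, and (ii) promote the small positive lower bound to the value $1$, \emph{uniformly} in that ball. Both are handled by iterating Step~A at translated base points, but making the constants ($R$, $\delta$, and the convergence rate of the solution issued from $\psi_R$) independent of the base point requires parabolic interior estimates and a translation--compactness argument, and one has to choose $\omega_1,\omega_\perp$ small enough that the bump, at the instant it sweeps over a given point, already dominates a full copy of $\psi_R$ centred there. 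The other point that needs care --- but is classical --- is the Liouville step in Step~A, which must be done without assuming $f$ concave, using only $f>0$ on $(0,1)$, $f(s)\leq f'(0)s$ and $\kappa(\theta)>0$.
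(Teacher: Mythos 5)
This theorem is not proved in the paper: it is recalled as a classical result and attributed to Aronson--Weinberger \cite{AW}, so there is no internal proof to compare against. Judged on its own, your argument is the standard (and correct) modern proof of the spreading theorem, and its two halves mirror techniques the paper does use elsewhere: the linearisation/Gaussian-kernel upper bound is exactly the mechanism behind the supersolution of Section 5, and your Step A (principal Dirichlet eigenfunction on a large ball as a compactly supported stationary subsolution, monotone convergence to a stationary state, Liouville via comparison with the ODE $\dot V=f(V)$) is precisely the scheme of the paper's proof of Lemma 7.2 for the Dirichlet half-plane problem. Your computation for the travelling bump is right: conjugating by $e^{-c\xi/2}$ gives $\partial_t\underline u-\Delta\underline u=\bigl(\tfrac{c^2}{4}+\omega_1^2+(N-1)\omega_\perp^2\bigr)\underline u$ on the set where it is positive, so the construction works exactly when $c<c_K$, and the truncation by positive parts yields a generalised subsolution since the function vanishes on the boundary of its support. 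The remaining steps you flag as bookkeeping are genuinely only bookkeeping: translation invariance makes the convergence rate of the flow issued from $\psi_R(\cdot-x_0)$ automatically independent of $x_0$, which gives the uniformity on $|x|\leq c't$ once each such point has been swept by the bump at a time $t_0\leq T+c't/c$, leaving a diverging relaxation time $t-t_0$. So the proposal is sound; the only caveat is that the Liouville step and the uniform-filling step are stated rather than carried out, but both are classical and correctly identified.
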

In other words, \eqref{R(t)} holds with $R(t)=c_Kt$. 
An important literature has developed from there;  to discuss it is out of the 
scope of this paper. Let us just mention the forthcoming book \cite{BHbook},  
which provides a complete overview of the question.

 In the presence of heterogeneities, quantifying propagation can be quite difficult. The pioneering work in this field goes back to the probabilistic arguments of Freidlin and G\"artner \cite{FreGa}. They studied KPP-type propagation in a periodic environment and showed that the speed of propagation is no longer isotropic: propagation in any direction is influenced by all the other directions in the environment. They provide an explicit formula for the computation of the propagation speed. Many works have followed since then, we mention for instance the references \cite{BHN2} for a definition and estimates of the spreading speed in periodic environment, \cite{BNa} for the particular case of space dimension 1,  and \cite{BHN1}
for a general definition in arbitrary heterogeneous media, as well as new estimates of the heat kernel.

Reaction-diffusion equations in heterogeneous media since then is an active field and the question how the environment may enhance propagation has received much attention. The first paper in the domain is that of Audoly, Berestycki and Pomeau \cite{ABP},  which studies models of the form
\begin{equation}
\label{eqhz}
\partial_t v + A\nabla \cdot (vq(x)) = \Delta v + f(v), \qquad t\in \mathbb R, (x,y)\in\mathbb R\times\mathbb R^{N-1}
\end{equation}
where $A > 1$ is large and $q$ is an imposed divergence free flow field. 
They propose, by a formal analysis, various asymptotics for the propagation 
velocity, in particular if $q(x)$ is a shear flow $q(x)=(\alpha(y),0)$ or a 
periodic cellular flow field. This has triggered a series of mathematical works; 
let us first mention a general deep estimate by
Constantin, Kiselev, Oberman  and Ryzhik \cite{CKR} of the {\em bulk burning rate} of \eqref{eqhz},  defined as:
$$
V(t)=\int_{\R^N}f(v(t,x))dx.
$$
This quantity  turns out to be a relevant one, especially in problems where the flow is time-dependent or strongly heterogeneous - thus precluding the reduction of the problem to travelling or pulsating waves.  The study of speed-up properties of propagation by an advecting velocity field  is continued, in a mathematically rigorous fashion, in \cite{KR}. This work derives upper and lower bounds for \eqref{eqhz} in terms of $A$: $V(t)\leq CA$ for shear flows, and $V(t)\geq CA^{1/5}$ for cellular flows. When 
the flow is a steady shear flow, a recent paper of Hamel and Zlatos \cite{HZ} 
makes the Kiselev-Ryzhik upper bound sharp:  under a H\"ormander type condition 
on $\alpha$, there exists $\gamma^*(\alpha, f) \geq \di\int_{\mathbb T^{N-1}} 
\alpha(y) dy$ such that the velocity $c^*(A\alpha,f)$ of travelling fronts of 
\eqref{eqhz} satisfies
$\di\lim_{A\to+\infty} \frac{c(A\alpha,f)}{A} = \gamma^*(\alpha,f)$. Moreover
the quantity $\gamma^*$ is the unique admissible velocity for the following degenerate system with unknowns $\gamma$ and $U$:
\begin{equation}
\begin{cases}
\Delta_y U + (\gamma - \alpha(y)) \partial_x U + f(U) = 0\ (\mathbb R\times \mathbb T^{N-1}) \\
\di\lim_{x\to+\infty} U(x,y) \equiv 0,  \\
\di\ \lim_{x\to-\infty} U(x,y) \equiv 1.
\end{cases}
\end{equation}
A similar situation will be discussed in the next section.

For reaction terms $f$ which are of the Fisher-KPP type, there is a relation between the principal eigenvalue and the amplitude of the velocity field, which has motivated the paper by
Berestycki, Hamel and Nadirashvili \cite{BHN1} on principal eigenvalue problems of the form
$$
-\Delta \phi+Aq(x).\nabla\phi=\mu(A)\phi,
$$
$\mu(A)$ being the  eigenvalue that is sought. The link between the existence or nonexistence of a first integral for the flow, and the size of $\mu(A)$, is elucidated.

\subsection{Speed-up by the line of fast diffusion: recent results}
The qualitative properties of model \eqref{Cauchy} have been studied further, with the discovery of new effects. In what follows, we list some natural questions and the answers that have been given.
\subsubsection{Is the bound $c_*(D)\sim c_\infty\sqrt D$ robust?}
Whether this  asymptotic persists if the nonlinear term $f$ is changed to a 
different type of source term is not obvious. Indeed,  {\it a priori} trivial 
question; this could indeed be thought of as a special property of models with 
Fisher-KPP type nonlinearities. Moreover, the asymptotic for the velocity is 
obtained through algebraic computations. So,  it is quite natural to ask whether 
this result persists with more general nonlinearities, where explicit 
computations are not longer possible.  And, indeed,  the property seems 
to be general, as shown by  Dietrich 
\cite{D1}, \cite{D2}. Consider the situation where $\O$ is the strip $\RR\times 
(0,L)$, with Neumann boundary condition at $y=L$. The function $f$ vanishes on 
an interval $[0,\theta]$, and is positive 
on $(\theta,1)$. Then    \eqref{Cauchy} admits a unique travelling wave velocity 
$c_*(D)$. 
Moreover, the velocity   still grows like $\sqrt D$: we have $c_*(D)\sim 
c_\infty\sqrt D$, where $c_\infty>0$ is the unique $c$ such that the 
following problem admits solution:
\begin{equation}
\label{TW}
\begin{cases}
c\partial_x \phi- \partial_{xx} \phi= \psi-\mu \phi, &
x\in\R,\ y=0\\
c\partial_x \psi-d\partial_{yy}\psi=f(\psi), & (x,y)\in\O\\
-d\partial_y\psi=\mu \phi-\psi, & x\in\R,\ y=0 \\
\partial_y\psi=0, & x\in\R,\ y=L \\
(\psi,\phi)(-\infty,y)=(0,0),\  (\psi,\phi)(+\infty,y)=(1,1/\mu), & y\in[0,L].
\end{cases}
\end{equation}
This is reminiscent of the Hamel-Zlatos situation, although the speed-up mechanism is quite different.

\subsubsection{How is the spreading velocity modified if additional effects are included?}
Another natural question is what happens if nonuniform transport effects are  
included. In the following model, we assume that on the road there is a constant 
transport $q$ as well as a constant mortality rate $k$. The conditions in 
the field remain unchanged. The equations for $u$ and $v$ thus read:
\begin{equation}
\label{Cauchy-rho}
\begin{cases}
\partial_t u-D \partial_{xx} u+q\partial_x u= \nu v-\mu u-ku&
x\in\R,\ y=0,\
t>0\\
\partial_t v-d\Delta v=f(v) & (x,y)\in\O,\ t>0\\
-d\partial_y v=\mu u-  \nu v & x\in\R,\ y=0,\ t>0.
\end{cases}
\end{equation}
 In \cite{BRR3}, it is proved that Problem \eqref{Cauchy-rho} admits 
asymptotic speeds of spreading $c_*^\pm$ in the directions $\pm e_1$.
Moreover, if $\di\frac{D}d\leq2+\frac k{f'(0)}\mp\frac q{\sqrt{df'(0)}}$,  
 then $c^\pm_*=c_K$, else $c_*^\pm>c_K$.

Let us give a brief biological interpretation of this result. For definiteness, let us consider 
propagation to the right (that is, in the
direction $e_1$).

First we see some expected effects:  mortality on the road makes speed-up more difficult by 
raising  the threshold for $D/d$ past which the effect of
the road is felt.  On the contrary,  a transport
$q>0$ on the road facilitates speed-up (to the right) by lowering the threshold.
When $q<0$, the threshold is raised by the same factor.
  
A less expected consequence is the following. In the
absence of mortality on the road, the threshold condition
reads
$$\di\frac{D}d>2-\di\frac q{\sqrt{df'(0)}}=2\left(1-\frac{q}{c_K}\right),\quad
 \hbox{i.e.,}\quad  q > c_K\left(1 - \frac{D}{2d}\right).
$$
In other words, a transport $q$ larger than
$c_K$  speeds up propagation, no matter what the
diffusivity ratio is.  Biological situations are, for instance, the 
spreading of a parasite by a river, as reported for instance in 
\cite{JB}.  For values of $q$ less than $c_K(1 -D/2d)$
- and, in particular, for large values of $-q$ - spreading towards right 
occurs at the
KPP velocity: in biological terms, propagation
upstream against a
river flow remains  unaffected, whereas  downstream
propagation, in the direction of the flow, can be strongly enhanced. This yields a remarkable assymetry
which, as a matter of fact, is also felt in the field: the asymptotic shape of the front may deviate significantly from 
that computed in the absence of flow field (c.f.~\cite{BRR4}).
\subsubsection{Nonlocal exchanges between the field and the road}
Up to now, we considered exchanges between the road and the field taking place in the infinitesimal vicinity of the road. It is therefore a natural question to examine the influence of the range of exchanges between the road and the field, in other words what happens when the exchanges are nonlocal.  This was recently investigated by   Pauthier,
\cite{P1}, \cite{P2} and \cite{P3}. The model under study is
\begin{equation}
\label{Cauchynonlocal}
\begin{cases}
\partial_t u-D \partial_{xx} u= \di\int_{\RR}\nu(y)v(x,y,t)dy-  u\displaystyle\int\mu(y)dy, &
x\in\R,\
t>0\\
\partial_t v-d\Delta v=f(v)-\nu(y) v+\mu(y) u, & (x,y)\in\RR^2,\ t>0\\
\end{cases}
\end{equation}
The functions $\mu$ and $\nu$ are smooth, $L^1$ functions: individuals can jump  away from the road, but only very few of them can go very far from it. Notice that the initial model \eqref{Cauchy} is retrieved from \eqref{Cauchynonlocal}, at least formally, by setting
$$
\mu(y):=\o\mu\delta_{y=0},\   \   \nu(y)=\o\nu\delta_{y=0}
$$
where $\o\mu$ and $\o\nu$ are two positive given constants.
The  question is how the spreading velocity is 
modified by the introduction of this non locality. Pauthier shows that the thresholds are quite stable, but also discovers surprising effects.
\begin{enumerate}
\item{\it Propagation enhancement.} The threshold $D=2d$ is still there \cite{P1}: if $D\leq 2d$, we have $c_*(D)=c_K$; if $D>2d$, then $c_*(D)>c_K$. Moreover, there is $c_\infty>0$ such that 
$c_*(D)\sim c_\infty\sqrt D$ as ${D\to+\infty}$. 
The dynamics when $\mu$ and $\nu$ are close to Dirac masses is also quite stable \cite{P2}: if $(\mu_\e,\nu_\e)_{\e>0}$ converge to $(\o\mu\delta_{y=0},\o\nu\delta_{y=0})$, then 
(i) $c_*^\e(D)\to c_*(D)$ as ${\e\to0}$, and (ii) the limits $t\to+\infty$ and $\e\to0$ commute.
\item{\it Variation of the spreading velocity with the range of $\mu$ and $\nu$.} How does $c_*(D)$ then vary when $\mu$ and $\nu$ vary, their masses being respectively kept equal to $\o\mu$ and $\o\nu$? (i) a new threshold (\cite{P3}). Take $(\mu(y),\nu(y))=\di\frac1R(\mu_0(\di\frac{y}R),\nu_0(\di\frac{y}R))$, the functions $\mu_0$ and $\nu_0$ having masses $\o\mu$ and $\o\nu$. 
Then, if $c_*(D,R)$ is the spreading speed, it holds that
$$
\lim_{R\to+\infty}c_*(D,R)>c_K\ \iff\ D>d\left(2+\frac{\o\mu}{f'(0)}\right).
$$
(ii) Moreover (see \cite{P1}), contrary to what intuition suggests, the spreading velocity is not always maximized - under the constraint that the masses of $\mu$ and $\nu$ are kept equal to the constants $\o\mu$ and $\o\nu$  - when exchanges are localized on the road.
\end{enumerate}
 \subsubsection{Other effects}
The speed-up effects observed in the original model \eqref{Cauchy} are   displayed in other various situations. 
Consider, for instance, a strip bounded by the road on one side and with Dirichlet boundary conditions
on the other. In this case, Tellini \cite{T1} proves the existence of an 
asymptotic speed of propagation which is greater than that of the case without road and studies 
its behavior in the limits $D\to0$ and $D\to+\infty$. 
When the width of the strip goes to infinity, 
the asymptotic speed of propagation approaches the one of the half-plane model \eqref{Cauchy}. 
 
Equations \eqref{Cauchy} have also an interest in higher dimensions, where they arise as models in medicine.
One motivation would be 
to model the diffusion of a drug within a body through the blood network. 
The analysis of an $N$-dimensional model is achieved by Rossi, 
Tellini and Valdinoci \cite{RTV}.
The authors consider a circular cylinder with fast diffusion 
at the boundary, which reduces to a strip between two 
parallel roads in the bidimensional case.
The picture obtained in \cite{RTV} is similar to the ones described before: 
enhancement of the spreading velocity occurs if and only if
the ratio between the diffusivities on the boundary and inside the 
cylinder is above a certain threshold.
The authors investigate the dependence of the spreading velocity with respect 
to the radius $R$ of the cylinder, discovering that  
it is monotone increasing if the ratio between the diffusivities
is below 2, whereas, if the ratio is larger than 2, the dependence is no longer 
monotone and there exists a critical radius $R=R_M$ maximizing the velocity. 

Let us end this review with the recent preprint \cite{GMZ} by Giletti, Monsaingeon and Zhou, 
which extends Theorem \ref{t1.2} to the situation where $\mu$ and $\nu$ are 
replaced by 1-periodic functions.

\subsection{Front acceleration: a review}

As opposed to the previous situation, non-local diffusion may cause acceleration of fronts. This  phenomenon  has also long been identified. In the context of ecology, Kot, Lewis and Van den Driessche \cite{kot} study, both numerically and heuristically, discrete time models of the form 
\begin{equation}
\label{e2.2}
N_{t+1}(x)=\int_{-\infty}^{+\infty}k(x-y)f(N_t(y))dy=(k*N_t)(x).
\end{equation}
 When the decay of the convolution kernel $k$ is slow enough, the authors observe accelerating profiles rather than travelling waves. 
Similar properties have been noticed, still from the numerical point of view or in the formal style, for models with continuous time, e.g. reaction-diffusion equations of the form:
\begin{equation}
\label{e2.1}
u_t+Lu=f(u),\ \ t>0,\ x\in\RR^N
\end{equation}
where $f$ is of the Fisher-KPP type, and $L$ a non-local diffusive operator. Typical examples are $Lu=(-\Delta)^\alpha$, or $Lu=k*u-u$ (notice that, with this last kernel, 
 \eqref{e2.2} is the exact analogue of \eqref{e2.1}). See \cite{vulpi} for a rather complete review. The basic heuristic argument for acceleration is the following: since $f$ is concave, a good approximation of the dynamics of \eqref{e2.1} is given by that of the linearized equation;  this entails studying the level sets of 
 $$
 v(x,t)=e^{f'(0)t}e^{-tL}u_0(x).
 $$
 In the case $L=(-\Delta)^\alpha$, and for a compactly supported datum $u_0(x)$, 
we have $e^{tL}\sim\di\frac{t}{\vert x\vert^{N+2\alpha}}$, yielding that the 
level sets of $v$ spread like $e^{f'(0)t/(N+2\alpha)}$. More generally, if 
$e^{-tL}$ decays spatially slower than any exponential, this yields accelerating 
level sets.
 
 Mathematically rigorous proofs of acceleration, and precise  
identifications of the mechanisms responsible for acceleration, are more recent. 
The first paper in this direction is that of Cabr\'e and the third author 
\cite{XCJMR0}  for $L=(-\Delta)^\alpha$, which proves 
that the level sets of \eqref{e2.1} spread asymptotically
 like $e^{f'(0)t/(N+2\alpha)}$.
 \begin{theorem}[\cite{XCJMR0}]
 \label{t2.01}  Let $u(x,t)$ be the solution of \eqref{e2.1} with 
$L=(-\Delta)^\alpha$, 
starting from a compactly supported initial datum $u_0\geq0,\not\equiv0$. Then 
we have: 
 \begin{enumerate}
 \item $\forall c>\di\frac{f'(0)}{N+2\alpha},\quad
\di\lim_{t\to+\infty}\sup_{\vert
x\vert\geq
e^{ct}}u(x,t)=0,$\\
\item $\forall c<\di\frac{f'(0)}{N+2\alpha},\quad
\di\lim_{t\to+\infty}\inf_{\vert x\vert\leq e^{ct}}u(x,t)=1$.
\end{enumerate}
\end{theorem}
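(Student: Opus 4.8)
\emph{Strategy and Statement 1.} The plan is to squeeze $u$ between a super- and a sub-solution built from the linearised equation $\partial_t w+(-\Delta)^\alpha w=f'(0)w$, whose solution is $e^{f'(0)t}e^{-t(-\Delta)^\alpha}u_0$, and to exploit the heavy tail of the heat kernel $p_\alpha(t,x)$ of $(-\Delta)^\alpha$, namely $p_\alpha(t,x)\asymp\min(t^{-N/(2\alpha)},\,t\,\abs{x}^{-N-2\alpha})$. Since $e^{ct}$ outgrows every power of $t$, on the relevant sets the convolution of $p_\alpha(t,\cdot)$ with a compactly supported datum behaves like $t\,\abs{x}^{-N-2\alpha}$, and multiplied by the reaction factor $e^{f'(0)t}$ this produces level sets moving like $e^{f'(0)t/(N+2\alpha)}$. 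Concretely, since $f(s)\le f'(0)s$, the function $v(\cdot,t):=e^{f'(0)t}e^{-t(-\Delta)^\alpha}u_0$ is a supersolution of \eqref{e2.1}, hence $u\le v$ by comparison. With $u_0$ bounded and supported in a ball $B_{R_0}$, for $\abs{x}\ge e^{ct}\ge 2R_0$ one has $\abs{x-y}\ge\abs{x}/2\gg t^{1/(2\alpha)}$ for $y\in B_{R_0}$ and $t$ large, so the tail bound and the radial monotonicity of $p_\alpha$ give
\[
v(x,t)\le C\,t\,(e^{ct}/2)^{-N-2\alpha}e^{f'(0)t}=C'\,t\,e^{(f'(0)-c(N+2\alpha))t},
\]
which tends to $0$, uniformly in such $x$, as soon as $c>f'(0)/(N+2\alpha)$. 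Hence $\sup_{\abs{x}\ge e^{ct}}u(x,t)\to0$, which is Statement~1.

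\emph{Statement 2: an expanding subsolution.} First, since $u_0\not\equiv0$, comparing $u$ with the solution issued from $\min(u_0,1)$ and using the strict positivity of $p_\alpha$ shows that $\delta_1:=\inf_{B_{R_0}}u(\cdot,1)>0$. Now fix $c<f'(0)/(N+2\alpha)$, pick $\eps>0$ with $c<(f'(0)-\eps)/(N+2\alpha)$, let $\delta\in(0,1]$ be such that $f(s)\ge(f'(0)-\eps)s$ on $[0,\delta]$ (using $f'(0)>0$ and the smoothness of $f$), and set $\delta_0:=\min(\delta_1,\delta)$. The core of the argument is to construct an expanding subsolution $\underline u$ of \eqref{e2.1} with $\underline u(\cdot,0)\le\delta_0\mathbf{1}_{B_{R_0}}$, obtained by saturating at level $\delta_0$ the linear solution $w(\cdot,t):=e^{(f'(0)-\eps)t}e^{-t(-\Delta)^\alpha}(\delta_0\mathbf{1}_{B_{R_0}})$: wherever $\underline u$ is below $\delta$ the linear regime applies, so the subsolution inequality only involves $w$ together with $f(s)\ge(f'(0)-\eps)s$. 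Granting this, comparison yields $u(\cdot,1+t)\ge\underline u(\cdot,t)$, while the lower tail bound on $p_\alpha$ gives $w(x,t)\to+\infty$ on $\{\abs{x}\le e^{\gamma t}\}$ for every $\gamma<(f'(0)-\eps)/(N+2\alpha)$, so that $\underline u(x,t)\ge\delta_0/2$ there for $t$ large. Choosing such a $\gamma>c$ we obtain $\liminf_{t\to+\infty}\inf_{\abs{x}\le e^{\gamma t}}u(x,t)\ge\delta_0/2>0$.

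\emph{Statement 2: upgrading to $1$.} To pass from this positive lower bound to $u\to1$ on $\{\abs{x}\le e^{ct}\}$ I would invoke the hair-trigger (local invasion) property of \eqref{e2.1}: for every $\eps'>0$ there is $\theta_{\eps'}>0$, independent of the centre by translation invariance, such that any nonnegative solution which is $\ge\delta_0/2$ on some unit ball $B(x_1,1)$ at time $\tau$ is $\ge1-\eps'$ at $x_1$ at time $\tau+\theta_{\eps'}$. Given $\abs{x}\le e^{ct}$ and $t$ large, the inequality $\gamma>c$ ensures that $B(x,1)\subset\{\abs{z}\le e^{\gamma(t-\theta_{\eps'})}\}$, a set on which the previous step gives $u(\cdot,t-\theta_{\eps'})\ge\delta_0/2$; hence $u(x,t)\ge1-\eps'$. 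Since $\eps'$ is arbitrary, and since $u(x,t)\le z(t)$ with $z'=f(z)$, $z(0)=\max(1,\norm{u_0}_\infty)$ and $z(t)\to1$ (the constant $z(t)$ being a supersolution), we conclude $\inf_{\abs{x}\le e^{ct}}u(x,t)\to1$, which is Statement~2.

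\emph{Main obstacle.} The difficulty is concentrated in the construction of the expanding subsolution. The naive choice $\min(w,\delta_0)$ fails: where the plateau at level $\delta_0$ meets the decaying tail of $w$ this function has a concave corner, and $(-\Delta)^\alpha$ applied to it equals $+\infty$ there, so it is not a (viscosity) subsolution, already for $\alpha\ge1/2$. One must therefore shape $\underline u$ carefully — for instance as a sufficiently smooth saturation of $w$, or as a moving compactly supported profile vanishing outside an expanding ball — so that it remains in the linear regime $\{\,\cdot\le\delta\,\}$ away from that ball yet never develops a concave corner, controlling the non-local operator throughout; getting the saturation level to match the rate $f'(0)/(N+2\alpha)$ (after letting $\eps\to0$) is precisely the delicate point. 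The remaining ingredients — the sharp two-sided estimate for $p_\alpha$ and the hair-trigger lemma for the fractional Fisher--KPP equation — should be recorded as separate preliminary results.
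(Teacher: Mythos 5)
Your treatment of Statement~1 is correct and is the standard argument: $e^{f'(0)t}e^{-t(-\Delta)^\alpha}u_0$ is a supersolution by the KPP inequality $f(s)\leq f'(0)s$, and the upper tail bound $p_\alpha(t,z)\leq Ct\abs{z}^{-N-2\alpha}$ kills it on $\{\abs x\geq e^{ct}\}$ precisely when $c>f'(0)/(N+2\alpha)$. (Note that the present paper does not prove Theorem~\ref{t2.01}: it quotes it from \cite{XCJMR0}, so the only available comparison is with the method of that reference and of \cite{XCACCJMR}, which the paper summarizes.)

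For Statement~2, however, there is a genuine gap, and you have located it yourself: the entire difficulty of the theorem is concentrated in producing the expanding subsolution, and your proof says ``Granting this'' at exactly that point. The naive saturation $\min(w,\delta_0)$ fails not only at the concave corner (where the singular part of the distributional second derivative has the wrong sign, so $(-\Delta)^\alpha$ blows up to $+\infty$) but also on the plateau near its edge, where $(-\Delta)^\alpha\underline u\geq0$ must be compared quantitatively with $f(\delta_0)$ and the nonlocal integral sees the decaying tail at distance $O(1)$; smoothing the corner does not by itself fix this. This is why \cite{XCJMR0} does not construct a single saturated profile: the lower bound there is obtained by an iteration scheme that propagates estimates of the form $u(\cdot,t_k)\geq a_k$ on $\abs x\leq r_k$ along a geometric sequence of times, using the quadratic lower bound $f(s)\geq f'(0)s-Cs^2$ and the maximum principle at each step; the later work \cite{XCACCJMR} replaces this by the explicit global pair $C^{\pm1}\bigl(1+e^{-\kappa t}\abs x^{N+2\alpha}\bigr)^{-1}$, whose verification as a subsolution is itself a nontrivial computation (it is the analogue of what Sections~\ref{sec:auxiliary}--\ref{sec:road} of this paper do for the road--field system). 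Either of these would have to be carried out in full for your proof to close. The remaining ingredients you defer --- the two-sided heat kernel estimate and the hair-trigger/local-invasion lemma for the fractional KPP equation --- are standard and citable, but the subsolution construction is the theorem's core and cannot be left as an acknowledged obstacle.
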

Here, \eqref{R(t)} holds with $R(t)=e^{f'(0)t/(N+2\alpha)}$. See \cite{XCJMR} 
when 
$e^{-tL}$ is a general Feller semigroup. A first question of interest is what 
happens as $\alpha\to 1$, or how  to reconcile Theorems \ref{t2.0} and 
\ref{t2.01}
in the limit $\alpha\to1$. The second and third author address this question in \cite{ACCJMR}: for $\alpha$ close to 
1, propagation at velocity $2\sqrt{f'(0)}$ occurs for a time of the order 
$\vert{\mathrm{Ln}(1-\alpha)}\vert$; from that time on, Theorem~\ref{t2.01} 
applies.

  When $L$ is of the convolution type, the relevant result is that of Garnier 
\cite{Jimmy}, who proves: (i) super-linear spreading for \eqref{e2.1} as soon 
as the convolution kernel $k$ decays more slowly than any exponential, (ii) 
exponential spreading when $k$ decays algebraically at infinity. The precise 
exponents are not, however, given. Accelerating fronts can be observed in 
\eqref{e2.1} even when $L=-\Delta$: Hamel and Roques prove, in \cite{HR},
that it is enough to replace the compactly 
supported initial datum with a slowly decaying one.  
This paper is also the first to identify, in an explicit way, that the correct 
dynamics of the level sets is given by that  of the level sets of the ODE 
$$\dot u=f(u),\    \    \   u(0,x)=u_0(x).
$$

Whether or not Theorem \ref{t2.01} is sharp is a natural question. One may 
indeed wonder whether the exponentials  should be corrected by sub-exponential 
factors.  Cabr\'e, and the second and third authors prove in  \cite{XCACCJMR} 
that the exponentials are indeed sharp, in other words that any level set is 
trapped in an annulus whose inner and outer radii are constant multiples of 
$e^{f'(0)t/(N+2\alpha)}$. For that, they devise  a new methodology which 
extends to the treatment of the models of the form 
\begin{equation}
\label{e2.6}
u_t+(-\Delta)^\alpha u=\mu(x)u-u^2,
\end{equation}
 with $\mu>0$ and 1-periodic. Surprisingly enough, the invasion property 
\eqref{R(t)} can still be 
described by the single function $R(t)=e^{\lambda_0t/(N+2\alpha)}$, where 
$(-\lambda_0)$ is the first periodic eigenvalue of $(-\Delta)^\alpha-\mu(x)$. 
The method consists
 in two steps: (i) one 
shows that $u(\cdot,1)$ decays at the same rate as the fractional heat
kernel, (ii) one constructs  a pair of sub and
supersolutions that have exactly the right growth for large times and the right
decay for large $x$. This proved to be a more precise approach than all the 
previous studies, which mainly relied on the analysis of the linear equation. 
This mechanism, which is quite different from what happens in the case 
$\alpha=1$, was later on described by M\'el\'eard and Mirrahimi \cite{MM}, with a
different viewpoint. Their work is in 
the spirit of the Evans-Souganidis  approach for front propagation \cite{ES}; to 
take into account the fact that the propagation is exponential, the authors 
modify the classical  scaling $(x,t)\mapsto (t/\varepsilon, x/\varepsilon)$ 
into $(x,t)\mapsto (t/\varepsilon, \vert x\vert^{1/\varepsilon})$; they apply  
the Hopf-Cole transformation to the new equation and derive a propagation law 
for the level sets.  

The analysis of \cite{XCACCJMR} can be pushed further,  to prove that in fact a strong symmetrization phenomenon is at work, see \cite{RT}. The result is the following: when $u_0$ is compactly supported, then the level sets of the solution $u(x,t)$ of \eqref{e2.1}, with $L=(-\Delta)^\alpha$, are asymptotically trapped in annuli of the form
$$
\{q[u_0]e^{f'(0)t/(N+2\alpha)}(1-Ce^{-\delta t})\leq\vert x\vert\leq 
q[u_0]e^{f'(0)t/(N+2\alpha)}(1+Ce^{-\delta t})\}.
$$ 
Here, $q[u_0]>0$, $C>0$ and $\delta\in(0,f'(0)/(N+2\alpha))$ are constants 
depending on $u_0$.

Let us end this review by mentioning a  different mechanism of  
acceleration in  kinetic equations. Here, an unbounded variable is responsible for acceleration of the overall propagation. A first model  is motivated by the mathematical 
description of the invasion of cane toads in Australia.  It has the form 
 
\begin{equation}
 \label{e2.4}
 \partial_tn-\alpha(\theta)\Delta_xn-D\Delta_\theta n=n(1-\rho_n),\  \  t>0,x\in\RR^N,\theta\in \Theta
  \end{equation}
where $n(t,x,\theta)$ is the density of individuals, and $\theta$ a genetic
trait. The quantity $\rho_n(x,t)$ is the integral of $n$ over $\Theta$. The 
coefficient $\alpha(\theta)>0$ may be unbounded, as well as the state space  
$\Theta$. This influences the dynamics of \eqref{e2.4}:  when  
$\Theta$ is unbounded and $\alpha(\theta)=\theta$,   the note \cite{BCMMPRV} 
gives a formal proof that the level sets of $n$ develop like $t^{3/2}$. This, by 
the way, can also be seen by computing the fundamental solution of the 
 of \eqref{e2.4}, linearized at $n\equiv 0$, and is related to the previous heuristics 
concerning the fractional Laplacian.  A work under completion by Berestycki, Mouhot and Raoul \cite{BMR},
gives a rigorous proof of the computations of  \cite{BCMMPRV}.

A  related  model is the BGK-like equation
 \begin{equation}
 \label{e2.5}
 \partial_tg+v\cdot\nabla_xg=(M(v)\rho_g-g)+\rho_g(M(v)-g),\  \  
t>0,x\in\RR,v\in V
 \end{equation}
analyzed by Bouin, Calvez and Nadin in  \cite{EBVCGN}. The underlying biological situation is that of a colony of bacteria. The unknown $g(t,x,v)$ is   the density of individuals, and $v$ a velocity parameter. The set $V$ may be unbounded, and this, as before, influences the dynamics of \eqref{e2.5}.  The 
function $M(v)$ is a reference velocity distribution. The quantity $\rho_g(x,t)$ is the integral of $g$ over $V$. The  Fisher-KPP equation \eqref{e2.3} arises as a limiting case of \eqref{e2.5} under the scaling $(t,x,v)\mapsto (t/\varepsilon^2,x/\varepsilon, \varepsilon^2v)$,  as $\varepsilon\to0$.
When $M(v)$ is a Gaussian distribution, a level set of a solution $g(t,x,v)$  
starting from an initial datum of the form   
$g(0,x,v)=M(v)\mathds{1}_{\{x<x_L\}}$ is trapped in an interval of the 
form$(c_1t^{3/2},c_2t^{3/2})$ where $c_i$ are universal constants. This result 
is obtained by the construction of a pair of sub/super solutions of a new type. 
The authors conjecture  that acceleration will always occur when $V$ is unbounded.


\section{Strategy of the proof of Theorem \ref{mainthm2D}  and  comments}
\subsection{The main lines of the proof}
A first idea would be to try to adapt a new, and quite flexible, argument devised by the second and third authors of the present paper, together with X. Cabr\'e,
in \cite{XCACCJMR}.  For the equation
$$
u_t+(-\Delta)^\alpha u=f(u),\   \   t>0,x\in\RR^N
$$
$0<\alpha<1$, with $u(0,.)$ compactly supported, they prove the
 \begin{theorem}[\cite{XCACCJMR}]
\label{t2.1}
We have, for a universal $C>0$:
\begin{equation}
\frac{C^{-1}}{1 + e^{-\kappa t} \left| x \right|^{N+2\alpha}} \leq u(x,t) \leq
\frac{C}{1 + e^{-\kappa t} \left| x \right|^{N+2\alpha}} \label{u-bounds}
\end{equation}
\end{theorem}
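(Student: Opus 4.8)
\emph{Proof strategy.} Write $\kappa:=f'(0)$ and $M:=\max(1,\|u_0\|_{L^\infty})$, so that $0\le u\le M$ by the comparison principle, $u(\cdot,t)>0$ for every $t>0$ (the fractional heat kernel $p_t$ being everywhere positive), and, by strict concavity of $f$ together with $f(0)=0$, there are constants $\theta,L>0$ with $f(s)\le f'(0)s-\theta s^2$ and $f(s)\ge-Ls$ for $s\in[0,M]$. The plan is to prove the two inequalities in \eqref{u-bounds} separately, the upper one being the substantial part, and to start by observing that at time $1$ the solution already decays like the fractional heat kernel: since $u$ is a subsolution of $w_t+(-\Delta)^\alpha w=f'(0)w$ we get $u(\cdot,1)\le e^{f'(0)}\,p_1*u_0$, and since $u$ is a supersolution of $w_t+(-\Delta)^\alpha w=-Lw$ we get $u(\cdot,1)\ge e^{-L}\,p_1*u_0$; as $u_0$ is bounded, compactly supported and nonnegative, $\not\equiv0$, and $p_1(x)$ is comparable to $(1+|x|^{N+2\alpha})^{-1}$, this yields $C_0^{-1}(1+|x|^{N+2\alpha})^{-1}\le u(x,1)\le C_0(1+|x|^{N+2\alpha})^{-1}$ for some $C_0\ge1$. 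It then remains to propagate such a profile forward in time.

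For the \textbf{upper bound} I would look for a supersolution of self-similar tail type,
$$\overline u(x,t)=\min\Bigl\{M,\ A\,\Phi\bigl(e^{-\lambda(t+\tau)}x\bigr)\Bigr\},\qquad \Phi(y):=\frac1{1+|y|^{N+2\alpha}},\quad \lambda:=\frac{f'(0)}{N+2\alpha},$$
with $A$ a large constant and $\tau$ a large time shift; this shape is the natural one, since its level sets move exactly like $e^{\lambda t}$ and its tail mimics $e^{f'(0)t}/|x|^{N+2\alpha}$, and one checks directly the identity $\partial_t\overline u=f'(0)\,\overline u\,(1-\overline u/A)$ on the branch $\overline u=A\Phi(e^{-\lambda(t+\tau)}x)$. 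Because the minimum of two supersolutions is again a supersolution of $(-\Delta)^\alpha$ — the nonlocal operator only records that $\overline u$ lies below each branch, with equality at the point considered — and a constant $\ge1$ is trivially a supersolution, it is enough to verify the inequality on $\{A\Phi(e^{-\lambda(t+\tau)}x)<M\}$, where it reduces, after inserting the above identity and $f(\overline u)\le f'(0)\overline u-\theta\overline u^2$, to controlling $(-\Delta)^\alpha_x\overline u=A\,e^{-2\alpha\lambda(t+\tau)}\,[(-\Delta)^\alpha\Phi]\bigl(e^{-\lambda(t+\tau)}x\bigr)$ against the favourable quadratic term $(\theta-f'(0)/A)\,\overline u^2$. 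The key lemma is a sharp pointwise estimate on $(-\Delta)^\alpha\Phi$, obtained by splitting the defining singular integral into an inner, an annular and a far zone: the far tail contributes a genuinely negative amount comparable to $-\Phi$ — this is precisely the mass pushed out of the core by the heavy tail, i.e.\ the acceleration mechanism — which is why the \emph{strict} concavity of $f$, not merely $f(s)\le f'(0)s$, is indispensable, and why the prefactor $e^{-2\alpha\lambda(t+\tau)}$, small once $\tau$ is large, is used to fit the error under the quadratic margin; the innermost part of the tail, where the quadratic margin is too weak, is instead covered by the elementary linear bound $u\le e^{f'(0)t}p_t*u_0$, which is already sharp there. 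Gluing these pieces gives $u(\cdot,1)\le\overline u(\cdot,1)$ and that $\overline u$ is a supersolution, hence $u\le\overline u$ for $t\ge1$, which after absorbing $A$ and $\tau$ into the constant is the upper bound in \eqref{u-bounds} with $\kappa=f'(0)$.

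For the \textbf{lower bound} I would argue in the mirror way. First a \emph{seed}: by standard sub-solution arguments (or the already established Theorem~\ref{t2.01}) there are $m_0>0$ and $t_1$ with $u(x,t)\ge m_0$ for $|x|\le1$, $t\ge t_1$. Then I would build a subsolution $\underline u(x,t)=\eps\,\Phi\bigl(e^{-\lambda(t+\tau')}x\bigr)$ with $\eps$ small, which at time $t_1$ lies below $u$ everywhere — small near the core by the seed, small in the tail by the initial decay estimate — and which is a subsolution: here concavity helps through the lower bound on $f$ (and $\Phi$ small keeps us in the KPP régime), and one now needs the opposite-sided estimate $(-\Delta)^\alpha\Phi\le C\Phi$. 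Comparison then gives $u\ge\underline u$ for $t\ge t_1$, i.e.\ the lower bound in \eqref{u-bounds} after absorbing constants. Taking $C$ to be the larger of the two constants obtained and $\kappa:=f'(0)$ completes the proof. The step I expect to be the main obstacle is the supersolution construction: a crude iteration of the linearised estimate loses a multiplicative factor — in fact a power of $t$, coming from the heavy tail of the fractional heat kernel — over each unit of time, which would degrade the exponent $\kappa$, so isolating the sharp value $\kappa=f'(0)$ forces one to exploit the genuinely nonlinear damping coming from strict concavity and make it quantitative against the negative far-field contribution of $(-\Delta)^\alpha\Phi$, the balance between these two, region by region in the zonal splitting of the nonlocal integral, being the delicate point.
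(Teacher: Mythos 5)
Your overall architecture (decay of $u(\cdot,1)$ like the fractional heat kernel at time $1$, then a pair of self-similar sub/supersolutions with $\kappa=f'(0)$ and $\lambda=f'(0)/(N+2\alpha)$) is indeed the one used in \cite{XCACCJMR}, and your lower bound and your time-$1$ estimate are sound. The gap is in the supersolution. On the branch $\overline u=A\Phi(e^{-\lambda(t+\tau)}x)<M$ you must absorb $(-\Delta)^\alpha\overline u=Ae^{-2\alpha\lambda(t+\tau)}[(-\Delta)^\alpha\Phi](\xi)$, and the negative part of $(-\Delta)^\alpha\Phi(\xi)$ for large $|\xi|$ is of order $-\|\Phi\|_{L^1}|\xi|^{-(N+2\alpha)}\sim-\Phi(\xi)$ (it is the core mass seen from far away), so the error is \emph{linear} in $\overline u$, of size $Ce^{-2\alpha\lambda(t+\tau)}\overline u$. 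Your only margin is quadratic, $(\theta-f'(0)/A)\overline u^{\,2}$, because with a \emph{constant} shift $\tau$ the identity $\partial_t\overline u=f'(0)\overline u(1-\overline u/A)$ leaves nothing linear to spare. The inequality $Ce^{-2\alpha\lambda(t+\tau)}\overline u\leq c\,\overline u^{\,2}$ fails as soon as $\overline u\lesssim e^{-2\alpha\lambda(t+\tau)}$, i.e.\ for $|x|\gtrsim e^{\lambda(1+\frac{2\alpha}{N+2\alpha})(t+\tau)}$ --- the \emph{outermost} part of the tail, not the innermost --- no matter how large the constant $\tau$ is. The patch you propose there does not close the gap: the linearised bound $e^{f'(0)t}p_t*u_0$ behaves like $t\,e^{f'(0)t}|x|^{-(N+2\alpha)}$ in that region, exceeding the claimed bound by the very factor $t$ you identify as the obstacle (if it were sharp there, the theorem would be false); and for a nonlocal operator one cannot apply comparison only on the region where the supersolution inequality holds while controlling the complement by an unrelated estimate, since the exterior values enter the operator.

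The repair is precisely the device the paper attributes to \cite{XCACCJMR}: build the profile from an entire solution of $\dot\psi=f(\psi)$, or equivalently let the shift depend on time. Concretely, either take $\phi(\xi)=\psi(-\lambda^{-1}\log|\xi|)$, so that $-\lambda\xi\cdot\nabla\phi=f(\phi)$ exactly and $\phi(\xi)\sim|\xi|^{-(N+2\alpha)}$ at infinity, or keep your $\Phi$ but replace the constant $\tau$ by an increasing bounded function $\tau(t)$ with $\tau'(t)\geq Ce^{-2\alpha\lambda t}$. In either case $\partial_t\overline u-f(\overline u)$ acquires a term bounded below by $c\,\tau'(t)\,\overline u$, \emph{linear} in $\overline u$ with a summable-in-time coefficient, which dominates the fractional-Laplacian error $Ce^{-2\alpha\lambda t}\overline u$ uniformly in $x$; and since $\tau$ stays bounded the resulting supersolution is still comparable to $(1+e^{-f'(0)t}|x|^{N+2\alpha})^{-1}$. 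With that modification your argument goes through.
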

This is done by  introduction of  the invariant coordinates $\xi = x
e^{-\lambda t}$, and a pair of sub/super solutions is sought for in those
coordinates, where $u$ solves
\begin{equation}
\partial_t u - \lambda \xi \cdot \nabla_\xi u + e^{-\alpha \lambda t}
(-\Delta)^\alpha u - f(u) = 0 \label{xi-FKPP}
\end{equation}
The construction of sub/super solutions for \eqref{xi-FKPP} is connected to the existence
of suitably decaying solutions of 
$$
-\lambda\xi\phi'=f(\phi),
$$
on the whole line, which just amounts to finding entire solutions of $\dot\psi=f(\psi)$. Trying to extend this idea here 
amounts to rescaling the $x$ variable, defining the  functions
$\tilde v(\xi,y,t):=v(e^{\gamma t}\xi,y,t)$ and $\tilde u(\xi,t):=u(e^{\gamma 
t}\xi,t)$, with the idea that $\gamma=f'(0)/(1+2\alpha)$.
%
%
If we - formally - neglect the diffusive terms 
$ e^{-2\gamma t} \tilde v_{\xi\xi}$  and $e^{-2\a\gamma t}(-\partial 
_{\xi\xi})^{\a}\tilde u$, that should go to $0$ as $t$ tends to $+\infty$, we 
end up with the following transport system
\begin{equation}
\label{selfsim}
\left\{\begin{array}{rcll}
\partial_t\tilde v-\gamma \xi  \partial_{\xi}\tilde v-\partial_{yy}\tilde v &=&f(\tilde v),&\xi \in\R, y >0, t>0, \\
 \partial_{t}\tilde u-\gamma \xi  \partial_{\xi}\tilde u &=& -\mu \tilde u + \tilde v- k\tilde u,& \xi \in \R,y=0, t>0,\\
- \partial_{y} \tilde v&=&\mu \tilde u -\tilde v, & \xi \in \R,y=0, t>0.\\
\end{array}\right.
\end{equation}
The idea is to look for  stationary solutions to 
 that system, and try to deform them. However, we are not able to carry out that program, and there is a deep reason
 for that. 
The subsolution  will be constructed in a different way than the supersolution, 
which will result in a loss of precision in 
estimating the propagation speed on the road.
\begin{enumerate}
 \item {\bf The upper bound.} We use the classical remark that $f(v)\leq f'(0)v$ to bound the solution of \eqref{Cauchy} by that of the linearized system at $v=0$, i.e.
 the solution $(\o v,\o u)$ of 
$$
\left\{\begin{array}{rcll}
 \partial_{t}\o v-\Delta\o v &=&f'(0)\o v,&x \in\R, y >0, t>0 \\
 \partial_{t}\o u+(-\partial _{xx})^{\a}\o u&=& -\mu \o u +\o v- k \o u,& x \in \R,y=0, t>0\\
-  \partial_{y}\o v&=&\mu\o  u - v, & x \in \R,y=0, t>0,\\
\end{array}\right.
$$
What will be of interest to us will be the behavior f $u$ on the road, the rest of the solution being handled with standard 
arguments of parabolic equations. The main results that we will prove is the existence of a constant $c_\alpha>0$ such that
\begin{equation}
\label{estimsursol}
\o u(x,t)\sim\frac{c_\alpha e^{f'(0)t}}{(k+f'(0))^3\vert x\vert^{1+2\a}t^{\frac{3}{2}}}
\quad\text{as}\quad \vert x\vert, t \to+\infty.
\end{equation}
This will give, for all $\gamma>\gamma_{\star} =\dis\frac{f'(0)}{1+2\a}$ :
$$
\lim_{t\rightarrow +\infty}\o u(x,t)=0 \quad \text{ uniformly in } \abs x \geq e^{\gamma t}.
$$
 In fact, we have, for $t$ large enough
$$
\left\{ x \in \R \ | \ \o u(x,t) = \lambda \right \} \subset \left\{x \in \R \ | \ \abs x \leq C_{\lambda}t^{-\frac{3}{2(1+2\a)}}e^{\frac{f'(0)}{(1+2\a)}t}\right\}.
$$
 \item {\bf Lower bound.} We apply  the methodology
introduced in \cite{XCACCJMR}, but we adapt it in an important fashion: since it seems difficult  to construct a stationary subsolution to the rescaled
transport problem, we work in a strip of width $L$ instead of the half plane and
let $L$ go to infinity. An explicit subsolution is constructed  under the
form 
\begin{equation*}
\u v(x,y,t) =\left\{\begin{array}{ll} \phi(xBe^{-\gamma t})\dis \sin\left( \frac{\pi}{L} y+h\right)& \mbox{ if } 0<y<L(1-\frac{h}{\pi})\\
0&\mbox{ if }  y\geq L(1-\frac{h}{\pi})
\end{array},\quad \u u (x,t)= c_h \phi(x Be^{-\gamma t}) \right.,
\end{equation*}
where $ \gamma \in\left(0,\dis\frac{f'(0)}{1+2\a}\right)$ and $\phi$ decays
like $\abs x^{-(1+2\a)}$.
\end{enumerate}

\subsection{Consequences and remarks }

\noindent {\bf 1.} The speed of propagation for this two dimensional model cannot be purely exponential. This also explains why trying to construct steady solutions to \eqref{selfsim}
does not lead anywhere.

\noindent {\bf 2.}  The influence of the road is felt on the expression of the solution to \eqref{linear}. Indeed, although there is no increase of matter  on the road, the maximal growth rate - i.e. the same as in the  ODE $\dot u=f(u)$ - is chosen and the system behaves as if there were an effective growth term on the road.

\noindent{\bf 3.} The effect of the mortality on the road is never felt in the growth exponent. Its only influence is that it divides the fundamental solution by a large factor. But, if one waits for a sufficiently long time, one will in the end observe propagation at exponential velocity. This suggests to study the asymptotics $k\to+\infty$, and a transition of the type discovered in \cite{ACCJMR} for $\alpha\to 1$.

\noindent {\bf 4.} This raises the question of whether this is the correct asymptotics: sharp spreading rates are indeed not given, in general, by that of the linearized equation. The following simulation, taken from \cite{ACC_these}, investigates the issue. It solves the rescaled problem satisfied by $\t v$ and $\t u$,  defined on $\R\times \R_+\times \R_+$, by
$$
\t v(\t x,y,t)=v(e^{l t}t^{-m}\t x,y,t) \quad \text{ and } \quad \t u(\t x,t)=u(e^{lt}t^{-m}\t x,t).
$$
Here $l=\frac{1}{1+2\a}$, and $m\geq 0$ the constant that we want to discover.

\begin{figure}[!h]
\centering
\includegraphics[width=5 cm, height=7.5cm]{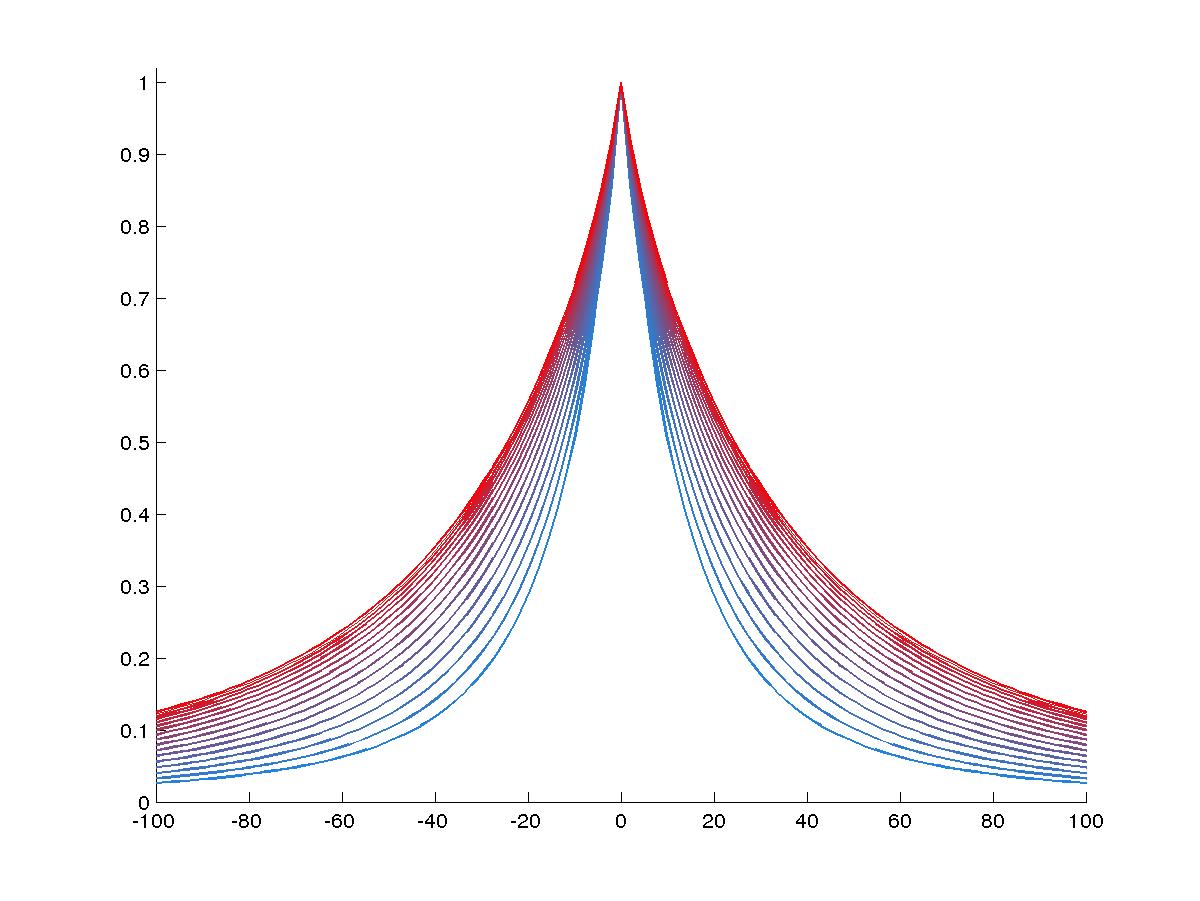}
\includegraphics[width=5 cm, height=7.5cm]{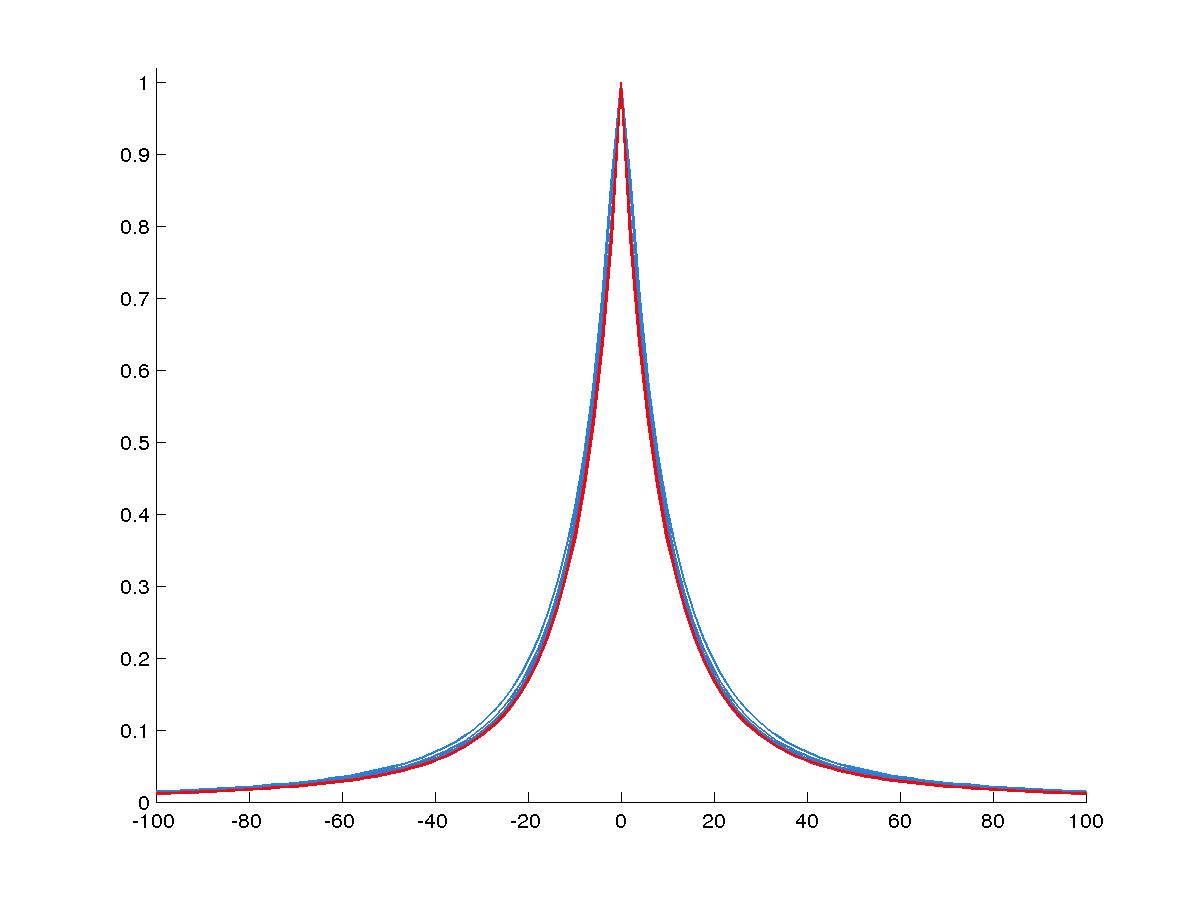}
\includegraphics[width=5 cm, height=7.5cm]{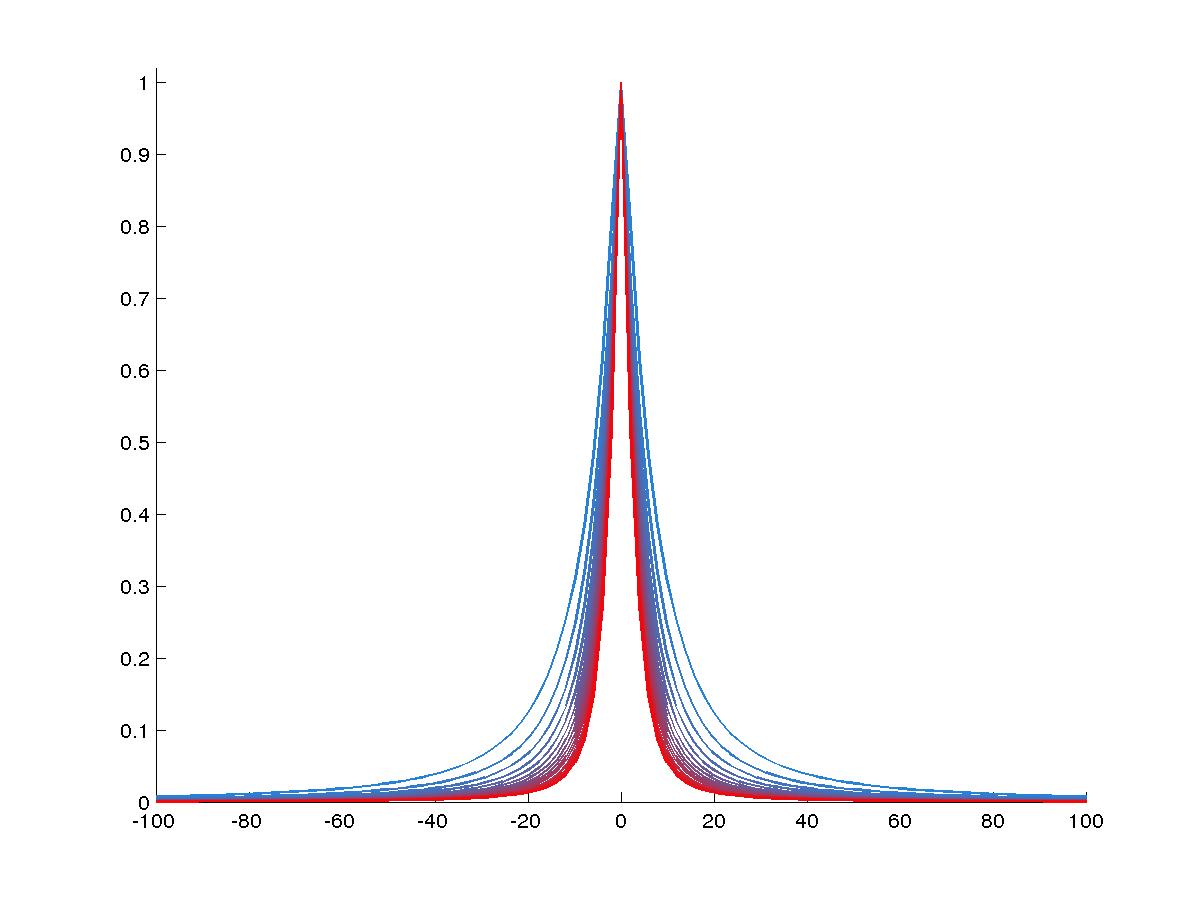}
\caption{Evolution of the density $\t u$ with $\a=0,5$,  for $m=0$ (on the left), $m=\frac{3}{2(1+2\a)}$ (in the center) and $m=\frac{3}{1+2\a}$ (on the right),  at successive times $t=30,40,50,...,200$ with a color graduation from blue to red.}
\label{choixm1}
\end{figure}

\noindent The left side of Figure \ref{choixm1}, that concerns $m=0$, shows that the  level sets move faster than  $e^{\frac{t}{1+2\a}},$ whereas the right side, that concerns $m=\frac{3}{1+2\a}$, shows that the  level sets move slower than $t^{-\frac{3}{1+2\a}}e^{\frac{t}{1+2\a}}.$ 
The center of Figure \ref{choixm1} concerns the particular choice $m=\frac{3}{2(1+2\a)}$, suggested by the upper bound of Theorem \ref{t4.3}. On compact sets, the rescaled density $\t u$ seems to converge to a function that does not move in time. 


\section{The supersolution}\label{sec:super}
 In this section, we are going to present the main lines of the proof of the
\begin{theorem}[Coulon \cite{ACC_these}]
\label{t4.3} Let $(\o v(x,y,t),\o u(x,t))$ solve
\begin{equation}
\label{linear}
\left\{\begin{array}{rcll}
 \partial_{t}\o v-\Delta\o v &=&f'(0)\o v,&x \in\R, y >0, t>0 \\
 \partial_{t}\o u+(-\partial _{xx})^{\a}\o u&=& -\mu \o u +\o v- k \o u,& x \in \R,y=0, t>0\\
-  \partial_{y}\o v&=&\mu\o  u - v, & x \in \R,y=0, t>0,\\
\end{array}\right.\end{equation}
with $(\o v(x,y,0),\o u(x,0))=(0,u_0(x))$ and 
$u_0\not\equiv 0$ nonnegative and compactly supported. There exists a function $R(t,x)$
and constants $\delta>0$, $C>0$ such that
\begin{enumerate}
\item we have, for large $x$:
$$
\biggl\vert \o u(x,t)-\frac{8\alpha\mu \sin(\alpha\pi)\Gamma(2\alpha)\Gamma(3/2)}{\pi (k+f'(0))^3}\frac{e^{f'(0)t}}{t^{3/2}\vert x\vert^{1+2\alpha}}\biggl\vert\leq R(t,x),
$$
\item and the function $R(t,x)$ is estimated as
$$
0\leq R(t,x)\leq C\left(e^{-\delta t}+\frac{e^{f'(0)t}}{\vert x\vert^{\min(1+4\a,3)}}+\frac{e^{f'(0)t}}{\vert x\vert^{1+2\a}t^{\frac{5}{2}}}\right).
$$
\end{enumerate}
\end{theorem}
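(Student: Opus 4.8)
Since \eqref{linear} is linear and the statement concerns only the trace $\o u$ on the road, I would produce an explicit representation of $\o u$ via a Fourier transform in $x$ and a Laplace transform in $t$, and read the expansion off it. By the comparison principle $0\leq\o u(\cdot,t)\leq Ce^{f'(0)t}$, so the Laplace transform converges for $\Re\lambda$ large. Writing $\mathcal U(\xi,\lambda)$, resp.\ $\mathcal V(\xi,y,\lambda)$, for the Fourier--Laplace transform of $\o u$, resp.\ of $\o v$, the first line of \eqref{linear} gives $\partial_{yy}\mathcal V=(\lambda+\xi^2-f'(0))\mathcal V$, whose decaying solution is $\mathcal V(\xi,y,\lambda)=A(\xi,\lambda)e^{-\omega y}$ with $\omega=\omega(\xi,\lambda):=\sqrt{\lambda+\xi^2-f'(0)}$, the branch fixed by $\Re\omega>0$ on $\C\setminus(-\infty,f'(0)-\xi^2]$. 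The exchange condition forces $(\omega+1)A=\mu\,\mathcal U$, and then the road equation, with $\o u(\cdot,0)=u_0$ and $\o v(\cdot,0)=0$, gives
$$\mathcal U(\xi,\lambda)=\frac{\hat u_0(\xi)}{D(\xi,\lambda)},\qquad D(\xi,\lambda):=\lambda+|\xi|^{2\a}+\mu+k-\frac{\mu}{\omega(\xi,\lambda)+1}.$$
Everything is thus reduced to inverting this transform.

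Next I would invert the Laplace transform by collapsing the Bromwich contour onto the branch cut. The singularities of $\lambda\mapsto\mathcal U(\xi,\lambda)$ are the branch point $\lambda=f'(0)-\xi^2$ and the zeros of $D$; one checks that $\Re D>0$ whenever $\Re\lambda\geq0$ and $\lambda$ is off the cut, so \emph{all} zeros of $D$ lie in $\{\Re\lambda\leq-\delta\}$ for some $\delta>0$, and $D$ stays bounded away from $0$ near the branch point when $|\xi|$ is small. Deforming the contour and parametrising the cut as $\lambda=f'(0)-\xi^2-\rho$ ($\rho>0$), on the two lips $\omega=\pm i\sqrt\rho$, so the jump of $1/D$ is explicit and one gets
$$\hat{\o u}(\xi,t)=\frac{\mu}{\pi}\,\hat u_0(\xi)\,e^{(f'(0)-\xi^2)t}\int_0^{+\infty}\frac{\sqrt\rho\;e^{-\rho t}}{(1+\rho)\big(P(\xi,\rho)^2+Q(\rho)^2\big)}\,d\rho,$$
up to a remainder $O(e^{-\delta t})$ from the residues at the zeros of $D$; here $P(\xi,\rho)=f'(0)+k-\xi^2+|\xi|^{2\a}-\rho-\frac{\mu\rho}{1+\rho}$ and $Q(\rho)=\frac{\mu\sqrt\rho}{1+\rho}$, and the borderline zero is the resonance $\xi_0$, where $P(\xi_0,0)=0$ forces $f'(0)-\xi_0^2=-(k+|\xi_0|^{2\a})<0$. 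Then $\o u(x,t)=\frac1{2\pi}\int_\R e^{ix\xi}\hat{\o u}(\xi,t)\,d\xi$. This step — the branch choices, the lower bound on $|D|$ on the deformed contour, the control of the resonant region — is what I expect to be the main obstacle.

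The asymptotics then follow by two elementary extractions. Watson's lemma applied to the $\rho$-integral (its integrand is smooth in $\rho$ at $\rho=0$, with value $P(\xi,0)^{-2}=(f'(0)+k-\xi^2+|\xi|^{2\a})^{-2}$) gives
$$\int_0^{+\infty}\frac{\sqrt\rho\;e^{-\rho t}}{(1+\rho)(P^2+Q^2)}\,d\rho=\frac{\Gamma(3/2)}{t^{3/2}}\,\frac1{\big(f'(0)+k-\xi^2+|\xi|^{2\a}\big)^{2}}+O\!\Big(\frac1{t^{5/2}}\Big),$$
uniformly for $\xi$ near $0$, the $t^{-3/2}$ being precisely $\int_0^\infty\sqrt\rho\,e^{-\rho t}\,d\rho$. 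Inserting this into $\hat{\o u}$ and inverting the Fourier transform, the large-$|x|$ behaviour is dictated by the non-smoothness at $\xi=0$ of $\xi\mapsto\hat u_0(\xi)e^{-\xi^2t}(f'(0)+k-\xi^2+|\xi|^{2\a})^{-2}$, which near $0$ equals $\hat u_0(0)\big((f'(0)+k)^{-2}-2(f'(0)+k)^{-3}|\xi|^{2\a}\big)$ up to terms that are smooth or $O(|\xi|^{\min(4\a,2)})$; since $\mathcal F^{-1}[|\xi|^{2\a}](x)\propto\sin(\pi\a)\Gamma(2\a)|x|^{-1-2\a}$, assembling $\Gamma(3/2)$, the Fourier constant and the factor $(f'(0)+k)^{-3}$ reproduces the main term of the statement (carrying an extra factor $\int_\R u_0=\hat u_0(0)$, immaterial by linearity).

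It remains to bound $R(t,x)$ term by term. The $O(t^{-5/2})$ Watson tail, still carrying the $|\xi|^{2\a}$-type non-smoothness, produces the $e^{f'(0)t}|x|^{-1-2\a}t^{-5/2}$ contribution; the higher non-smooth terms $|\xi|^{\min(4\a,2)}$ (and the $\xi|\xi|^{2\a}$ coming from $\hat u_0(\xi)-\hat u_0(0)$) produce the $e^{f'(0)t}|x|^{-\min(1+4\a,3)}$ contribution; and the part of the $\xi$-integral over $\{|\xi|\geq\sqrt{f'(0)}\}$ (where $e^{(f'(0)-\xi^2)t}$ decays), over a fixed neighbourhood of $\pm\xi_0$ (where $e^{-(k+|\xi|^{2\a})t}$ decays), the residue terms (with $\Re\lambda\leq-\delta$), and the intermediate range $0<|\xi|<\sqrt{f'(0)}$ — handled for large $|x|$ by repeated integration by parts in $\xi$, using smoothness of $\hat{\o u}(\cdot,t)$ there — all fit into $Ce^{-\delta t}$. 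The recurring difficulty is keeping every estimate uniform in the joint limit $|x|,t\to+\infty$, in particular along $|x|\sim e^{\gamma t}$ with $\gamma$ close to $\gamma_\star$, which is exactly why quantitative lower bounds on $|D|$ along the deformed contour are needed.
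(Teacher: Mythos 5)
Your proposal is correct and follows essentially the same route as the paper: a Fourier--Laplace representation of $\o u$ whose symbol $D(\xi,\lambda)(\omega+1)$ is exactly the paper's $P$, reduction to the branch-cut integral (the paper's Lemma \ref{I(r,t)}), separation of the zero-free region $|\xi|<r_0$ from the resonant region, Watson's lemma in $t$, and extraction of the $|x|^{-1-2\alpha}$ decay from the $|\xi|^{2\alpha}$ singularity at $\xi=0$ (which the paper implements via the same Polya-type contour rotation). The only noteworthy point is the one you already flag: the main term should carry the factor $\int_\R u_0$, which the paper's stated constant omits (and there is an immaterial sign slip in your $P(\xi,\rho)$, where the last term should be $+\mu\rho/(1+\rho)$).
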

This is a computationally nontrivial result whose full proof will not be given here. 
However we will give the main steps. Let $r_0>0$ be the unique solution of
\begin{equation}\label{rappelr_0}
r_{0}^2=r_{0}^{2\a}+f'(0)+k.
\end{equation}
will be denoted by $r_0$. It is crucial to notice that $r_0>\sqrt{f'(0)}$. 

We define 
$
(\o v_1, \o u_1)= e^{-f'(0)t}(\o v,\o u),$
 the equation solved by $(\o v_1,\o u_1)$ is
$$
\partial_t\left(\begin{array}{c} \o v_1\\\o u_1\end{array} \right)=-\t A \left(\begin{array}{c} \o v_1\\  \o u_1\end{array} \right)=-\left(\begin{array}{c} -\Delta\o v_1\\(-\partial _{xx})^{\a}\o u_1 +\mu \o u_1 -\o v_1+ k \o u_1+f'(0)\o u_1\end{array} \right).
$$
\subsection{Fourier-Laplace transform}
The operator $\t A$ is similar to the operator $A$ defined in \eqref{A} with the constant $k$ replaced by $k+f'(0)$. Its domain is $D(A)$ given by \eqref{domainA}. It is a sectorial operator on $X$ with angle $\beta_{\t A} \in (0, \frac{\pi}{2})$. And so, we have
\begin{equation}\label{Laplace}\left(\begin{array}{c} \o v_1(x,y,t)\\ \o u_1(x,t)\end{array}\right)=\frac{1}{2i\pi}\int_{\Gamma_{0,\beta_{\t A}}}(\t A-\lambda I)^{-1}\left(\begin{array}{c} 0\\u_0(x) \end{array}\right)e^{-\lambda t}d\lambda.
\end{equation}
 
The computation of $(\t A-\lambda I)^{-1}\left(\begin{array}{c} 0\\u_0 \end{array}\right)$ in the Fourier variables is  an easy step that gives
\begin{equation}\label{fourier}
(\t A-\lambda I)^{-1}\left(\begin{array}{c} 0\\u_0 \end{array}\right)=\left(\begin{array}{c}\dis \mathcal{F}^{-1}\left(\xi \mapsto \frac{\mu}{P (\lambda,\abs \xi)}e^{-\sqrt{-\lambda+\abs\xi^2}y}\right)\star u_0\\  \dis \mathcal{F}^{-1}\left(\xi \mapsto \frac{\sqrt{-\lambda+\abs{\xi}^2}+1}{P (\lambda,\abs\xi)}\right)\star u_0 \end{array}\right),
\end{equation}
where $P$ is given by  
\begin{equation}\label{rappelP}P( \lambda,\abs \xi):=\left(-\lambda+\abs \xi^{2\a}+\mu+ k+f'(0)\right)\left(\sqrt{-\lambda+\abs \xi^2}+1\right)-\mu.
\end{equation}
 The computation of this inverse Fourier transform  requires the knowledge of the location of the zeroes of $P$; one may prove that (see \cite{ACC_these}):
\begin{itemize}[\small$\bullet$]
\item if $\abs \xi < r_0$, for any $\lambda \in \C$, $P(\lambda, \abs \xi)$ does not vanish,
\item if $\abs \xi \geq r_0$, $P(\lambda, \abs \xi)$ may vanish for some real values of $\lambda$.
\end{itemize}
\subsection{First reduction }
The following preliminary  lemma simplifies the expression of the inverse Fourier transform. Its poof is not given here, see \cite{ACC_these}.
\begin{lemma}\label{I(r,t)} Let  $r_0$ be defined in \eqref{rappelr_0} and $P$ be defined in \eqref{rappelP}. For $r\geq0$, $t>1$ and a constant $\beta\in (0,\frac{\pi}{2})$, we set
\begin{equation}\label{Ibeta}
I_{\beta}(r,t)=\frac{1}{i \pi}\int_{\Gamma_{0,\beta}}\frac{\sqrt{-\lambda+r^2}+1}{P(\lambda,r)}e^{-\lambda t} d\lambda,
\end{equation}
where $\Gamma_{0,\beta}=\R_+e^{i\beta}\oplus \R_+e^{-i\beta}$.
Then, for all  $c\in (0,1)$, we have
\begin{enumerate}
\item For $r\in (0,c r_0)$ and $t>1$ :
$$
I_{\beta}(r,t)=\frac{2\mu e^{-r^2t}}{\pi}\int_0^{\infty}\frac{\sqrt \nu}{\abs{P(r^2+\nu,r)}^2}e^{-\nu  t} d\nu.
$$
\item There exists a universal constant $C>0$ such that, for all $r\geq c r_0$ and all $t>1$
 \begin{equation*}\label{partie2}\abs{I_{\beta}(r,t)}\leq C e^{-(r^{2\a} +k+f'(0)-\eps_0 )t}\left(\sqrt{\abs{r^2-(r^{2\a} +k+f'(0)-\eps_0)}}+1\right),
\end{equation*}
where $\eps_0=r_0^{2\a}(1-c^{2\a})>0$.
\end{enumerate}
\end{lemma}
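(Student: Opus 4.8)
The plan is to evaluate $I_\beta(r,t)$ by contour deformation, the point being that the only singularities of the integrand $\lambda\mapsto\frac{\sqrt{-\lambda+r^2}+1}{P(\lambda,r)}e^{-\lambda t}$ are the branch point $\lambda=r^2$ of the principal square root (with cut $[r^2,+\infty)$) and the zeros of $P(\cdot,r)$. As preliminaries I would record that, $\tilde A$ being sectorial, the contour $\Gamma_{0,\beta}$ in \eqref{Ibeta} may be taken as the boundary of the sector $\{|\arg\lambda|\leq\beta\}$, which surrounds the spectrum; for fixed real frequency $r$ the relevant ``spectrum'' is $\{r^2\}\cup\{\lambda:P(\lambda,r)=0\}$. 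Since $P(\lambda,r)\sim|\lambda|^{3/2}$ and the numerator $\sim|\lambda|^{1/2}$ as $|\lambda|\to\infty$ along rays, while $e^{-\lambda t}\to 0$ as $\re\lambda\to+\infty$, the contour may be freely deformed inside the analyticity region, picking up residues whenever a pole is crossed; the natural target is the Hankel-type contour wrapping $[r^2,+\infty)$.

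\emph{Part 1 ($0<r<cr_0<r_0$).} Here $P(\cdot,r)$ has no zero, so the only singularity is the branch point, and collapsing $\Gamma_{0,\beta}$ onto the cut the small circle about $r^2$ contributes nothing because the integrand is bounded there ($P(r^2,r)=r^{2\alpha}+k+f'(0)-r^2\neq0$ since $r\neq r_0$, using \eqref{rappelr_0}). On the two lips of the cut, writing $\lambda=r^2+\nu\pm i0$ with $\nu>0$ one has $\sqrt{-\lambda+r^2}=\mp i\sqrt\nu$, so the two contributions are complex conjugates of one another (all parameters being real), and the cut integral reduces to $\frac{2}{\pi}e^{-r^2t}\int_0^{\infty}\im\bigl[\tfrac{\sqrt{-\lambda+r^2}+1}{P(\lambda,r)}\bigr]_{\lambda=r^2+\nu+i0}e^{-\nu t}\,d\nu$. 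A one-line computation (rationalising the denominator) gives $\im[\cdots]=\mu\sqrt\nu/|P(r^2+\nu,r)|^2$, which is exactly the asserted identity.

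\emph{Part 2 ($r\geq cr_0$).} Now $P(\cdot,r)$ may vanish; one first shows its zeros $\lambda_*(r)$ are real and satisfy $\lambda_*(r)\leq r^2$ (a real zero forces $r^2-\lambda_*\geq0$). The key quantitative step is to refine the location statement preceding the lemma into the two-sided bound $r^{2\alpha}+k+f'(0)-\eps_0\leq\lambda_*(r)\leq r^2$, together with a uniform lower bound $|\partial_\lambda P(\lambda_*(r),r)|\geq c_0>0$; both follow by analysing $\bigl(-\lambda+r^{2\alpha}+\mu+k+f'(0)\bigr)\bigl(\sqrt{r^2-\lambda}+1\bigr)=\mu$ and invoking \eqref{rappelr_0} and the choice $\eps_0=r_0^{2\alpha}(1-c^{2\alpha})$. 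Deforming $\Gamma_{0,\beta}$ onto the cut then yields $I_\beta(r,t)=\sum(\text{residues at the }\lambda_*(r))+(\text{cut integral})$. The cut integral is bounded, as in Part 1, by $Ce^{-r^2t}\int_0^{\infty}\mu\sqrt\nu\,|P(r^2+\nu,r)|^{-2}e^{-\nu t}\,d\nu\leq C'e^{-r^2t}$ for $t>1$, hence by $C'e^{-(r^{2\alpha}+k+f'(0)-\eps_0)t}$; each residue equals, up to a constant, $\tfrac{\sqrt{r^2-\lambda_*(r)}+1}{\partial_\lambda P(\lambda_*(r),r)}e^{-\lambda_*(r)t}$, which by the uniform bounds above is at most $C\bigl(\sqrt{|r^2-(r^{2\alpha}+k+f'(0)-\eps_0)|}+1\bigr)e^{-(r^{2\alpha}+k+f'(0)-\eps_0)t}$. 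Summing the finitely many terms gives the stated estimate.

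The main obstacle is precisely this last quantitative control of the zero set of $P(\cdot,r)$: turning the qualitative ``$P$ may vanish for some real $\lambda$'' into the sharp containment $\lambda_*(r)\in[\,r^{2\alpha}+k+f'(0)-\eps_0,\ r^2\,]$ together with the non-degeneracy $|\partial_\lambda P|\gtrsim 1$ there, all uniformly in $r\geq cr_0$ (and, en route, ruling out spurious complex zeros) — this is where the algebraic structure of $P$ in \eqref{rappelP} and the defining relation \eqref{rappelr_0} must be fully exploited. One must also check that $r^2\geq r^{2\alpha}+k+f'(0)-\eps_0$ over the whole range $r\geq cr_0$, so that the branch-cut contribution is genuinely dominated by the announced exponential.
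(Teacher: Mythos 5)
The paper does not actually prove this lemma --- it explicitly defers the proof to the thesis \cite{ACC_these} --- so there is no in-paper argument to compare with, and I can only assess your proposal on its own terms. Your Part~1 is correct and essentially complete: since $P(\cdot,r)$ has no zero for $r<r_0$, the only singularity is the cut $[r^2,+\infty)$; the integrand is $O(|\lambda|^{-1}e^{-t\,\re\lambda})$ at infinity and bounded at the branch point (where $P(r^2,r)=r^{2\alpha}+k+f'(0)-r^2>0$ for $r<r_0$); and your jump computation $\im\bigl[(1+\sqrt{-\lambda+r^2})/P\bigr]=\mu\sqrt{\nu}/|P|^2$ on the upper lip, with $\sqrt{-\lambda+r^2}=-i\sqrt{\nu}$ there, is right and reproduces the stated identity with the correct sign.

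Part~2, however, is only a programme, and as written it cannot close. Everything you list under ``the main obstacle'' is the actual content of the estimate and none of it is carried out; worse, the last point you flag for verification --- that $r^2\geq r^{2\alpha}+k+f'(0)-\varepsilon_0$ for every $r\geq cr_0$, which you need in order to absorb the branch-cut term $O(e^{-r^2t})$ into the announced exponential --- is false in general. Set $h(r):=r^{2\alpha}+k+f'(0)-r^2$; using $k+f'(0)=r_0^2-r_0^{2\alpha}$ one computes
\[
h(cr_0)=(1-c^2)\,r_0^2-\varepsilon_0,
\]
so the required inequality $h(r)\leq\varepsilon_0$ already fails at $r=cr_0$ whenever $(1-c^2)r_0^2>2(1-c^{2\alpha})r_0^{2\alpha}$, e.g.\ for $\alpha=1/2$ and $r_0$ large (and one cannot escape by taking $c\to1$, since in that limit the condition degenerates to $r_0^{2-2\alpha}\leq 2\alpha$). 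This is not a removable technicality: for $r\in[cr_0,r_0)$ the polynomial $P(\cdot,r)$ still has no zero, so your own Part~1 identity gives $I_\beta(r,t)\sim \frac{2\mu\Gamma(3/2)}{\pi\,h(r)^2}\,t^{-3/2}e^{-r^2t}$ as $t\to+\infty$, which is \emph{not} $O(e^{-(r^{2\alpha}+k+f'(0)-\varepsilon_0)t})$ when $h(r)>\varepsilon_0$. To repair the argument you must either impose further restrictions tying $c$ to $\alpha,k,f'(0)$, or prove the bound with the exponent $\min\{r^2,\;r^{2\alpha}+k+f'(0)-\varepsilon_0\}$, which is all that the application in Section~5.4 requires (both quantities exceed $f'(0)$ once $c^2r_0^2>f'(0)$ and $c^{2\alpha}>1/2$, consistently with the hypothesis of Lemma~\ref{J}).

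By contrast, some of the zero-set analysis you postpone is easier than you suggest. Writing the zero equation as $(-\lambda+r^{2\alpha}+\mu+k+f'(0))(\sqrt{r^2-\lambda}+1)=\mu$, the left-hand side is, for real $\lambda\leq r^2$, a decreasing function of $\lambda$ where it is positive; hence for $r\geq r_0$ there is exactly one real zero $\lambda_*(r)$, and since the second factor is $\geq1$ the first must lie in $(0,\mu]$, giving directly $r^{2\alpha}+k+f'(0)\leq\lambda_*(r)\leq r^2$ --- even better than the containment you aim for. Moreover the two terms of $\partial_\lambda P(\lambda_*(r),r)$ have the same sign, so $|\partial_\lambda P(\lambda_*(r),r)|\geq\sqrt{r^2-\lambda_*(r)}+1\geq1$ whenever $r>r_0$, which settles the non-degeneracy. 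The genuinely delicate range is exactly the one where your plan breaks down, namely $cr_0\leq r<r_0$ (together with excluding complex zeros with $\re\sqrt{-\lambda+r^2}>0$, for which a discriminant analysis of the cubic $(s^2+h(r)+\mu)(s+1)=\mu$ is still needed).
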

\subsection { The main part  $\{\abs \xi < r_0\}$}
Estimating  the integral on  $\{\abs \xi < r_0\}$ is now a Polya type computation  \cite{kolo}, \cite{Polya}.  This is what we choose to develop here.
\begin{lemma}\label{J} Define $J(x,t)$ as
$$
J(x,t):=\int_{0}^{cr_0}I_{\beta_{\t A}}(r,t)e^{ixr}   dr.
$$
where $c$ $\eps>0$ satisfy $c^2r_0^2\cos(2\eps)>  f'(0) .$
There is $C>0$ universal such that, If $\vert x\vert$ is large enough, we have
\begin{equation*}
\abs{J(x,t)-\frac{4\alpha\mu \sin(\alpha\pi)\Gamma(2\alpha)\Gamma(3/2)}{\pi (k+f'(0))^3}\frac{e^{f'(0)t}}{t^{3/2}\vert x\vert^{1+2\alpha}} } \leq   C_2\left(e^{-c^2r_0^2\cos(2\eps)t}+e^{-\sqrt x \sin(\eps)}+\frac{1}{x^3}\right)
\end{equation*}
\end{lemma}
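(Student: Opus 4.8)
\textbf{Proof strategy for Lemma \ref{J}.}

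The plan is to analyze $J(x,t)=\int_0^{cr_0}I_{\beta_{\t A}}(r,t)e^{ixr}\,dr$ via a classical Polya-type (watson-lemma / steepest descent) argument, using the explicit formula from part (1) of Lemma \ref{I(r,t)}. Substituting that formula we get
\begin{equation*}
J(x,t)=\frac{2\mu}{\pi}\int_0^{cr_0}\!\!\left(\int_0^\infty \frac{\sqrt\nu}{\abs{P(r^2+\nu,r)}^2}e^{-\nu t}\,d\nu\right)e^{-r^2t}e^{ixr}\,dr.
\end{equation*}
The two exponential factors $e^{-\nu t}$ and $e^{-r^2 t}$ force the relevant contributions to come from a neighbourhood of $\nu=0$ and $r=0$ for large $t$; meanwhile the oscillatory factor $e^{ixr}$ governs the decay in $x$. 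First I would do the $\nu$-integral asymptotically: near $\nu=0$ one has $P(r^2+\nu,r)=P(r^2,r)+O(\nu)$, and $\int_0^\infty\sqrt\nu\,e^{-\nu t}\,d\nu=\Gamma(3/2)t^{-3/2}$, so the inner integral is $\Gamma(3/2)t^{-3/2}\abs{P(r^2,r)}^{-2}(1+O(1/t))$. Note $P(r^2,r)=(r^{2\a}+k+f'(0))\cdot 1-\mu+\mu\cdot 0$... more precisely, plugging $\lambda=r^2$ into \eqref{rappelP} makes the square-root term vanish, giving $P(r^2,r)=r^{2\a}+k+f'(0)$. So after the $\nu$-integration $J(x,t)$ reduces, up to controlled errors, to
\begin{equation*}
\frac{2\mu\,\Gamma(3/2)}{\pi\,t^{3/2}}\int_0^{cr_0}\frac{e^{-r^2 t}e^{ixr}}{(r^{2\a}+k+f'(0))^2}\,dr .
\end{equation*}

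Next I would extract the $e^{f'(0)t}$ growth and the $\abs x^{-(1+2\a)}$ decay from this remaining one-dimensional integral. The key point is that $r^{2\a}$ is the only non-smooth ingredient at $r=0$ (since $0<\a<1$), and it is precisely this singularity of the integrand at the origin that produces, by a Polya/Erdélyi-type expansion, a tail of order $\abs x^{-(1+2\a)}$ in the Fourier/Laplace transform — the smooth part of the integrand contributes only rapidly decaying ($O(x^{-\infty})$, or here $O(x^{-3})$ after truncation) terms. Concretely, I would write $(r^{2\a}+k+f'(0))^{-2}=(k+f'(0))^{-2}-2(k+f'(0))^{-3}r^{2\a}+O(r^{4\a})$ near $r=0$; the constant term integrates against $e^{-r^2t}e^{ixr}$ to something exponentially small plus $O(1/x)$-type smooth contributions that one must show are negligible (this is where the hypothesis $c^2 r_0^2\cos(2\e)>f'(0)$ enters — it lets one rotate/deform the $r$-contour into the complex plane to pick up the factor $e^{f'(0)t}$ cleanly while pushing the endpoint error down to $e^{-c^2r_0^2\cos(2\e)t}$, and the $r=0$ local analysis produces the $e^{-\sqrt x\sin\e}$ term). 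The term $-2(k+f'(0))^{-3}\int_0^\infty r^{2\a}e^{-r^2t}e^{ixr}\,dr$ is the one producing the main term: completing the Gaussian, or equivalently using the known asymptotics of $\int_0^\infty r^{2\a}e^{-r^2t+ixr}\,dr\sim \Gamma(2\a+1)\sin(\a\pi)\,\abs x^{-(1+2\a)}$ as $\abs x\to\infty$ (times the factor $e^{f'(0)t}$ coming out after the contour shift mentioned above), and carefully matching constants — $\Gamma(2\a+1)=2\a\Gamma(2\a)$ — I would recover exactly the claimed coefficient
$\frac{4\a\mu\sin(\a\pi)\Gamma(2\a)\Gamma(3/2)}{\pi(k+f'(0))^3}$.

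The main obstacle I anticipate is the bookkeeping of the three error sources and showing they are dominated by the three terms on the right-hand side: (i) the $O(1/t)$ correction from the $\nu$-integration, which must be shown to multiply into something already of the right order or better; (ii) the contribution of the constant (non-singular) part of $(r^{2\a}+k+f'(0))^{-2}$, which naively gives $O(1/x)$ from $\int_0^{cr_0}e^{ixr}\,dr$ but must be improved to $O(1/x^3)$ by integrating by parts twice, using that $e^{-r^2t}$ and the smooth remainder vanish to the right order at $r=0$ after the subtractions — this is the delicate part, since one has to simultaneously keep the $t$-dependence under control; and (iii) the endpoint at $r=cr_0$, which contributes $e^{-c^2 r_0^2 t}$-type terms that the contour rotation by angle $\e$ converts into the stated $e^{-c^2r_0^2\cos(2\e)t}$. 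Everything else is a routine (if lengthy) application of Watson's lemma in $\nu$ and the Fourier asymptotics of $r^{2\a}e^{-r^2t}$ in $r$; the hypotheses on $c$ and $\e$ are exactly what is needed to make the contour deformation and the large-$x$ local analysis at $r=0$ compatible.
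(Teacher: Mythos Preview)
Your plan --- do the $\nu$-integral first by Watson's lemma and then extract the $|x|^{-(1+2\alpha)}$ tail from the $r^{2\alpha}$ singularity at $r=0$ --- is a different route from the paper's. The paper does \emph{not} separate the $\nu$- and $r$-integrals; it keeps the double integral intact and performs two successive contour rotations in the $r$-variable. A first small rotation by angle $\e$ converts $e^{ixr}$ into a factor with $e^{-xs\sin\e}$ and is the source of the $e^{-c^2r_0^2\cos(2\e)t}$ (from the arc at $|r|=cr_0$) and $e^{-\sqrt{x}\sin\e}$ (from the piece $s\geq x^{-1/2}$) errors. Then, after rescaling $\t s=xs$ on the piece $s\leq x^{-1/2}$, a second rotation all the way to the imaginary axis replaces $r$ by $is/x$: the only non-analytic ingredient $r^{2\alpha}$ becomes $(is/x)^{2\alpha}=e^{i\alpha\pi}(s/x)^{2\alpha}$, and computing $\mathrm{Im}\big(1/Q(\nu,is/x)\big)$ explicitly produces the factor $2\sin(\alpha\pi)\,(s/x)^{2\alpha}$ directly, with no Taylor expansion and no integration by parts. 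The $\nu$-integral is evaluated only at the end, at $r=0$, yielding $\Gamma(3/2)/\big((k+f'(0))^3 t^{3/2}\big)$.

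Against this, your sketch has one confusion and one genuine gap. First, there is no $e^{f'(0)t}$ in $J$: the integrand carries $e^{-r^2t}$ and nothing else; the factor $e^{f'(0)t}$ appears only when one passes from $\o u_1$ back to $\o u=e^{f'(0)t}\o u_1$, and the paper's own proof concludes with \eqref{ligneprecise}, which has no exponential. Your proposed ``contour shift'' cannot manufacture an $e^{f'(0)t}$ from $\int r^{2\alpha}e^{-r^2t+ixr}\,dr$, so that step is spurious (the $e^{f'(0)t}$ in the displayed statement is almost certainly a typo; note also $P(r^2,r)=r^{2\alpha}-r^2+k+f'(0)$, not what you wrote). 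Second, the step you yourself call ``the delicate part'' --- getting the smooth (non-$r^{2\alpha}$) part down to $O(x^{-3})$ by two integrations by parts --- does not work as written: $e^{-r^2t}$ equals $1$ at $r=0$, so the boundary terms do not vanish, and each IBP pulls down a factor of $t$ from $\partial_r e^{-r^2t}$, destroying uniformity in $t$. This is exactly the obstacle the paper's second rotation to the imaginary axis circumvents; without an analogous device your argument is incomplete at the crucial point.
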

\begin{proof}
  From Lemma \ref{I(r,t)}, we know that  for $r\in(0,cr_0)$ and $t>1$,
$$I_{\beta_{\t A}}(r,t)=\frac{2\mu e^{-r^2t}}{\pi}\int_0^{\infty}\frac{\sqrt \nu}{\abs{P(r^2+\nu,r)}^2}e^{-\nu  t} d\nu,
$$
\\
where $$P(r^2+\nu,r)=(-\nu-r^2+r^{2\a}+\mu+k+f'(0))(i\sqrt \nu +1)-\mu.$$
We define, for  $(\nu,z) \in\R_+ \times \C$ :
$$
Q(\nu, z)=(-\nu -z^2+z^{2\a}+k+f'(0))^2+\nu(-\nu -z^2+z^{2\a}+\mu+k+f'(0))^2,
$$
so that we have
$$
Q(\nu,r)=\abs{P(r^2+\nu,r)}^2 \quad \text{ for } \quad (\nu,r)\in \R_+ \times [0, cr_0].
$$
Thus, $J$ becomes

\begin{equation}\label{Jaestimer}
J(x,t):=\frac{2\mu }{\pi}\int_{0}^{cr_0}e^{-r^2t}e^{ixr} j( r,t)   dr,
\end{equation}
where
$$
j( r,t)=\int_0^{\infty}\frac{\sqrt \nu}{Q(\nu,r)}e^{-\nu  t} d\nu.
$$
\\
To estimate $J$, there are three steps. The first one consists in rotating the integration line by a small angle $\eps>0$. Then, we prove we can only keep values of $r$ close to $0$. Finally, we rotate the integration line up to $\frac{\pi}{2}$.
\\
\\
\textit{Step 1 :}  From our knowledge of $P$, there exists a small angle $\eps >0$ such that
\begin{equation}\label{eps}
\cos(2\eps)> \frac{f'(0)}{c^2r_0^2},
\end{equation}
and 
\begin{equation}\label{CP}\text{for all $\nu\geq 0$ and $z \in \{ z \in \C \ | \ \abs z \leq cr_0, \arg(z) \in [0,\eps]\}$} : \quad  \abs{Q (\nu,z)}\geq c_Q.
\end{equation}
We want to rotate the  integration line of $\eps$ in \eqref{Jaestimer}.
For all $t>1$, the function 
$$
z \mapsto e^{-z^2t} e^{ixz} j(z,t)
$$
is holomorphic on the same set as $Q$, that is to say on $\{ z \in \C \ | \  \abs z \leq cr_0, \ \arg(z) \in [0,\eps]\}$ if $\a \in [\frac{1}{2},1)$ and on $\{ z \in \C^{\star} \ | \  \abs z \leq cr_0, \arg(z) \in [0,\eps]\}$ if $\a \in (0,\frac{1}{2}]$. In this last case, we need to remove a neighborhood of zero when rotating the integration line. Choose $\delta \in (0,cr_0)$. On the small arc 
$\gamma_{\delta,\eps}=\{ \delta e^{i\theta}, \theta \in [0, \eps] \},
$ we have for $t>1$
$$
\int_{\gamma_{\delta,\eps}}\abs{ e^{-z^2t} e^{ixz} j(z,t)}dz\leq C\int_0^{\eps} e^{-\delta^2 \cos(2\theta)t} e^{-x\delta \sin(\theta)}\delta d \theta,
$$
where $C>0$ is a universal constant.
The right hand side tends to $0$ as $\delta$ tends to 0. 
Thus, the Cauchy formula leads to
\begin{equation}\label{Jm}
J(x,t)=\frac{2\mu }{\pi}(J_{1}(x,t)-J_{2}(x,t)),
\end{equation}
where
$$
J_{1}(x,t)=\int_{0}^{cr_0}e^{-s^2e^{2i\eps}t+i\e+ixs e^{i\eps}}j(se^{i\eps},t)  ds,\   J_{2}(x,t)=c r_0i\int_{0}^{\eps}e^{-c^2r_0^2e^{2i\theta}t+i\theta+ixcr_0e^{i\theta}} j(cr_0e^{i\theta},t)d\theta.
$$
The term $J_{ 2}$ decays exponentially in time :
\begin{equation}
\label{J2}
\abs{J_{ 2}(x,t)}\leq c r_0\int_{0}^{\eps}e^{-c^2r_0^2\cos(2\theta)t}e^{-xcr_0\sin(\theta)} \abs{j(cr_0e^{i\theta},t) } d\theta
\leq
\frac {C}{t^{\nicefrac{3}{2}}} e^{-c^2r_0^2\cos(2\eps)t},
\end{equation}
where $C>0$ is a universal constant linked to $c_Q$ defined in  \eqref{CP}. 
\\
\\
\textit{Step 2 :} We now treat  $J_{1}$. We cut it into two pieces in order to keep values of $s$ close to $0$. Let us  define, for $x>(cr_0)^{-2}$ and $t>1$ :
\begin{equation}\label{J_{m,1}^m(x,t)}
J_{1}^m(x,t):=\int_0^{x^{-\nicefrac{1}{2}}}e^{-s^2e^{2i\eps}t}e^{ixs e^{i\eps}}j(se^{i\eps},t)    e^{i\eps}ds \quad  \text{ and } 
\end{equation}

$$ J_{1}^r(x,t):=\int_{x^{-\nicefrac{1}{2}}}^{cr_0}e^{-s^2e^{2i\eps}t}e^{ixs e^{i\eps}} j(se^{i\eps},t)    e^{i\eps}ds,
$$
so that  $J_{1}(x,t)=J_{1}^m(x,t)+J_{1}^r(x,t).$
For $x>(cr_0)^{-2}$ and $t>1$, we have the estimate

\begin{eqnarray*}
\abs{J_{1}^r(x,t)}&\leq&C\int_{x^{-\nicefrac{1}{2}}}^{cr_0}e^{-xs \sin(\eps)}e^{-s^2\cos(2\eps)t}ds,
\end{eqnarray*}
where $C>0$ is once again linked to $c_Q$ defined in  \eqref{CP}. 
This implies that  $J_{1}^r(x,t)$ decays exponentially in $ x$ and, taking $C$ larger if necessary, for $x>(cr_0)^2$ and $t>1$:

\begin{equation}\label{Jm1r}
\abs{J_{1}^r(x,t)}\leq C e^{-\sqrt{x} \sin(\eps)} .
\end{equation}
\\
\\
\textit{Step 3 :} We prove that $J_{1}^m(x,t)$ decays like $x^{-(1+2\a)}$ for large values of $x$.  We turn the variable of integration into $\t s = x s$ to get, for $x>(cr_0)^2$ and $t>1$,
$$J_{1}^m(x,t)=\int_0^{x^{\nicefrac{1}{2}}}e^{-\frac{\t s^2}{x^2}e^{2i\eps}t}e^{i\t s e^{i\eps}} j(\t s x^{-1}e^{i\eps},t)   e^{i\eps}\frac{d\t s}{x}.$$
Keeping in mind that we want an estimate for large values of $x$, we cut $J_1^m$ as

\begin{eqnarray*}
J_{1}^m(x,t)&=&\int_0^{x^{\nicefrac{1}{2}}}e^{i\t s e^{i\eps}}j(\t s x^{-1}e^{i\eps},t)  e^{i\eps}\frac{d\t s}{x }+\int_0^{x^{\nicefrac{1}{2}}}(e^{-\frac{\t s^2}{x^2}e^{2i\eps}t}-1)e^{i\t s  e^{i\eps}} j(\t s x^{-1}e^{i\eps},t)   e^{i\eps}\frac{d\t s}{x}.
\end{eqnarray*}
The second term in the right hand side satisfies

\begin{equation}\label{reste1}
\abs{\int_0^{x^{\nicefrac{1}{2}}}(e^{-\frac{\t s^2}{x^2}e^{2i\eps}t}-1)e^{i\t s  e^{i\eps}} j(\t s x^{-1}e^{i\eps},t)  e^{i\eps}\frac{d\t s}{x}}\leq \frac{C }{x^3}\int_0^{+\infty}\t s^2e^{-\t s \sin(\eps)}d\t s,
\end{equation}
where $C>0$ is once again universal.
We have  to estimate

$$
\int_0^{x^{\nicefrac{1}{2}}}e^{i\t s e^{i\eps}}j(\t s x^{-1}e^{i\eps},t)   e^{i\eps}\frac{d\t s}{x}
$$
for large values of $x$ and $t>1$.  For all $\nu \in \R_+$, $Q(\nu,0) \neq 0$. Consequenlty, there exists $x_0\in(0,1)$ such that $Q$ does not vanish  in $\R_+\times B_{x_0}(0)$. Thus, for all $t>1$ and all $x^{-\nicefrac{1}{2}}<x_0$, the function 
$$
z \mapsto e^{iz} j(zx^{-1},t)
$$
is holomorphic on $\{z\in \C \ | \ \abs z \leq x^{\nicefrac{1}{2}}\} $ if $\a \in [\frac{1}{2},1)$ and on $\{z\in \C^{\star} \ | \ \abs z \leq x^{\nicefrac{1}{2}}\} $ if $\a \in (0,\frac{1}{2}]$. Let $\delta \in (0, 1)$. On the small arc 
$\gamma_{\delta}=\{ \delta e^{i\theta}, \theta \in [\eps,  \frac{\pi}{2}] \},
$ we have for $t>1$
$$
\int_{\gamma_{\delta}}\abs{  e^{iz} j(zx^{-1},t)}dz\leq C\int_{\eps}^{ \frac{\pi}{2}}e^{-\delta \sin(\theta)}\delta d \theta.
$$
The right hand side tends to $0$ as $\delta$ tends to 0. For $x > x_0^{-2}$, we can rotate the integration line up to $\frac{\pi}{2}$ and  the Cauchy formula leads to
\begin{eqnarray*}
\int_0^{x^{\nicefrac{1}{2}}}e^{i\t s  e^{i\eps}}j(\t s x^{-1}e^{i\eps},t)    e^{i\eps}\frac{d\t s}{x }=\int_0^{x^{\nicefrac{1}{2}}}e^{-s} j(is x^{-1},t)  i\frac{ds}{x }+\int_{\eps}^{\frac{\pi}{2}}e^{ix^{\nicefrac{1}{2}} e^{i\theta}}j(x^{\nicefrac{1}{2}}e^{i\theta},t)   ie^{i\theta}\frac{d\theta}{x^{\nicefrac{1}{2}}}.
\end{eqnarray*}
The second term in the right hand side satisfies
\begin{equation}\label{reste2}
\abs{\int_{\eps}^{\frac{\pi}{2}}e^{ix^{\nicefrac{1}{2}} e^{i\theta}}j(x^{\nicefrac{1}{2}}e^{i\theta},t)   ie^{i\theta}\frac{d\t s}{x^{\nicefrac{1}{2}} }}\leq C e^{-\sqrt{x}  \sin(\eps)},
\end{equation}
where $C>0$ is a universal constant.
It remains to estimate $\t J_1^m$ defined by
$$
\t J_1^m(x,t)=\int_0^{x^{\nicefrac{1}{2}}}e^{-s }  j(is x^{-1},t)  i\frac{ds}{x },
$$
where 
$$
j( is x^{-1},t)=\int_0^{\infty}\frac{\sqrt \nu}{Q(\nu,is x^{-1})}e^{-\nu  t} d\nu.
$$
Recall that we are interested in the real part of $\t J_1^m$. A simple computation gives

\begin{eqnarray*}
&\dis\abs{Q(\nu,is x^{-1})}^2\re\left(\frac{i}{Q(\nu,is x^{-1})}\right)=\abs{Q(\nu,is x^{-1})}^2\im\left(\frac{1}{Q(\nu,is x^{-1})}\right)\\
&=\dis2\frac{s^{2\a}}{x^{2\a}}\sin(\a\pi)((-\nu +s^{2}x^{-2}+k+f'(0)+\mu)(1+\nu)-\mu)+\frac{s^{4\a}}{x^{4\a}}\sin(2\a\pi)(1+\nu).
\end{eqnarray*}
The integral under study is

$$
\re(\t J_1^m(x,t))=\int_0^{x^{\nicefrac{1}{2}}}e^{-s } \int_0^{\infty}\im\left(\frac{1}{Q(\nu,is x^{-1})}\right)\sqrt \nu e^{-\nu  t} d\nu \frac{ds}{x }.
$$
With the dominated convergence theorem, we get

\begin{equation*}
\lim_{x\rightarrow +\infty} x^{1+2\a}\re(\t J_1^m(x,t))=2\int_0^{\infty}e^{-s }s^{2\a}\sin(\a\pi)h(\nu,t) ds,
\end{equation*}
where $h$ is defined by
$$
h(\nu,t)=\int_0^{\infty}\frac{(-\nu +k+f'(0)+\mu)(1+\nu)-\mu}{\abs{Q(\nu,0)}^2}\sqrt \nu e^{-\nu  t} d\nu 
\underset{t\rightarrow +\infty }\sim\frac{\Gamma(\nicefrac{3}{2})}{(k+f'(0))^3t^{\nicefrac{3}{2}}}.$$
This implies that 

\begin{equation} \label{ligneprecise}
\lim_{x\rightarrow +\infty}  t^{-\nicefrac{3}{2}} x^{1+2\a}\re(\t J_1^m(x,t))=\frac{4\a\sin(\a\pi)\Gamma(2\a)\Gamma(\nicefrac{3}{2})}{(k+f'(0))^3}.
\end{equation}
Finally with \eqref{Jm1r}, \eqref{reste1}, \eqref{reste2} and \eqref{ligneprecise}, we have the existence of a constant $x_1>\max(x_0^{-2},c r_0, (c r_0)^{-2})$ such that, for all $x>x_1$ and all $t>1$ :
$$
\abs{J_{1}^m(x,t)-\frac{4\a\sin(\a\pi)\Gamma(2\a)\Gamma(\nicefrac{3}{2})}{(k+f'(0))^3}}\leq C\left(e^{-\sqrt x\sin(\eps)}+\frac{1}{x^3}\right),
$$
with $C>0$ universal.
This estimate added to \eqref{Jm} and \eqref{J2} leads to the existence of a constant $C_2>0$ such that, for $x>x_1$ and $t>1$,
$$
\abs{J_2(x,t)}\leq C\left(e^{-c^2r_0^2\cos(2\eps)t}+e^{-\sqrt x \sin(\eps)}+\frac{1}{x^3}\right),
$$
which proves Lemma \ref{J}.

\end{proof}
\subsection{The remaining terms}
 Since $r_0 > \sqrt{f'(0)}$, the integral on $\{\abs \xi \geq r_0\}$ can be bounded from above by $e^{-r_0^2 t}$.  See \cite{ACC_these} once again.

\section{An auxiliary subsolution for the transport 
equation}\label{sec:auxiliary}
 Let us come back to our model \eqref{gdmodele3}. The following 
lemma provides a subsolution to a nonlinear transport equations with 
suitable exponential decay. In what follows, for $\lambda\in\R_+$, we set 
$v_\lambda(x):= \abs x^{-\lambda}.$

\begin{lemma}\label{phi}
 Let $g$ be a nonnegative function of class $\mathcal{C}^{\infty}(\R)$, with  
$g(0)=0$, $g'(0)>0$, and let $\sigma$ be a positive constant.
Then, for all $0<\gamma\leq\t\gamma:=g'(0)/\sigma$, the equation
\begin{equation}\label{equa}
-\gamma x\psi'(x)=g(\psi(x)),\quad x\in\R,
\end{equation}
admits a subsolution $\phi\leq1$ of class $\mathcal{C}^2(\R)$ and with the 
prescribed decay $\abs x^{-\sigma}$ as $\abs x\to\infty$.

More precisely, there exist some positive constants $\beta, A_1,A_2,\eps$ and 
$D$ such that
\begin{equation}\label{estimate}
\text{if } \abs x\geq A_2,\qquad
-\gamma x \phi'-g(\phi)\leq -\beta v_{\sigma+\eps}, \qquad -\phi ''\leq D 
v_{\sigma+\eps},\qquad  (-\partial_{xx})^{\a}\phi\leq D \phi,
\end{equation}
\begin{equation}\label{estimate2}
\text{if } \abs x\in[A_1, A_2],\qquad
-\gamma x \phi'-g(\phi)<0,
\end{equation}
\begin{equation}\label{estimate3}
\text{if } \abs x\leq A_1,\qquad\phi=\phi(A_1).
\end{equation}
\end{lemma}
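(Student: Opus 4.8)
The plan is to construct $\phi$ by hand, gluing together three pieces — a flat part near the origin, an explicit power-law tail, and a monotone $\mathcal C^2$ transition joining them — and then to check \eqref{estimate}, \eqref{estimate2}, \eqref{estimate3} by direct computation. The starting point is the remark that, near infinity, the genuine solutions of \eqref{equa} behave like $\abs x^{-g'(0)/\gamma}$, which decays \emph{at least} as fast as $\abs x^{-\sigma}$ since $\gamma\leq\t\gamma$; as we want a subsolution with the slower decay $\abs x^{-\sigma}$, the naive candidate for the tail is simply $\abs x^{-\sigma}$, but this is too crude in the critical case $\gamma=\t\gamma$. I would therefore take, for large $\abs x$,
\[
\phi(x)=\abs x^{-\sigma}-C_0\abs x^{-\sigma-\eps},\qquad 0<\eps<\sigma,
\]
with $C_0>0$ a large constant, and set $A_2:=\big(C_0(\sigma+\eps)/\sigma\big)^{1/\eps}$, which is exactly the point at which this formula attains its maximum. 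Then on $\{\abs x\geq A_2\}$ the function $\phi$ is positive, even, and decreasing in $\abs x$, with $\phi'(\pm A_2)=0$, $\phi(\pm A_2)=\tfrac{\eps}{\sigma+\eps}A_2^{-\sigma}$ and $\phi''(\pm A_2)=-\sigma\eps A_2^{-\sigma-2}<0$. On $\{\abs x\leq A_1\}$, for a fixed $A_1\in(0,A_2)$, I set $\phi\equiv\phi(A_2)/2$, and on $A_1\leq\abs x\leq A_2$ I interpolate by any even, $\mathcal C^2$, nondecreasing-in-$\abs x$ function matching values and first two derivatives at $\abs x=A_1$ and $\abs x=A_2$; such a function exists precisely because at $A_1$ one glues to a constant ($\phi'=\phi''=0$) while at $A_2$ the tail arrives with zero slope and negative curvature. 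By construction $\phi\in\mathcal C^2(\R)$, $0<\phi\leq\phi(A_2)<1$ for $C_0$ large, and $\phi(x)\sim\abs x^{-\sigma}$.

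Next I would verify the inequalities. On $\{\abs x\leq A_1\}$ there is nothing to do beyond $\phi\equiv\phi(A_1)$, so \eqref{estimate3} holds. On $A_1\leq\abs x\leq A_2$, monotonicity gives $x\phi'(x)\geq0$, whence $-\gamma x\phi'(x)\leq0<g(\phi(x))$ — the last inequality because $\phi(x)$ stays in a compact subinterval of $(0,\theta)$, $\theta$ being the first zero of $g$ — which is \eqref{estimate2}. On $\{\abs x\geq A_2\}$, writing $g(s)=g'(0)s+\rho(s)$ with $\abs{\rho(s)}\leq M s^2$ for small $s$, a direct computation gives, for $x\geq A_2$ (and by evenness for $x\leq -A_2$),
\[
-\gamma x\phi'-g(\phi)=\big(\gamma\sigma-g'(0)\big)\abs x^{-\sigma}+C_0\big(g'(0)-\gamma(\sigma+\eps)\big)\abs x^{-\sigma-\eps}-\rho(\phi).
\]
If $\gamma<\t\gamma$ the coefficient of $\abs x^{-\sigma}$ is negative and, using $\abs x^{-\sigma}\geq A_2^{\eps}\abs x^{-\sigma-\eps}$, the identity $A_2^{\eps}=C_0(\sigma+\eps)/\sigma$, and $\abs{\rho(\phi)}\leq M\abs x^{-2\sigma}\leq MA_2^{-(\sigma-\eps)}\abs x^{-\sigma-\eps}$, that term dominates the other two for $\eps$ small and $C_0$ large, producing $-\gamma x\phi'-g(\phi)\leq-\beta v_{\sigma+\eps}$. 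If $\gamma=\t\gamma$ the first term vanishes identically and the second equals $-C_0\gamma\eps\,\abs x^{-\sigma-\eps}$, which again beats $\abs{\rho(\phi)}$ for $C_0$ large, giving the same bound. The remaining two estimates in \eqref{estimate} are easy: $-\phi''=-\sigma(\sigma+1)\abs x^{-\sigma-2}+C_0(\sigma+\eps)(\sigma+\eps+1)\abs x^{-\sigma-\eps-2}\leq D v_{\sigma+\eps}$; and for $(-\partial_{xx})^{\a}\phi\leq D\phi$ one splits the defining integral at $\abs{y-x}=1$, bounds the inner part by $C\sup_{\abs{y-x}\leq1}\abs{\phi''(y)}\leq C\abs x^{-\sigma-2}\leq C\phi(x)$ (second-order Taylor expansion, the first-order part cancelling in the principal-value sense when $\a\geq1/2$), and bounds the outer part by $\phi(x)\int_{\abs z\geq1}\abs z^{-1-2\a}\,dz$.

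The one genuinely delicate point is the critical exponent $\gamma=\t\gamma$: there the principal term $(\gamma\sigma-g'(0))\abs x^{-\sigma}$ of $-\gamma x\phi'-g(\phi)$ vanishes for the naive tail $\abs x^{-\sigma}$, so one is forced to subtract the lower-order correction $C_0\abs x^{-\sigma-\eps}$ with $0<\eps<\sigma$, and then to arrange that $C_0$ can be taken large enough to dominate the quadratic remainder $\rho(\phi)$ while simultaneously keeping $0<\phi\leq1$ and remaining compatible with the $\mathcal C^2$ gluing. This last compatibility is exactly why I would pin $A_2$ to the maximum of the tail formula: then $\phi'(A_2)=0$, the transition on $[A_1,A_2]$ can be chosen monotone, and the transport term $-\gamma x\phi'$ is harmless throughout it. The subcritical case is softer, since there the linear term already has the right sign, but pushing a single construction through both regimes (and choosing $\eps$ small enough in terms of $g'(0)-\gamma\sigma$ when $\gamma<\t\gamma$) is what makes the bookkeeping slightly involved.
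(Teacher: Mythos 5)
Your construction is correct and rests on the same core ideas as the paper's: the tail $\abs x^{-\sigma}-C_0\abs x^{-\sigma-\eps}$ with $0<\eps<\sigma$, a constant plateau near the origin, a $\mathcal C^2$ transition, and the verification of the main inequality by linearizing $g$ at $0$ and letting the $\eps$-correction absorb the quadratic remainder (your bracket computation in fact shows the margin is $-C_0\eps g'(0)/\sigma\,\abs x^{-\sigma-\eps}$ uniformly for all $\gamma\leq\t\gamma$, so the extra smallness condition on $\eps$ in the subcritical case is not even needed). Where you genuinely diverge is in the gluing. The paper puts the plateau at the \emph{maximum} value of the tail formula, so that its $\phi$ is globally nonincreasing in $\abs x$, and inserts a concave, decreasing transition strictly between the critical point of the tail and a point inside its concavity region; you instead join the tail exactly at its critical point and climb up to it from a lower plateau, so your $\phi$ has a bump and is not monotone. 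Each choice trades one verification for another: your monotone-increasing transition makes \eqref{estimate2} immediate ($-\gamma x\phi'\leq 0<g(\phi)$), whereas the paper must argue that $r\mapsto\gamma r\chi'(r)$ is nonincreasing (concavity plus monotonicity of $\chi$) and invoke the monotonicity of $g$ near $0$; conversely, for the bound $(-\partial_{xx})^{\a}\phi\leq D\phi$ the paper simply cites the result of \cite{XCACCJMR} for $\mathcal C^2$, even, nonincreasing-in-$\abs x$ functions, which your non-monotone $\phi$ does not satisfy, so you are forced to prove it directly by splitting the singular integral — your two-region argument does work (near $\abs x=A_2$ the bound $\sup_{\abs{y-x}\leq1}\abs{\phi''(y)}\leq C\abs x^{-\sigma-2}$ should be read as ``$\leq C\phi(x)$ with a constant depending on the transition'', since the sup may fall in the interpolation zone, but $\phi$ is bounded below there). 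Since the downstream use of the lemma in Section 6 only invokes \eqref{estimate}--\eqref{estimate3}, positivity, the decay, and the subsolution property — never the monotonicity of $\phi$ itself — your variant is a fully admissible substitute.
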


\bigskip
\begin{proof}
Let $\delta\in(0,1)$ be such that
$g$  is increasing on $ (0, \delta)$ and, in addition,
\begin{equation}\label{conditiong}\forall s\in(0,\delta), \quad g(s)\geq 
g'(0)s+(g''(0)-1)s^{2}.
\end{equation}
%
%
A first attempt  to construct a subsolution satisfying the conditions 
stated in the lemma could be
$$\phi_1(x):=\begin{cases}      
v_\sigma(x)-Av_{\sigma+\eps}(x) & \text{if } \abs x\geq A_1,\\
 \dis\frac{1}{A_1^{\sigma}}-\frac{A}{A_1^{\sigma+\eps}}& \text{if } \abs x\leq 
A_1,
\end{cases}$$
with $A,A_1>0$ to be chosen. Requiring that $\phi_1\in\mathcal{C}^1(\R)$ yields
$$A_1= A^{1/\eps}\left(1+\frac{\eps}{\sigma}\right)^{1/\eps}.$$
With this choice, the function $\phi_1$ is positive and nonincreasing on 
$\R_+$. But, if $\a \geq 1/2$, it is not regular enough to 
yield an estimate of its fractional Laplacian. Consequently, we modify it to have 
a $\mathcal{C}^2(\R)$ function. This argument is, by the way, not so far from 
that of Silvestre in \cite{Sil2007} in the study of the regularity of solutions 
of integral equations. 
The function $\phi_1$ is concave for $A_1 \leq \abs x \leq A_3$, where
$$A_3:=A^{1/\eps}\left(1+\frac{\eps}{\sigma}\right)^{1/\eps}\left(1+\frac{\eps}{
\sigma+1}\right)^{1/\eps}>A_1,$$
otherwise it is convex.
\begin{center}
\begin{tikzpicture}
[samples=200]
[scale=0.8]
\draw[->] (-7,0) -- (7,0);
\draw (7,0) node[below] {$x$};
\draw [->] (0,-0.1) -- (0,5);
\draw (0,5) node[right] {$y$};
\draw [domain=2.5:6] plot (\x,{800/abs(\x)^4-1600/abs(\x)^5});
\draw [domain=-6:-2.5] plot (\x,{800/abs(\x)^4-1600/abs(\x)^5});
\draw (2.5, {800/2.5^4-1600/2.5^5}) -- (-2.5, {800/2.5^4-1600/2.5^5});
\draw[text width=5cm,text centered] (0,-1) node{Figure 3: Graph of $\phi_1$};
\draw (2.5,0.05) node[below] {$A_1$};
\draw [dotted] (2.5,0) -- (2.5,{800/2.5^4-1600/2.5^5});
\draw (3,0.05) node[below] {$A_3$};
\draw [dotted] (3,0) -- (3,{800/3^4-1600/3^5});
\end{tikzpicture}
\end{center}
We fix a constant $A_2 \in (A_1,A_3)$ and consider
$$\phi(x):=\begin{cases}      
v_\sigma(x)-Av_{\sigma+\eps}(x) & \text{if } \abs x\geq A_2,\\
\chi(\abs x)&\text{if } A_1< \abs x<A_2,\\
 \dis\frac{1}{A_1^{\sigma}}-\frac{A}{A_1^{\sigma+\eps}}= \phi_1(A_1)>0& \text{if } \abs x\leq A_1,
\end{cases}$$
where $\chi\in\mathcal{C}^2([A_1,A_2])$ is nonnegative, nonincreasing, concave 
on $(A_1,A_2)$ and chosen so that $\phi\in\mathcal{C}^2(\R)$.
%
First, we prove that $\phi$ is a subsolution to \eqref{equa} treating 
separately the different ranges of $|x|$.
\begin{itemize}[\small$\bullet$]
\item $\abs x\geq A_2\geq A^{\nicefrac{1}{\eps}}$.\\ 
For $A\geq\delta^{-\eps/\sigma}$, we have that
\begin{equation*}
0\leq\phi\leq \frac{1}{A_1^{\sigma}}-\frac{A}{A_1^{\sigma+\eps}}
\leq A_1^{-\sigma} \leq A^{-\sigma/\eps}
\leq\delta,
\end{equation*}
whence, by \eqref{conditiong}, $g(\phi)\geq g'(0)\phi+(g''(0)-1)\phi^2$.
This, together with $\gamma\sigma\leq g'(0)$, yields
\begin{eqnarray*}\label{chi}
-\gamma x\phi'-g(\phi)&\leq& 
\gamma\sigma\phi-A\gamma\eps v_{\sigma+\eps}-g'(0)\phi+(1-g''(0))\phi^2
\\
&\leq&
 (1-g''(0))v_\sigma^2-A\gamma\eps v_{\sigma+\eps}.
\end{eqnarray*}
Since $v_\sigma^2=v_{2\sigma}$, choosing $\eps<\sigma$, we have that 
the latter term is less than $-\beta v_{\sigma+\eps}$ for given $\beta>0$, 
provided $A$ is large enough. This proves the first estimate in 
\eqref{estimate}.

\item $\abs x \in (A_1,A_2)$.\\
Using the concavity and monotonicity of $\chi$ we see that the function $r 
\mapsto \gamma r 
\chi'(r)$ is nonincreasing in $(A_1,A_2)$. Hence, by the monotonicity of 
$g$ in $(0,\delta)$,
$$
-\gamma x\phi'(x)-g(\phi(x))= -\gamma x\chi'(\abs x)-g(\chi(\abs x))\leq -\gamma 
A_2\chi'(A_2)-g(\chi(A_2)), 
$$
which is less than or equal to $-\beta v_{\sigma+\eps}(A_2)$ by the previous 
case. This proves \eqref{estimate2}.

\item $\abs x \leq A_1$.\\
$-\gamma x\phi'(x)-g(\phi(x))= -g(\phi(A_1))\leq 0.$
\end{itemize}

Then, we prove the remaining two estimates of \eqref{estimate} for $\abs x \geq 
A_2$. As for the first one, we have that
$$
-\phi''(x)\leq (\sigma+\eps)(\sigma+\eps+1)Ax^{-2}v_{\sigma+\eps}(x)\leq
Dv_{\sigma+\eps}(x),$$
for some $D>0$.
The last estimate, concerning  $(-\partial_{xx})^{\a}\phi$, 
follows from the previous one, with a possibly smaller $D$. Indeed, as seen in 
\cite{XCACCJMR}, the function $\phi$, being of class $\mathcal{C}^2$, radially 
symmetric and nonincreasing in $\abs x$, satisfies 
$(-\partial_{xx})^{\a}(x)\phi\leq D' \phi(x)$ for some $D'>0$.
\end{proof}

\section{Lower bound close to the road}\label{sec:road}
Throughout this section, $(v,u)$ is as in Theorem 
\ref{mainthm2D} and 
$\gamma<\gamma_\star=f'(0)/(1+2\alpha)$. Namely, $(v,u)$ is a solution to
\eqref{gdmodele3} starting from  a   nonnegative, compactly supported initial 
condition $(v_0,u_0)\not\equiv(0,0)$.
The section is split into four  subsections. In the first, short one,  we 
estimate the solution of \eqref{gdmodele3} at time $t_0>0$ in strips of the 
form $\R\times [0,Y]$.  In the second one, we study a 
reduced model -  where we discard, from the full system \eqref{gdmodele3}, 
terms that seem to decay exponentially fast in time; we think that it will help 
in understanding the logic of the construction. The third subsection is 
devoted to the  construction of a subsolution to the full model, which is then used in
the following subsection to derive an auxiliary result to Theorems \ref{mainthm2D}
and \ref{propagationfield2} part 2.
\begin{e-proposition}\label{intermediate_road}
Under the assumptions of Theorem \ref{mainthm2D}, for all $\gamma<\gamma_{\star}$, there holds:
\begin{equation}\label{u>0}
\liminf_{t\to+\infty} \inf_{\abs x \leq e^{\gamma t}} u(x,t)>0,
\end{equation}
\begin{equation}\label{v>0}
\exists A>0,\quad
\liminf_{t\to+\infty} \inf_{\su{\abs x \leq e^{\gamma t}}{0\leq y\leq A}} 
v(x,y,t)>0.
\end{equation}
\end{e-proposition}

\subsection{Bounding from below the solution at positive time}
The next result provides a lower bound on the decay of solutions to the system 
with no reaction in the field (i.e., $f=0$).
\begin{lemma}\label{sousoldonneeinitiale}
Let $(p^v,p^u)$ be the solution to
\begin{equation}\label{solfondestim}
\left\{\begin{array}{rcll}
\partial_t{p^v}-\Delta p^v &=&0,&x \in\R, y >0, t>0, \\
\partial_t{p^u}+(-\partial _{xx})^{\a}p^u&=& -(\mu+k) p^u +p^v,& x \in \R,y=0, t>0,\\
- \partial_y{p^v}&=&\mu p^u - p^v, & x \in \R,y=0, t>0,\\
\end{array}\right.\end{equation}
completed with the initial data $p^v(\cdot,\cdot, 0)=0$ and $p^u(\cdot,0)=u_0$.
Then, for any constant $Y>0$, there exists $a>0$ such that
\begin{equation}\label{sousolbonnedecr}
\forall (x,y)\in\R\times[0,Y],\qquad p^v(x,y,2)\geq\frac{a}{1+\abs 
x^{1+2\a}},\quad   p^u(x,2)\geq\frac{a}{1+\abs x^{1+2\a}}.
\end{equation}
\end{lemma}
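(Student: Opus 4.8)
Both components have to inherit the heavy tail $\asymp|x|^{-(1+2\a)}$ of the one--dimensional fractional heat kernel: this tail is produced on the road and then transmitted to the field through the boundary coupling. I would start by noting that, \eqref{solfondestim} being a linear cooperative system, the comparison principle of the appendix yields $p^v,p^u\ge0$, while comparison with the constant supersolution $\bigl(\mu\norm{u_0}_\infty,\norm{u_0}_\infty\bigr)$ gives boundedness. Throughout, $u_0\not\equiv0$ (the case the lemma is meant for), and $\mathcal K_\a(\cdot,t)$ denotes the fundamental solution of $\partial_t+(-\partial_{xx})^\a$ on $\R$; I would invoke the classical two--sided bound $\mathcal K_\a(x,t)\asymp t\,(t^{1/(2\a)}+|x|)^{-(1+2\a)}$, which gives in particular $\mathcal K_\a(x,t)\asymp(1+|x|^{1+2\a})^{-1}$ uniformly for $t\in[1,2]$.

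\textbf{The road component.} Since $p^v\ge0$, the function $p^u$ is a supersolution of the scalar problem $\partial_tq+(-\partial_{xx})^\a q+(\mu+k)q=0$, $q(\cdot,0)=u_0$, whose solution is $q(x,t)=e^{-(\mu+k)t}(\mathcal K_\a(\cdot,t)*u_0)(x)$; by the maximum principle for the fractional heat equation, $p^u\ge q$. Convolving the lower bound for $\mathcal K_\a$ against the nonnegative, nontrivial, compactly supported $u_0$ then gives a constant $a_1>0$ with $p^u(x,t)\ge a_1(1+|x|^{1+2\a})^{-1}$ for all $x\in\R$, $t\in[1,2]$ (here one uses that $1+|x-z|^{1+2\a}\le C(1+|x|^{1+2\a})$ for $z$ in the support of $u_0$). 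This already contains the second estimate of \eqref{sousolbonnedecr}, at $t=2$.

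\textbf{The field component.} I would then feed this back into the boundary condition of \eqref{solfondestim}, rewritten as $-\partial_yp^v+p^v=\mu p^u$, which on $\{y=0\}\times(1,2)$ is $\ge\mu a_1(1+|\cdot|^{1+2\a})^{-1}=:g$; since moreover $p^v(\cdot,\cdot,1)\ge0$, the function $p^v$ dominates on $[1,2]$ the solution $w$ of the half--plane heat equation with the Robin condition $-\partial_yw+w=g$ and zero datum at $t=1$ (comparison principle for this local scalar problem). I would bound $w$ from below through its Duhamel representation against the explicit half--plane heat kernel, using that this kernel is positive and, for elapsed times in a fixed compact subinterval of $(0,+\infty)$ and $y\in[0,Y]$, bounded below by a fixed Gaussian in the $x$--variable; this produces $w(x,y,2)\ge a_2(1+|x|^{1+2\a})^{-1}$ on $\R\times[0,Y]$, the final step being the elementary fact that convolving the slowly varying tail $g$ with a fixed Gaussian alters it by at most a bounded factor (for $|x|$ bounded one can instead quote the strong maximum principle). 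With $a:=\min(a_1,a_2)$ this gives \eqref{sousolbonnedecr}.

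\textbf{Main obstacle.} The delicate step is the last one: transferring the polynomial tail from the road, where the diffusion is nonlocal, to the field, where it is local and therefore creates no such tail of its own --- one must check that parabolic smoothing over the fixed time interval, restricted to the fixed strip $\R\times[0,Y]$, neither destroys this tail nor generates a spurious, slower--decaying term. This is also why I would not look for a separable sub/supersolution of product type: such an ansatz, being forced to vanish at $t=1$ like $p^v$, cannot satisfy the parabolic differential inequality near $t=1$, so the estimate genuinely has to come from the Duhamel formula, the quantitative ingredient being the two--sided bounds for the half--plane heat kernel.
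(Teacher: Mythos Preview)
Your treatment of $p^u$ is the paper's: drop the nonnegative source $p^v|_{y=0}$, compare with $e^{-(\mu+k)t}\mathcal K_\a(\cdot,t)*u_0$, and read off the tail from the two-sided bound on the fractional heat kernel. For $p^v$ you take a different, more explicit route. The paper localizes: for each large $x_0$ it sets $w(x,y,t):=(1+|x_0|^{1+2\a})\,p^v(x_0+x,y,t)$, notes that the rescaled Robin datum $-\partial_y w+w$ at $y=0$ is bounded below on $\{|x|<1\}\times[1,3]$ by a constant $a_2>0$ independent of $x_0$, and then argues by contradiction using only the uniform $C^2$ regularity of $w$ in $y$ --- if $w(0,0,2)$ were small, the Robin condition would force $\partial_y w(0,0,2)\le -a_2/2$ and a Taylor expansion in $y$ would drive $w$ negative. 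No kernel is ever written down. You instead keep the problem global in $x$, compare $p^v$ with the half-plane Robin heat solution driven by the boundary source $g$, and bound that via Duhamel together with a lower Gaussian bound on the Robin Poisson kernel. Both approaches are correct; yours delivers the estimate on the full strip $[0,Y]$ in one stroke (the paper's contradiction argument, as written, pins down only $y=0$ and tacitly needs a further Harnack or strong-maximum-principle step to reach $y\in(0,Y]$), while the paper's soft argument sidesteps having to produce or estimate the Robin Poisson kernel at all --- precisely the technical point you flag as the main obstacle.
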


\bigskip
\begin{proof}
Remark \ref{RkA} in the Appendix ensures that, for all time $t>0$, the function 
$p^v$ is nonnegative on $\R\times \R_+$. Thus the second equation of 
\eqref{solfondestim} gives 
$$
\forall x\in\R,\ t>0,\quad
\partial_t{p^u}(x,t)+(-\partial _{xx})^{\a}p^u(x,t)+(\mu+k) p^u(x,t)\geq0.
$$
Let us denote by $p_{\a}$ the fundamental solution to the 1D fractional 
Laplacian; 
it is well known  that $p_{\a}$ decays like $\abs x^{-(1+2\a)}$ and a 
lower bound of $p_{\a}$ leads to the existence of a constant $a_1>0$, depending 
on $u_0$ and $\a$, such that for all $x\in \R$ and all $t>1$:
\begin{equation}\label{decaypu}
p^u(x,t)\geq e^{-(\mu+k)t}u_0\star p_{\a}(x,t)\geq a_1\frac{t^{-\nicefrac{1}{2\a}}e^{-(\mu+k)t}}{1+\abs x^{1+2\a}}.
\end{equation}
 Take $x_0$ large enough so that
$$\forall x\in(x_0-1,x_0+1),\quad
\frac{1+\abs{x}^{1+2\a}}{1+(1+\abs{x})^{1+2 \a}}\geq 
\frac{1}{2}.$$
Set
$$
w(x,y,t):=(1+\abs {x_0}^{1+2\a})p^v(x_0+x,y,t).
$$
We  have to prove  that, for any $Y>0$, there exists $a>0$ 
independent of $x_0$ (large enough) such that $w(0,y,2)\geq a$ for $y\in[0,Y]$.
  The boundary condition satisfied by ${p }^v$ in \eqref{solfondestim}, the 
estimate of ${p }^u$ in \eqref{decaypu} and the choice of $x_0$  give, for  
$|x-x_0|<1$ and $t>1$,
\begin{eqnarray*}
-\partial_y w(x,0,t)+w(x,0,t)&=&\mu (1+\abs {x_0}^{1+2\a}){p}^u(x,t)\\
& \geq&2^{-1}a_1t^{-\nicefrac{1}{2\a}}e^{-(\mu+k)t}.
\end{eqnarray*}
If $t\in [1,3]$, this is larger than some constant $a_2>0$. Thus, it easily 
follows from the uniform regularity of $w$ with respect to $y$ (which is the 
same as for $p^v$), that $y\mapsto w(0,y,2)$ cannot be arbitrarily close to 0 
at $y=0$ without becoming negative at some $y>0$, which is impossible.

\end{proof}

\subsection{A reduced model }
We are going to seek  a stationary strict subsolution to the following system:
\begin{equation}
\label{transportstat}
\left\{\begin{array}{rcll}
-\gamma \xi  \partial_{\xi}V- \partial_{yy}V  &=&f(V),&\xi \in\R, 0<y<L, \\
-\gamma \xi  \partial_{\xi}U &=& -\mu U +V- kU,& \xi \in \R,y=0, \\
-  \partial_{y}V&=&\mu U -V, & \xi \in \R,y=0.\\
\end{array}\right.\end{equation}
for a large constant $L$.
We want the subsolution to have the algebraic decay $\abs{\xi}^{-(1+2\a)}$ for 
large values of $\abs{\xi}$. Taking
\begin{equation}\label{L}
L>\pi \left(f'(0)-(1+2\a)\gamma\right)^{-1/2}=
\pi \left[(1+2\a)(\gamma_\star-\gamma)\right]^{-1/2},
\end{equation}
we see that $\gamma<(f'(0)-\pi^2/L^2)/(1+2\a)$ and hence Lemma \ref{phi} 
applies with
\begin{equation}\label{choixg}
g(s):=f(s)-\left(\frac{\pi}{L}\right)^2s, \qquad \sigma := 1+2\a,
\end{equation}
providing us with a function $\phi$ satisfying 
\eqref{estimate}-\eqref{estimate3} and decaying as $|x|^{-(1+2\a)}$.
Define
$$
\u V(\xi,y) =\left\{\begin{array}{ll} \phi(\xi) \dis\sin\left( \frac{\pi}{L} y+h\right)& \mbox{ if } 0\leq y<L\left(1-\frac{h}{\pi}\right)\\
0&\mbox{ if }  y\geq L\left(1-\frac{h}{\pi}\right)
\end{array},\qquad\u U (\xi)= C \phi(\xi) \right.,
$$
where $0<h<\pi$ and $C>0$ will be suitably chosen
in such a way that $(\u V,\u U)$ is a subsolution to 
\eqref{transportstat}.
We treat separately the three equations of the system.

\begin{itemize}[\small$\bullet$]
\item The first equation.\\  
The nontrivial case is $ 0<y<L(1-\frac{h}{\pi})$, and there holds
\begin{align*}
-\gamma \xi  \partial_{\xi}\u V - \partial_{yy}\u V -f(\u V)
&=\left(-\gamma \xi \phi ' +\left(\frac{\pi}{L}\right)^2\phi\right) 
\sin\left( \frac{\pi}{L} y+h\right)-f(\u V)\\
&=-\gamma \xi \phi '  \sin\left( \frac{\pi}{L} y+h\right)-g\left( \phi 
\sin\left( \frac{\pi}{L} y+h\right)\right)\\
&\leq(-\gamma \xi \phi ' -g( \phi)) \sin\left( \frac{\pi}{L} y+h\right),
\end{align*}
where the last inequality holds because $s \mapsto {g(s)}/{s}$ is decreasing.
Then, using the properties \eqref{estimate}-\eqref{estimate3}, we derive the 
existence of $\beta,\eps>0$ such that
\begin{equation}\label{LV<}
 -\gamma \xi  \partial_{\xi}\u V - \partial_{yy}\u V-f(\u V)\leq 
-\frac\beta{1+|\xi|^{1+2\a+\eps}}\sin\left( \frac{\pi}{L} y+h\right).
\end{equation}

\item The second  equation.\\
Since $-\gamma \xi \phi'\leq g(\phi)\leq g'(0)\phi$, we have that
\begin{align*}
-\gamma \xi \u U'+ (\mu +k) \u U  -\u V(\xi,0)&=C[-\gamma \xi 
\phi'+ (\mu+ k) \phi] -\phi \sin(h)\\
&\leq[C(g'(0) +\mu+k)- \sin(h)]\phi.
\end{align*}
Consequently, for $C=C(h)$ small enough,
\begin{equation}\label{LU<}
-\gamma \xi \u U'+ (\mu +k) \u U  -\u V(\xi,0)\leq-\beta\u U,
\end{equation}
for a possibly smaller $\beta$.

\item The third  equation.\\
There holds
\begin{equation}\label{Vy<}
-  \partial_{y}\u V (\xi,0)-\mu \u U(\xi)+\u 
V(\xi,0)=\left(-\frac{\pi}{L}\cos(h)-
 C \mu+ \sin(h)\right)\phi(\xi)\leq0,
\end{equation}
up to choosing $h<\arctan(\pi/L)$ and then $C=C(h)$ small 
enough.
\end{itemize}

%

\subsection{Subsolution to the full model \eqref{gdmodele3} }

The subsolution $( \u v , \u u )$ that we are 
going to construct is a modification of the pair $(\u V, \u U)$ defined in 
the previous subsection. We know from Lemma \ref{phi} that
$(\u V, \u U)$ decays as $|\xi|^{-1-2\alpha}$, and
by \eqref{estimate}, that there exists $D>0$ such that
\begin{equation}\label{V''}
 -\partial_{\xi\xi}\u V(\xi,y)\leq \frac D{1+|\xi|^{1+2\a+\eps}}\sin\left( 
\frac{\pi}{L} y+h\right),\qquad
(-\partial_{\xi\xi})^\a \u U(\xi)\leq D\u U(\xi).
\end{equation}
\begin{lemma}\label{sousolutionexplicite} For $B>0$ small enough, the 
couple $( \u v , \u u )$ defined by 
\begin{equation}\label{sousol}
\u v(x,y,t)= \u V (x b(t),y) \quad \mbox{ and }\quad \u u(x,t)= \u U (x b(t)),
\end{equation}
with $b(t)=Be^{-\gamma t}$, 
is a subsolution to \eqref{gdmodele3}.
\end{lemma}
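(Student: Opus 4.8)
The goal is to show that $(\u v, \u u)$ defined by $\u v(x,y,t) = \u V(xb(t),y)$, $\u u(x,t) = \u U(xb(t))$ with $b(t) = Be^{-\gamma t}$ is a subsolution to \eqref{gdmodele3}, provided $B$ is small. The strategy is to plug $(\u v, \u u)$ into each of the three lines of \eqref{gdmodele3} and reduce each inequality to the corresponding one already established for the stationary profile $(\u V, \u U)$ in the previous subsection, i.e.\ \eqref{LV<}, \eqref{LU<} and \eqref{Vy<}, plus the second-derivative and fractional-Laplacian control \eqref{V''}. The key observation is that, writing $\xi = xb(t)$, the time derivative produces exactly the transport term: $\partial_t \u v = \u V_\xi(\xi,y)\, x b'(t) = -\gamma\, \xi\, \u V_\xi(\xi,y)$ since $b'(t) = -\gamma b(t)$. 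So the ``parabolic'' operator applied to $\u v$ becomes the stationary transport operator of \eqref{transportstat} plus a genuinely small error coming from the diffusive terms, which carry compensating powers of $b(t)^2$ or $b(t)^{2\a}$.

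\textbf{Key steps.} First, for the field equation, compute
\[
\partial_t \u v - \Delta \u v - f(\u v)
= \big( -\gamma\, \xi\, \u V_\xi - \u V_{yy} - f(\u V)\big)(\xi,y) - b(t)^2\, \u V_{\xi\xi}(\xi,y).
\]
By \eqref{LV<} the bracket is bounded above by $-\beta\,(1+|\xi|^{1+2\a+\eps})^{-1}\sin(\tfrac{\pi}{L}y+h)$, and by \eqref{V''} the correction term satisfies $-b(t)^2\,\u V_{\xi\xi} \leq b(t)^2 D\,(1+|\xi|^{1+2\a+\eps})^{-1}\sin(\tfrac{\pi}{L}y+h) \leq B^2 D\,(1+|\xi|^{1+2\a+\eps})^{-1}\sin(\tfrac{\pi}{L}y+h)$ since $b(t) \leq B$. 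Hence for $B^2 D < \beta$ the whole expression is $\leq 0$. (On the region $y \geq L(1-h/\pi)$ both $\u v$ and its derivatives vanish and $f(0)=0$, so the inequality is trivial there; one must note that the kink of $\u V$ at $y = L(1-h/\pi)$ points the right way, i.e.\ $\u V_y$ jumps downward, so $-\u V_{yy}$ has the correct-sign distributional part, exactly as in the construction of \cite{XCACCJMR} / \cite{BRR2}.) Second, for the road equation, compute
\[
\partial_t \u u + (-\partial_{xx})^\a \u u + \mu \u u + k\u u - \u v\big|_{y=0}
= \big( -\gamma\,\xi\,\u U' + (\mu+k)\u U - \u V(\xi,0)\big) + b(t)^{2\a}\,(-\partial_{\xi\xi})^\a \u U(\xi),
\]
using the scaling identity $(-\partial_{xx})^\a[\u U(b\,\cdot)](x) = b^{2\a}\big((-\partial_{\xi\xi})^\a \u U\big)(bx)$. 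By \eqref{LU<} the first group is $\leq -\beta \u U$, and by \eqref{V''} the second is $\leq b(t)^{2\a} D\,\u U \leq B^{2\a} D\,\u U$; so for $B^{2\a} D < \beta$ this line is $\leq 0$. Third, for the boundary/flux equation,
\[
-\partial_y \u v(x,0,t) - \mu \u u(x,t) + \u v(x,0,t)
= \big( -\partial_y \u V(\xi,0) - \mu \u U(\xi) + \u V(\xi,0)\big) \leq 0
\]
directly by \eqref{Vy<}, with no error term at all since the $y$-derivative does not involve $b(t)$. Finally, choosing $B$ small enough that both $B^2 D < \beta$ and $B^{2\a} D < \beta$ hold simultaneously concludes the proof.

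\textbf{Main obstacle.} The computations themselves are routine once the scaling identities are in place; the only genuinely delicate point is the treatment of the corner of $\u V$ at $y = L(1-h/\pi)$ — one has to check that $\u v$ is a subsolution across that interface in the viscosity / distributional sense, i.e.\ that the downward jump in $\partial_y \u V$ contributes a nonnegative measure to $-\u V_{yy}$, so that pasting the nontrivial profile with the identically-zero profile there does not destroy the subsolution property. This is standard (it is the same device used in \cite{BRR2} and \cite{XCACCJMR}), but it is the step that requires care rather than bookkeeping. A secondary point worth stating explicitly is that the subsolution inequalities must be understood in the appropriate weak sense at $|x| = A_1 b(t)^{-1}$ and $|x| = A_2 b(t)^{-1}$ where $\u V$ and $\u U$ are only $C^2$ by construction (Lemma \ref{phi}), which is harmless since $\phi \in C^2(\R)$ and \eqref{estimate}, \eqref{estimate2}, \eqref{estimate3} cover every range of $|\xi|$.
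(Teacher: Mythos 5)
Your proposal is correct and follows essentially the same route as the paper: plug the self-similar ansatz into each of the three equations, use $b'=-\gamma b$ to recover the transport operator of \eqref{transportstat}, absorb the $b^2\u V_{\xi\xi}$ and $b^{2\a}(-\partial_{\xi\xi})^\a\u U$ errors via \eqref{V''} into the strict negativity margins $-\beta$ of \eqref{LV<}--\eqref{LU<}, and invoke \eqref{Vy<} for the flux condition. Your extra remark on the pasting at $y=L(1-h/\pi)$ is the standard generalized-subsolution device (note only that $\u V_y$ there jumps \emph{up} from $-\tfrac{\pi}{L}\phi$ to $0$, so $\u V_{yy}$ acquires a nonnegative Dirac part and $-\u V_{yy}$ a nonpositive one, which is indeed the right sign).
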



\begin{proof} Let us call
$$
\mathcal{L}_1(v)= \partial_{t}v-\Delta   v - f(  v), \qquad 
\mathcal{L}_2(v,u)= \partial_{t} u+(-\partial _{xx})^{\a} u +(\mu+k)  u -  
\gamma_0v.
$$
\begin{itemize}[\small$\bullet$]
\item In the field.\\
The nontrivial case is $ 0<y<L(1-\frac{h}{\pi})$, where, by
properties \eqref{LV<} and \eqref{V''}, we get
\[\begin{split}
\mathcal{L}_1(\u v)&=-\gamma xb(t)\partial_\xi\u V(xb(t),y)
-b^2(t)\partial_{\xi\xi}\u V(xb(t),y)-\partial_{yy}\u V(xb(t),y)
-f(\u V(xb(t),y))\\
&\leq\frac{-\beta+Db^2(t)}{1+|xb(t)|^{1+2\a+\eps}}\sin\left( \frac{\pi}{L} 
y+h\right).
\end{split}\]
Then, since $|b(t)|\leq B$, this term is negative for $B$ small enough.
%
%

\item On the road.

Using \eqref{LU<} and the second estimate in \eqref{V''} we derive
\[\begin{split}
\mathcal{L}_2(\u v, \u u)&= -\gamma xb(t)\u U'(xb(t))
+b^{2\a}(t)(-\partial_{\xi\xi})^{\a}\u U(xb(t))
 +(\mu+k)\u U(xb(t))\\
&\ \ \ \,-\u V(xb(t),0)\\
&\leq(-\beta+B^{2\a} D)\u U(xb(t))
  \end{split}\]
which is negative for $B$ small enough once again.

\item The boundary condition is an immediate consequence of \eqref{Vy<}.
\end{itemize}
This shows that $( \u v , \u u )$ is a subsolution to \eqref{gdmodele3}.
 \end{proof}

\subsection{Conclusion by comparison with the subsolution}\label{sec:fitting}

\begin{proof}[Proof of Proposition \ref{intermediate_road}]
We derive the lower bound close to the road by fitting the subsolution 
$(\u v,\u u)$ provided by Lemma \ref{sousolutionexplicite} below $(v,u)$ at 
time $t=2$. To do this, we make use of the pair $(p^v,p^u)$ from Lemma 
\ref{sousoldonneeinitiale}. Recall that, by construction,
$$
\u v(x,y,2)\leq \frac{C}{1+\abs{x}^{1+2\a}}\mathds{1}_{[0,L]}(y),\qquad 
\u u(x,2)\leq \frac{C}{1+\abs{x}^{1+2\a}}.
$$
for some $L,C>0$. 
Since the nonlinearity $f$ is nonnegative, the comparison 
principle of Theorem A.2 entails that $( v, u)$ is greater than $(p^v,p^u)$.
%
Applying Lemma \ref{sousoldonneeinitiale} with $Y=L$, we infer the existence 
of a constant $a>0$ such that for all $x \in \R$ and $y\in [0,L]$,
$$
v(x,y,2)\geq p^v(x,y,2)\geq \frac{a}{1+\abs{x}^{1+2\a}},\quad  \mbox{ and } \quad u(x,2)\geq p^u(x,2)\geq \frac{a}{1+\abs{x}^{1+2\a}}.
$$
We eventually infer that, at time $t=2$, $( v, u)$ is greater than 
$\varepsilon_0(\u v,\u u)$, provided $\e_0>0$ is small enough.
Notice that since   $s\mapsto \frac{f(s)}{s}$ is  decreasing, for 
$\eps_0\in(0,1)$ the couple $\varepsilon_0(\u v,\u u)$ is still  a 
subsolution to the problem \eqref{gdmodele3}. Therefore, 
choosing $\eps_0\in(0,1)$ sufficiently small, we can apply the comparison 
principle and obtain
$$
 \forall (x,y)\in \R\times \R_+,\ t\geq 2, \quad
v(x,y,t)\geq \eps_0 \u v(x,y,t),\quad u(x,t)\geq \eps_0 \u 
u(x,t).
$$
Finally, we know from Lemma \ref{sousolutionexplicite} that
$$\u v(x,y,t)=\u V(xBe^{-\gamma t},y),\qquad
\u u(x,t)=\u U(xBe^{-\gamma t}),$$ 
with $B>0$, $\u V$ positive in some strip $\R\times[0,A]$ and $\u U$ positive.
The proof of Proposition~\ref{intermediate_road} is thereby achieved.
\end{proof}

\section{Lower bound in the field}\label{longtimebehavior}

This section is dedicated to the proof of following weaker version of Theorem \ref{propagationfield2} part 2:
\begin{e-proposition}\label{intermediate_field}
Under the assumptions of Theorem \ref{mainthm2D}, there holds
$$
\forall \theta \in (0,\pi),\ 
0<c<{c_K}/{\sin(\theta)},\quad
\liminf_{t\to+\infty} \inf_{0\leq r \leq ct} 
v(r \cos(\theta), r \sin(\theta),t)>0.
$$
\end{e-proposition}

\begin{proof} 
As said in Section 2, the invasion on the road is exponential in time, whereas it cannot be more than 
linear in the field. Therefore a good model for it is the Dirichlet problem
$$
\partial_t v-\partial_{yy} v=f(v)\quad \text{for $y>0$},\ \ \ v(0,t)=1.
$$
Extend $v$ as a function of two spatial variables by $v(x,y,t):=v(y,t)$: this 
gives the desired propagation. 
In order to make this consideration rigorous, 
we first exploit the lower bound close to the road given by Proposition \ref{intermediate_road}, 
next we use standard arguments for the spreading in the field orthogonally to 
the road.


Let $(v,u)$ be as in Theorem \ref{mainthm2D}.
By \eqref{v>0}, for any $\gamma\in(0,\gamma_\star)$,
there exists $\delta,t_0,A>0$ such that
\begin{equation}\label{>delta}
\forall t\geq t_0,\ |x|\leq e^{\gamma t},\ 0\leq y\leq A,\quad
v(x,y,t)\geq\delta.
\end{equation}
We now forget the road and replace it with sudden death of the population, 
namely, with the Dirichlet boundary condition. We derive the following
\begin{lemma}\label{lem:D}
Let $w$ be the solution to the 
problem
\begin{equation}\label{Dirichlet}\begin{cases}
\partial_s w-\Delta w= f(w),& x\in\R,\ y>0,\ s>0, \\
w(x,0,s)=0,& x \in \R,\ s>0,
\end{cases}\end{equation}
starting from a nonnegative, compactly supported initial datum $w_0\not\equiv0$.
Then 
\begin{equation}\label{DKPP}
\forall 0<c<c_K,\ 
\lim_{s\to+\infty} w(x,y+cs,s)=1,\qquad
\forall c>c_K,\quad
\lim_{s\to+\infty}w(x,y+cs,s)=0, 
\end{equation}
locally uniformly in $(x,y)\in\R^2$. Moreover, for any $K\Subset\R$, 
$\rho>0$ and $0<c<c_K$,
\begin{equation}\label{inf>0}
\inf_{\su{s\geq\rho,\ x\in K}{\rho\leq y\leq 
cs}}v(x,y,s)>0.
\end{equation}
\end{lemma}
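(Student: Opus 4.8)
The plan is to reduce the statement of Lemma~\ref{lem:D} to the classical Aronson--Weinberger/KPP spreading result (Theorem~\ref{t2.0}) applied to a one-dimensional problem, using the Dirichlet condition at $y=0$ as a source of sub- and supersolutions. First I would prove \eqref{DKPP}. For the upper bound $c>c_K$, extend $w_0$ to all of $\R^2$ by an even reflection in $y$ and compare $w$ with the solution $\overline w$ of $\partial_s\overline w-\Delta\overline w=f(\overline w)$ in $\R^2$ started from that extended datum; since the Dirichlet condition $w(x,0,s)=0$ is $\le\overline w(x,0,s)$, the comparison principle gives $w\le\overline w$ on the half-plane, and then Theorem~\ref{t2.0} (which applies verbatim in $\R^2$ to $\overline w$, its radial spreading speed being $c_K$) yields $\overline w(x,y+cs,s)\to0$ locally uniformly for $c>c_K$, hence the same for $w$. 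For the lower bound $0<c<c_K$, the key point is that propagation in the $y$-direction away from the wall is governed by the one-dimensional equation $\partial_s W-\partial_{yy}W=f(W)$, $W(0,s)=0$, which admits a travelling front of speed $c_K$ invading $y=+\infty$: one fixes $c'\in(c,c_K)$, places under $w$ at some large time a small bump supported in a ball strictly inside $\{y>0\}$ (legitimate since $w>0$ there by the strong maximum principle), and uses the standard construction of a compactly-supported-in-$x$, front-like-in-$y$ subsolution moving at speed $c'$, exactly as in Aronson--Weinberger; letting $s\to+\infty$ and then $c'\to c_K$ gives $\liminf w(x,y+cs,s)\ge1$ (the matching upper bound $\le1$ coming from comparison with the constant $1$, a supersolution since $f\le0$ above $1$ can be arranged, or more simply with the ODE $\dot z=f(z)$ from $z(0)=\max w_0$).

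The second statement \eqref{inf>0} is then essentially a uniform version of the lower bound already obtained: for fixed $K\Subset\R$, $\rho>0$ and $0<c<c_K$, one wants $\inf\{v(x,y,s):s\ge\rho,\ x\in K,\ \rho\le y\le cs\}>0$. I would pick $c'\in(c,c_K)$ and a single compactly supported subsolution $\underline w$ to \eqref{Dirichlet} as above whose $y$-level sets advance at speed $c'$; by parabolic regularity and the strong maximum principle there is a time $s_1$ and $\eta>0$ with $w(\cdot,\cdot,s_1)\ge\eta\underline w(\cdot,\cdot,s_1)$ (after a translation bringing $\underline w$'s support inside $\{y>0\}$), hence $w\ge\eta\underline w$ for all $s\ge s_1$ by comparison (using that $\eta\underline w$ remains a subsolution since $s\mapsto f(s)/s$ is decreasing). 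Since $\underline w(x,y,s)$ is bounded below by a positive constant on $\{x\in K,\ \rho\le y\le cs\}$ for $s$ large — because $cs$ stays strictly behind the front position $\approx c's$ — this handles large $s$; the compact range $\rho\le s\le s_2$ is covered by a separate strong-maximum-principle argument giving a uniform positive lower bound on the compact set $\{\rho\le s\le s_2,\ x\in K,\ \rho\le y\le cs_2\}$. Note the statement is written for $v$, not $w$; but $v$ is the solution of \eqref{gdmodele3}, and by \eqref{>delta} together with the comparison principle of Theorem~A.2, $v$ dominates the solution of \eqref{Dirichlet} launched at time $t_0$ from a datum below $v(\cdot,\cdot,t_0)$ and supported in the strip where $v\ge\delta$ — so the bounds just proved for $w$ transfer to $v$ after a harmless time shift, which is exactly how the lemma will be used in the proof of Proposition~\ref{intermediate_field}.

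The main obstacle I expect is not any single estimate but the bookkeeping of the three scales involved: the road feeds a lower bound $v\ge\delta$ only on the \emph{moving} strip $|x|\le e^{\gamma t}$, $0\le y\le A$ of \eqref{>delta}, whereas the Dirichlet comparison \eqref{Dirichlet} lives in a fixed half-plane with a stationary zero boundary. Making the Aronson--Weinberger subsolution ``catch'' this exponentially widening reservoir while still only claiming linear-in-$s$ propagation in $y$ requires choosing the subsolution's $x$-support to grow (sub-exponentially, e.g.\ linearly) with $s$, and checking that the fattening of the support does not destroy the subsolution property — this is routine for the Laplacian but must be written carefully. A secondary technical point is the passage $c'\to c_K$: one must argue the lower bound is in fact $1$ and not merely some $\delta'<1$, which follows because the true spreading speed of the 1D Dirichlet problem is $c_K$ (the Dirichlet condition costs nothing asymptotically, only a finite shift), so for any $c<c_K$ the constant $1$ is reached in the limit; invoking Theorem~\ref{t2.0} in the reflected variable, or a direct phase-plane argument for the 1D front, settles it.
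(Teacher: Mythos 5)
Your overall strategy for \eqref{DKPP} is the paper's: the $c>c_K$ half is the same whole-plane comparison with Aronson--Weinberger, and for $c<c_K$ both you and the paper fit a compactly supported subsolution travelling at speed $c'<c_K$ into the half-plane under $w$ at a large time. But two steps, as you have written them, have genuine gaps. First, your argument for \eqref{inf>0} does not work with a \emph{single} travelling subsolution: a compactly-supported subsolution whose level sets advance at speed $c'$ is positive only in a ball of fixed radius around the height $\approx c's$, so it gives no lower bound on the bulk of the rectangle $\{\rho\le y\le cs\}$ far behind the front. Your sentence ``$\u w(x,y,s)$ is bounded below by a positive constant on $\{x\in K,\ \rho\le y\le cs\}$ because $cs$ stays strictly behind the front position'' is exactly where this fails. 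What would save it is a subsolution that is genuinely front-like in $y$ (order one uniformly behind the front), but such an object with compact $x$-support and Dirichlet compatibility at $y=0$ is not what the Aronson--Weinberger construction delivers, and building it is not routine. The paper avoids this entirely: having proved the first limit in \eqref{DKPP}, it plants a \emph{stationary} compactly supported subsolution (a normalized principal Dirichlet eigenfunction $\varphi_0$ of $-\Delta$ in $B_R$) at height $\eta$ at the time $s=\eta/c$ when the front arrives there; the comparison principle then keeps $w$ above $\varphi_0$ at that height for \emph{all later times}, and sweeping $\eta$ over $[cs_0,c\tau]$ fills the rectangle. You need either this ``plant-and-freeze'' device or an honest front-like subsolution.

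Second, the upgrade from a positive $\liminf$ to $\lim=1$ in \eqref{DKPP} is not settled by your appeal to ``the true spreading speed of the 1D Dirichlet problem'' or to Theorem \ref{t2.0} ``in the reflected variable'': even reflection is incompatible with the Dirichlet condition, and the 1D half-line solution is not a subsolution of the 2D problem because the initial data are not ordered (the 2D datum is also compactly supported in $x$). The paper's mechanism is to work in the frame moving at speed $c$, where the relevant subsolution is the principal eigenfunction of $-\Delta-c\partial_y$ in $B_r$ (whose eigenvalue tends to $c^2/4<f'(0)$ as $r\to\infty$); being a \emph{stationary generalized} subsolution there, the solution emanating from it is increasing in $s$ and converges to a bounded positive entire solution of $-\Delta W-c\partial_y W=f(W)$ in $\R^2$, which is $\equiv1$ by a Liouville theorem. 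Some such monotonicity-plus-Liouville (or a bootstrap through the ODE $\dot z=f(z)$ on slightly smaller expanding sets) must be supplied. Finally, two smaller points: the ``exponentially widening reservoir'' issue you flag belongs to the proof of Proposition \ref{intermediate_field}, not to Lemma \ref{lem:D}, and is handled there by $x$-translation invariance rather than by growing the subsolution's $x$-support; and the $v$ in \eqref{inf>0} is a typo for $w$, so no transfer from $w$ to $v$ is needed inside the lemma.
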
 

It is also possible to show that $w$ converges locally uniformly to the unique 
positive bounded solution $W$ to the ODE $-W''(y)=f(W(y))$ for $y>0$, such that 
$W(0)=0$. Let us postpone the proof of this lemma until the end of this section and 
continue with the proof of the proposition.
We actually derive a stronger result: the 
uniform lower bound in rectangles expanding with any speed less than 
$c_K$ in the vertical direction, and exponentially fast
in the horizontal one (cf.~Figure 4).
\begin{center}
\begin{tikzpicture}
\draw (0,0) -- (2.5,0.8);
\draw (-3,0) -- (-3,0.8) -- (3,0.8) -- (3,0);
\draw[->] (-3.5,0) -- (4,0);
\draw (-0.7,0.7) node[above] {$\scriptstyle{c\sin(\theta) t}$};
\draw (3,0) node[below] {$e^{\gamma\eps t}$};
\draw (-3,0) node[below] {$-e^{\gamma\eps t}$};
\draw (4,0) node[below] {$x$};
\draw [->] (0,-0.5) -- (0,1.5);
\draw (0,1.5) node[right] {$y$};
\draw (1,0) arc (0:18:1);
\draw (1.3,0.5) node[below] {$\theta$};
\draw[text width=7cm,text centered] (0,-1) node{Figure 4: Expanding rectangle};
\end{tikzpicture}
\end{center}

Fix $c\in(0,c_K)$, $\theta\in(0,\pi)$ and 
$\eps\in(0,1)$. Let $t_1\geq t_0/\eps$ be such that
$$\forall t\geq t_1,\quad e^{\gamma\eps t}\geq c\frac{|\cos(\theta)|}
{\sin(\theta)} t+1.$$
It follows from \eqref{>delta} that, for all $t\geq t_1$, there holds 
\begin{equation}\label{rectangle}
\forall |x_0|\leq e^{\gamma\eps t}-1,\ |x|\leq1,\ y\in[0,A],\quad
v(x_0+x,y,\eps t)\geq\delta.
\end{equation}
Consider $w$ the solution to \eqref{DKPP} with initial datum
$w(x,y,0)=\delta\mathds{1}_{(-1,1)\times(0,A)}(x,y)$. Since $v$ is a 
supersolution to \eqref{DKPP}, the comparison principle yields
$$\forall t\geq t_1,\ 
|x_0|\leq e^{\gamma\eps t}-1,\ 
x\in\R,\ y\geq0,\quad
v(x_0+x,y,t)\geq w(x,y,(1-\eps)t).$$
Applying the estimate \eqref{inf>0} from Lemma \ref{lem:D} with $x=0$ and 
$s=(1-\eps)t$, we derive
$$\inf_{\su{t\geq t_1,\ 
|x_0|\leq e^{\gamma\eps t}-1}{\rho\leq 
y\leq(1-\eps)ct}}
v(x_0,y,t)>0,$$
for any $\rho>0$. This estimate actually holds true up to $\rho=0$ thanks to 
\eqref{rectangle}. 
Notice that, for $0\leq 
r\leq\frac{c}{\sin(\theta)}(1-\eps)t$, we have $|r\cos(\theta)|\leq 
e^{\gamma\eps t}-1$ by the choice of $t_1$, and $r\sin(\theta)\leq c(1-\eps)t$.
The proof of Proposition \ref{intermediate_field} is thereby 
complete owing to the arbitrariness of $c\in(0,c_K)$ and $\eps\in(0,1)$.
\end{proof}

%
%
%
%

\begin{proof}[Proof of Lemma \ref{lem:D}]
The second limit in \eqref{DKPP} follows immediately from the spreading 
result of \cite{AW,kolmo}, because solutions to \eqref{Dirichlet} are subsolutions
of the problem in the whole plane. For the same reason we know that 
$\limsup_{s\to+\infty} w(x,y,s)\leq1$ uniformly in $(x,y)\in\R\times\R_+$.
Let us deal with the fist limit in \eqref{DKPP}.
Fix $c\in(0, c_K)$ and cast the problem in 
the frame moving vertically with speed $c$. That is, consider the problem for 
$\t w(x,y,s):=w(x,y+cs,s)$:
\begin{equation}\label{cDirichlet}\begin{cases}
\partial_s \t w-\Delta \t w-c\partial_y \t w= f(\t w),& x\in\R,\ y>-cs,\ s>0, \\
\t w(x,-cs,s)=0,& x \in \R,\ s>0.
\end{cases}\end{equation}
We need to prove that $\liminf_{s\to+\infty} \t w(x,y,s)\geq1$ locally 
uniformly in $(x,y)\in\R\times\R_+$. 
Let $\lambda_c(r)$ be the principal eigenvalue of the operator 
$-\Delta-c\partial_y$ in the two-dimensional ball $B_r$, under
Dirichlet boundary condition, and $\varphi_c$ be the associated positive 
eigenfunction.  This operator can be reduced to a self-adjoint one
by multiplying the functions on which it acts by 
$\exp(-(c/2)y)$. This reveals that
$\lambda_c(r)-c^2/4=\lambda_0(r)$, the principal eigenvalue of 
$-\Delta$ in $B_r$. Hence
$$\lim_{r\to\infty}\lambda(r)=
\lim_{r\to\infty}\lambda_0(r)+\frac{c^2}{4}=\frac{c^2}{4}<f'(0).$$
There is then $r>0$ such that $f(s)\geq\lambda(r)s$ for $s>0$
small enough, and  we can therefore normalize the principal eigenfunction 
$\varphi_c$ in such a way that, for all $\kappa\in[0,1]$, $\kappa\varphi_c$ is 
a stationary subsolution to \eqref{cDirichlet} for 
$(x,y)\in B_r$ and $s>r/c$. 
%
%
%
%
Moreover, for given $s_0>r/c$, up to reducing $\kappa$ if 
need be, $\kappa\varphi_c$ lies below the function $\t w$ at a time $s=s_0$, 
the latter being positive by the parabolic strong maximum principle. 
Let $\u w$ be the solution to \eqref{cDirichlet} emerging at time $s=s_0$ from 
the datum $\kappa\varphi_c$ extended by $0$ outside $B_r$, which is a 
generalized subsolution. It follows that $\u w$ is increasing in $s$ and 
converges, as $s\to+\infty$, locally 
uniformly to a positive bounded solution $\u W$ of 
$$-\Delta \u W -c\partial_y 
\u W= f(\u W)\qquad\text{in }\R^2.$$
Then, from one hand, $W\equiv1$ by the Liouville-type result of \cite{BHN}, 
Proposition 1.14. From the other, the comparison principle yields $\u w\leq\t 
w$ for $s\geq s_0$. We eventually derive that $\t w\to1$ as $s\to+\infty$.
%

We finally turn to \eqref{inf>0}. From the above arguments we know that, for 
$R>0$ large enough, the principal eigenfunction $\varphi_0$ of
$-\Delta$ in $B_R$ satisfies $-\Delta\varphi_0\leq f(\varphi_0)$.
By the fist limit in \eqref{DKPP}
we have that, up to renormalizing $\varphi_0$ if need be,
for given $c\in(0, c_K)$,
there exists $s_0>R/c$ such that
$$\forall s\geq s_0,\ (x,y)\in B_R,\quad
w(x,y+cs,s)\geq\varphi_0(x,y).$$
Hence, by the comparison principle,
$$\forall s\geq s_0,\ t\geq0,\ (x,y)\in B_R,\quad
w(x,y+cs,s+t)\geq\varphi_0(x,y).$$
Consider $\tau\geq s_0$ and $cs_0\leq\eta\leq c\tau$; applying the previous 
inequality with $s=\eta/c\;(\geq s_0)$, $t=\tau-\eta/c\;(\geq0)$, $y=0$, we 
infer that
$$\inf_{\su{\tau\geq s_0,\ |x|\leq R/2}{cs_0\leq\eta\leq c\tau}}
w(x,\eta,\tau)\geq\inf_{|x|\leq R/2}\varphi_0(x,0)>0.$$
By the parabolic strong maximum principle, in the above infimum, 
$\tau\geq s_0$ can be replaced by $\tau\geq\rho$, for any given $\rho>0$.
We can further get a positive lower bound for $w$ on the set $\tau\geq\rho$, 
$|x|\leq R/2$, $\rho\leq\eta\leq cs_0$ by comparison with the function 
$\varphi_0(x,y-R-\rho/2)$, suitably normalized. Namely, it holds true that
$$\inf_{\su{\tau\geq\rho,\ |x|\leq R/2}{\rho\leq\eta\leq c\tau}}
w(x,\eta,\tau)>0,$$
from which \eqref{inf>0} eventually follows by a covering argument, owing to 
the invariance of the problem by $x$-translations.
\end{proof}

\section{Convergence to the steady state in the invasion set}
\label{sec:stronger}

In this section, we put together the previous results and derive 
Theorems \ref{mainthm2D} and \ref{propagationfield2}.

\begin{proof}[Proof of Theorems \ref{mainthm2D} and \ref{propagationfield2} part 1] 
We start with Theorem \ref{mainthm2D}. Let $(v,u)$ be as there.
The pair $( \o v,\o u)$ given by Theorem \ref{t4.3} is 
nonnegative by the comparison principle given in the Appendix, and thus it
is a supersolution to \eqref{gdmodele3} because $f$ is concave. The same 
holds true for $K( \o v,\o u)$, for any $K>0$. We choose $K$ large enough so 
that, at time $t=1$, $K( \o v,\o u)$ is above the compactly supported 
initial datum of $(v,u)$. This is possible because $\o v$, $\o 
u$ are strictly positive at any time $t>0$, as is readily seen by applying 
the parabolic strong maximum principle and Hopf's lemma to derive the 
positivity of $\o v$, and next using the equation for $\o u$. Therefore, by comparison, 
$(v,u)$ lies below 
$K( \o v(\cdot,\cdot+1),\o
u(\cdot,\cdot+1))$ for all times and thus Theorem~\ref{mainthm2D} part 1 
follows from Theorem \ref{t4.3}.

We now turn to Theorem \ref{propagationfield2}.
Let $\o f$ be a concave function vanishing at $0$ and at $\norm{v}_\infty+1$ 
and 
such that 
$$\o f\geq f\quad\text{in }(0,\norm{v}_\infty+1),\qquad \o f'(0)=f'(0).$$ 
Consider the solution to 
\begin{equation*}\label{of}
\partial_t{\o v} - \partial_{yy}\o v =\o f(\o v),\quad y\in\R,\ t>0,
\end{equation*}
starting from any positive bounded initial datum.
We know from Aronson and Weinberger \cite{AW2} (or even \cite{kolmo}) that 
$\o v$ spreads at speed $c_K=2\sqrt{f'(0)}$, that is,
\begin{equation}\label{oKPP}
\forall c<c_K,\quad
\lim_{t\to+\infty}\inf_{|y|\leq  ct}\o v(y,t)=\norm{v}_\infty+1,\qquad
\forall c>c_K,\quad
\lim_{t\to+\infty}\sup_{|y|\geq  ct}\o v(y,t)=0. 
\end{equation}
From the first property (recalling that $v_0$ is compactly supported)
we deduce in particular the existence of $t_0>0$ such that
$$\forall y\in\R,\quad \o v(y,t_0)\geq v_0(y),\qquad
\forall t\geq t_0,\quad v(0,t)\geq\norm{v}_\infty.$$
It follows that $\o 
v(y,t_0+t)>v(x,y,t)$ at $t=0$ and also for $t\geq 0$, $x\in\R$, $y=0$. Namely, 
$v$ and $\o v(y,t_0+t)$ are respectively a solution and a supersolution to 
$\partial_t-\Delta=f$ in $(x,y,t)\in\R\times\R_+\times\R_+$, which are ordered
at $t=0$ and at $y=0$. The comparison principle eventually yields
$$\forall (x,y)\in\R\times\R_+,\ t\geq 0,\quad 
v(x,y,t)\leq \o v(y,t_0+t).$$
Statement 1 of Theorem \ref{propagationfield2} then follows from the second 
condition in \eqref{oKPP}.
\end{proof}

\begin{proof}[Proof of Theorems \ref{mainthm2D} and 
\ref{propagationfield2} part 2]
Fix $\gamma\in(0,\gamma_\star)$ and let $(x_\tau)_{\tau>0}$ in $\R$ and 
$(y_\tau)_{\tau>0}$ in $\R_+$ be such that 
$$|x_\tau|\leq e^{\gamma\tau},\quad (y_\tau)_{\tau>0}\text{ \ is bounded}.$$
It follows from the estimates in the Appendix that, as $\tau\to+\infty$, the 
family
$$(v(\cdot+x_\tau,\cdot,\cdot+\tau),u(\cdot+x_\tau,\cdot+\tau))_{\tau>0}$$
converges (up to subsequences) locally uniformly to a bounded solution $(\t 
v,\t u)$ of \eqref{gdmodele3} for all $t\in\R$.
We claim that $(\t v,\t u)$ coincides with $(V_s,U_s)$, the unique 
positive bounded stationary 
solution. This would yield Theorem \ref{mainthm2D} part 2.
From \eqref{u>0} in Proposition \ref{intermediate_road} (applied with a 
slightly larger $\gamma$) we deduce that 
$$m:=\inf_{(x,t)\in\R^2}\t u(x,t)>0.$$
Then, on the one hand, by Theorem \ref{cvstat},
the solutions $(V_1,U_1)$ and $(V_2,U_2)$ to \eqref{gdmodele3}, with initial 
datum $(0,m)$ and $(\|\t v\|_\infty,\|\t u\|_\infty)$ respectively, tend 
locally uniformly to $(V_s,U_s)$ as $t\to+\infty$.
On the other, by comparison with $(\t v(\cdot,\cdot,\cdot-n),\t u(\cdot,\cdot-n))$, 
for all $(x,y)\in\R\times\R_+$ and $n\in\N$, we have:
$$\forall t\geq0,\quad
(V_1(x,y,t),U_1(x,t))\leq
(\t v(x,y,t-n),\t u(x,t-n))\leq (V_2(x,y,t),U_2(x,t)),$$
whence, calling $s=t-n$ and letting $n\to+\infty$, we derive
$$\forall s\in\R,\quad (\t v(x,y,s),\t u(x,s))=(V_s(y),U_s),$$
that was our claim.

It remains to prove Theorem \ref{propagationfield2} part 1.
Fix $\theta\in(0,\pi)$, $c\in(0,c_K)$ and consider a 
family $(r_\tau)_{\tau>0}$ such that $0\leq r_\tau\in\leq c\tau$. We need to 
show that 
$$v(r_\tau\cos(\theta),r_\tau\sin(\theta),
\tau)-V_s(r_\tau\sin(\theta))\to0\quad\text{as }\ \tau\to+\infty.$$
This has been done above for sequences of $\tau$ for which 
$r_\tau$ is bounded, because $x_\tau:=r_\tau\cos(\theta)\leq  
e^{\gamma\tau}$, for any $\gamma>0$ and $\tau$ large enough, and 
$y_\tau:=r_\tau\sin(\theta)$ is bounded. Consider the case $r_\tau\to+\infty$ 
as $\tau\to+\infty$. The sequence of translations 
$(v(\cdot+r_\tau\cos(\theta),\cdot+r_\tau\sin(\theta),\cdot+\tau)$ converges 
(up to subsequences) locally uniformly as $\tau\to+\infty$ to a bounded 
function $\t v$ satisfying
$$\partial_t \t v-\Delta \t v =f(\t v),\quad (x,y)\in\R^2,\ t\in\R.$$
Furthermore, applying Proposition \ref{intermediate_field} 
with a slightly larger $c$ and values of $\theta$ in a neighbourhood of the 
fixed one, we infer that $\inf \t v>0$. Thus, by comparison with 
solutions of the ODE $V'=f(V)$, one readily gets $\t v\equiv1$, which concludes 
the proof of Theorem \ref{propagationfield2} part 2 because $V_s(+\infty)=1$.
\end{proof}
\section*{Acknowledgements}
The research leading to these results has received funding from the European Research Council
under the European Union's Seventh Framework Programme (FP/2007-2013) / ERC Grant
Agreement n. 321186 - ReaDi - ``Reaction-Diffusion Equations, Propagation and Modelling'' held by Henri Berestycki. 
This work was also partially supported by the French National Research Agency 
(ANR), within the project NONLOCAL ANR-14-CE25-0013, and by the Italian GNAMPA 
- INdAM. 

{\addcontentsline{toc}{section}{References}
\footnotesize{\bibliography{ref} 
}}
\eject

\setcounter{section}{1}
\setcounter{theorem}{0}
\setcounter{equation}{0}
\setcounter{figure}{0}

\renewcommand{\thesection}{\Alph{section}}
\renewcommand{\thetheorem}{\thesection.\arabic{theorem}}

\addcontentsline{toc}{section}{Appendix}
\section*{Appendix:  Cauchy Problem and comparison principle}
We choose to prove existence, uniqueness and regularity to the Cauchy Problem for 
\eqref{gdmodele3}  by the theory of sectorial operators and 
abstract theory of semilinear equations, as exposed in Henry \cite{Henry}. 
Comparison is then proved  by standard integration by parts. We point out that 
this is not the only way, we could also use viscosity solutions theory. This 
would not, however, yield a significantly shorter study. 
\subsection{Existence, uniqueness, regularity}\label{existence}
We work in the Hilbert space $X=\left\{( v,u)\in L^2(\R\times \R_+)\times 
L^{2}(\R)\right\}$. This  framework  will be sufficient for what we wish to do.  
 In what follows, $\gamma_0$ and $\gamma_1$ denote the usual trace and exterior normal trace 
operators. The operator $A$ is defined by
\begin{equation}\label{A}
A\left(\begin{array}{c} v\\u\end{array}\right)=\left(\begin{array}{c} -\Delta 
v\\(-\partial_{xx})^{\a}u +\mu u -\gamma_0 v+ku\end{array}\right).
\end{equation} 
 The domain of $A$ is 
\begin{equation}\label{domainA}
D(A)=\left\{( v,u)\in H^2(\R\times \R_+)\times H^{2\a}(\R) \ | \   \gamma_1v=\mu 
u- \gamma_0 v \right\}\subset X.
\end{equation}
Notice that the first component $v$ of an element of $A$ is a continuous 
function by the embedding result, thus the trace $\gamma_0 v$ is simply the 
value of $v$ at $y=0$.
From \cite{ACC_these},  the operator $A$ is sectorial in $X$. 
Cast  the problem \eqref{gdmodele3} in the form
$$
W_t+AW=F(W),\quad  \text { where} \quad W=\left(\begin{array}{c} v\\u\end{array}\right), \quad F\left(\begin{array}{c} v\\u\end{array}\right)=\left(\begin{array}{c} f(v)\\0\end{array}\right),
$$
where $f$ is $\mathcal{C}^{\infty}(\R)$
and extended in a smooth fashion so that $f(0)=f(1)=0$, and $f\equiv 0$ outside $(-1,2)$.
From Theorem 3.3.3 and Corollary 3.3.5 of \cite{Henry}, there is a unique 
global solution $(v,u)$ to \eqref{gdmodele3} starting from $(v_0,u_0)\in X$ 
such that $t\mapsto(v,u)(\cdot,t)$ belongs to
\begin{equation}\label{existencevu}
\mathcal{C}\big((0,+\infty)\,,\,H^2(\R \times \R_+)\times H^{2\a}(\R)\big)\; 
\cap \; 
\mathcal{C}^1\big((0,+\infty)\,,\,L^2(\R \times \R_+)\times L^{2}(\R)\big).
\end{equation}
We are going to prove the

\begin{theorem}
The solution $(v,u)$ of \eqref{gdmodele3} starting from $(0,u_0)$, for a 
continuous, non negative and compactly supported function $u_0\not\equiv 0$, 
satisfies
$(v,u)\in \mathcal{C}^{\infty}(\R \times \R_+ \times \R^*_+)\times 
\mathcal{C}^{\infty}(\R \times \R^*_+) .$
\end{theorem}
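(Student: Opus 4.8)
The starting point is the regularity already obtained above, namely $(v,u)\in\mathcal{C}\big((0,+\infty),H^2(\R\times\R_+)\times H^{2\a}(\R)\big)$ with $(\partial_t v,\partial_t u)\in\mathcal{C}\big((0,+\infty),L^2\times L^2\big)$, so that $v$ is bounded and continuous up to $\{y=0\}$ on $\R\times\R_+\times[\tau,T]$, and $u$ is continuous there, for every $0<\tau<T$. The plan is a bootstrap in which the two mechanisms producing regularity are kept separate. First, in the open set $\{y>0\}$ the function $v$ solves the autonomous semilinear heat equation $\partial_t v-\Delta v=f(v)$ with $f\in\mathcal{C}^\infty$ and $v$ locally bounded, so classical interior parabolic estimates ($L^p$ theory, then Schauder, iterated) give $v\in\mathcal{C}^\infty\big(\R\times(0,+\infty)\times(0,+\infty)\big)$, making no use of the coupling. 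It remains to propagate regularity up to $\{y=0\}$ and, simultaneously, to the road unknown $u$, the two being linked only through the boundary relations $-\partial_y v|_{y=0}=\mu u-v|_{y=0}$ and $\partial_t u+(-\partial_{xx})^\a u=-(\mu+k)u+v|_{y=0}$.

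For the coupled boundary bootstrap, set $g(x,t):=v(x,0,t)$ and assume $v(\cdot,t)\in H^{a}(\R\times\R_+)$, $a\geq2$, continuously in $t>0$. Then $g(\cdot,t)\in H^{a-1/2}(\R)$ by the trace theorem; feeding this into Duhamel's formula for $u$ with the analytic semigroup $e^{-\tau(-\partial_{xx})^\a}$, whose smoothing obeys $\|e^{-\tau(-\partial_{xx})^\a}\|_{H^p\to H^{p+\a}}\leq C\tau^{-1/2}$ (here $\a<2\a$), and absorbing the factor $e^{-(\mu+k)\tau}$, we get $u(\cdot,t)\in H^{a-1/2+\a}(\R)$, the $u_0$-contribution lying in $\bigcap_s H^s(\R)$ since the fractional heat semigroup is instantly smoothing. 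Next, treating $-\mu u(\cdot,t)\in H^{a-1/2+\a}(\R)$ and the interior source $f(v(\cdot,\cdot,t))\in H^{a}(\R\times\R_+)$ (the Nemytskii operator of a $\mathcal{C}^\infty$ function is smooth on $H^a(\R^2_+)$ for $a\geq2$) as data, $v$ solves in $\R\times\R_+$ the heat equation with the Robin boundary condition $\partial_y v|_{y=0}-v|_{y=0}=-\mu u$; parabolic regularity for this problem gains two derivatives on the interior source and three halves on the boundary datum, so
\begin{equation*}
v(\cdot,\cdot,t)\in H^{\min(a+2,\,a-1/2+\a+3/2)}(\R\times\R_+)=H^{a+1+\a}(\R\times\R_+),
\end{equation*}
up to an arbitrarily small loss from the time-integrability of the Duhamel kernels. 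Thus each round of the loop $v\mapsto g\mapsto u\mapsto v$ raises the Sobolev index by a fixed positive amount; starting from $a=2$ we reach $v(\cdot,\cdot,t)\in H^{a}(\R\times\R_+)$, $u(\cdot,t)\in H^{a}(\R)$ for every $a$ and every $t>0$, with bounds locally uniform in $t$, hence $v(\cdot,\cdot,t)\in\mathcal{C}^\infty(\R\times\R_+)$ and $u(\cdot,t)\in\mathcal{C}^\infty(\R)$.

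It remains to pass to joint space--time smoothness. Differentiating the two equations in $t$ expresses $\partial_t v$, $\partial_t u$, and inductively $\partial_t^j v$, $\partial_t^j u$, through space derivatives of lower-order time derivatives; by the previous step all the right-hand sides are continuous in $t$ with values in $\bigcap_s H^s$ in the space variables, hence so are $\partial_t^j v$ and $\partial_t^j u$ for every $j$. Sobolev embedding in $(x,y)$, resp.\ $x$, then shows that all mixed space--time derivatives of $v$ and $u$ exist and are continuous, i.e.\ $(v,u)\in\mathcal{C}^\infty(\R\times\R_+\times\R^*_+)\times\mathcal{C}^\infty(\R\times\R^*_+)$. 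Equivalently, once $f(v)$, the Robin datum $-\mu u$ and the source $-(\mu+k)u+g$ are $\mathcal{C}^\infty$ in space--time, one may simply invoke classical parabolic Schauder estimates up to $\{y=0\}$ for $v$ together with the smoothing properties of the fractional heat semigroup for $u$.

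The main obstacle is the second paragraph: one needs the sharp smoothing bound for the fractional heat semigroup and, above all, the parabolic regularity theory for the heat equation with the Robin condition coming from $-\partial_y v|_{y=0}=\mu u-v|_{y=0}$ (with the time loss controlled via Duhamel), so as to ensure that every step of the bootstrap gains a fixed positive number of derivatives and the iteration does not stall. Care near $t=0$ forces all estimates to be carried out on $\R\times\R_+\times[\tau,T]$ with $\tau>0$, and it is essential to keep the coupling-free interior smoothness of $v$ logically separate from its boundary smoothness, the latter being entirely driven by the regularity of $u$.
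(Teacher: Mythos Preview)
Your bootstrap is sound and reaches the same conclusion, but the route differs from the paper's in several respects. The paper works in parabolic H\"older spaces $\mathcal{C}^{n+\delta,(n+\delta)/2}$ and proceeds by \emph{differentiating the system}: at each induction step it shows that $(\partial_t v,\partial_t u)$ and then $(\partial_x v,\partial_x u)$ are themselves solutions of an abstract evolution equation in Henry's framework, and then invokes the Lady\v{z}enskaja--Solonnikov--Ural'ceva boundary estimates for the heat equation with the Robin condition to climb one order of H\"older regularity at a time; this forces a separate treatment of the range $\alpha\le 1/4$, where the embedding $H^{2\alpha}(\R)\hookrightarrow C^{2\alpha-1/2}(\R)$ fails and one must iterate $(-\partial_{xx})^{\alpha}$ on the road equation before the H\"older bootstrap can start. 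You instead run the loop $v\mapsto v|_{y=0}\mapsto u\mapsto v$ entirely in the Sobolev scale via Duhamel and semigroup smoothing, gaining a fixed quantum $1+\alpha$ in the spatial index per pass, and recover joint space--time smoothness only at the very end by expressing $\partial_t^j$ through spatial derivatives. Your approach is more streamlined in that it avoids the $\alpha\le 1/4$ case split and isolates the interior smoothness of $v$ as a free byproduct of the semilinear heat equation; conversely, the paper's version makes explicit the compatibility conditions and the anisotropic time exponents that you absorb into the phrase ``up to an arbitrarily small loss from the time-integrability of the Duhamel kernels''---this is precisely where the parabolic trace theory (matching the $H^{a-1/2+\alpha}$ spatial regularity of the Robin datum $-\mu u$ with the required half-order of time regularity) has to be checked, and it is the one place where your argument would need to be written out carefully.
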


\smallskip
\begin{proof}
This regularity result will be obtained by induction, proving the existence of 
a constant $\delta \in (0,1)$ such that,  for all $T>0$, $\eps 
\in (0,T)$  and $n\in \N$ : 
\begin{equation}\label{recurrence}
 v\in \mathcal{C}^{n+\delta, \frac{n+\delta}{2}}(\R \times \R_+ \times (\eps_{2n},T]) \quad \text{ and } \quad  u \in \mathcal{C}^{n+\delta, \frac{n+\delta}{2}}(\R \times (\eps_{2n},T]),
\end{equation}
where $\eps_{2n}=\dis\sum_{j=1}^{2n}2^{-j}\eps\underset{n\rightarrow+\infty}\longrightarrow \eps.$

We will  use the following results : \\
\textbf{Result 1 :} Elementary Sobolev embeddings. We have :
\begin{itemize}[\small$\bullet$]
\item $H^2(\R \times \R_+) \subset \mathcal{C}^{\lambda}(\R\times \R_+)$, for 
all $\lambda \in (0,1)$,
\item for $ \a \in \left(\frac{1}{4},1\right)$ : $ H^{2\a}( \R)\subset 
\mathcal{C}^{2\a - \frac{1}{2}}(\R)$.
\end{itemize} 
\textbf{Result 2 :}
Let $0<t_0<T$, $l>0$, and consider two functions $g \in 
\mathcal{C}^{l,\frac{l}{2}}(\R\times \R_+\times [t_0,T])$ and $u_1 \in 
\mathcal{C}^{1+l,\frac{1+l}{2} }(\R\times  [t_0,T])$. Then, from Theorem 4.5.3 in \cite{Lady}, the solution $v_1$ of

\begin{equation*}
\begin{array}{rcll}
\partial_t v_1 - \Delta v_1 &=&g ,& x \in \R, y>0, t >t_0, \\
-\partial_y v_1+v_1&=& \mu u_1, & x \in \R, y=0, t>t_0,
\end{array}
\end{equation*}
starting from $v_1(\cdot,\cdot,t_0) \in  \mathcal{C}^{l+2}(\R\times \R_+)$, for which the following compatibility conditions  of order $m_1:=\lfloor \frac{l+1}{2} \rfloor$ hold :
$$
\partial_t^{(m)}(-\partial_y v_1+v_1)(\cdot,\cdot,t_0) =\mu\partial_t^{(m)} 
u_1(\cdot,t_0), \quad \text{ for all } m =0,\dots, m_1,
$$
satisfies
$
v_1 \in  \mathcal{C}^{l+2,\frac{l}{2}+1}(\R\times \R_+\times [t_0,T]).$

With these two results,  we can prove \eqref{recurrence}. Let $T>0$,  $\eps \in (0,T)$  and  
$\eps_{i}=\dis\sum_{j=1}^{i}2^{-j}\eps.$ In view of Result 1, $\alpha=1/4$ is a 
special value. 
Thus we break the study into two cases: $\alpha\leq1/4$ and $\alpha>1/4$. We 
detail the latter case, and explain the needed modifications for the former 
one.
 
\noindent{\bf Case 1.}  $\alpha>1/4$.
\begin{itemize}[\small$\bullet$]
\item  Case $n=0$ : Since $(v,u)$ belongs to the space in \eqref{existencevu},  
Result 1 and Lemma 3.3.2 of \cite{Henry} yield a constant
$\delta >0$ such that  $(v,u) \in \mathcal{C}^{\delta, \frac{ \delta}{2}}(\R \times \R_+ \times (0,T])\times \mathcal{C}^{ \delta, \frac{ \delta}{2}}(\R \times (0,T]).$

\item Case $n=1$ : We first prove that $(\partial_tv ,\partial_tu )\in \mathcal{C}((\eps_1,T]),H^2(\R \times \R_+)\times H^{2\a}(\R))\cap \mathcal{C}^1((\eps_1,T]),L^2(\R \times \R_+)\times L^{2}(\R))$.
It is sufficient to prove that $(\partial_t v, \partial_t u)$ is solution to 
\begin{equation}\label{equationtemps}
\partial_t w + A w =F_1\left(w,t\right), \quad t>\eps_1,
\end{equation}
starting from $(\partial_tv(\cdot,\cdot,\eps_1),\partial_tu(\cdot,\eps_1))\in L^2(\R \times \R_+)\times L^{2}(\R)$, where $F_1$  is defined on $X \times \R_+$ by

\begin{equation}\label{defF1}
F_1\left(\left(\begin{array}{c}w_1  \\ w_2 \end{array}\right),t\right)= \left(\begin{array}{c}w_1 f'(v(\cdot,\cdot,t)) \\ 0 \end{array}\right).
\end{equation}

As is usual, we can not directly differentiate equation \eqref{gdmodele3} with respect to time, that is why we consider, for $h>0,$ the functions $v_h$ and $u_h$ defined on $\R\times \R_+\times \R_+$ by 
$$
v_h(\cdot,\cdot,t):=\frac{v(\cdot,\cdot,t+h)-v(\cdot,\cdot,t)}{h} \quad \text{ and } \quad u_h(\cdot,t):=\frac{u(\cdot,t+h)-u(\cdot,t)}{h}.
$$

For any $h>0$,  $(v_h,u_h)$ is in $D(A)$ and satisfies
\begin{equation}\label{equationvh}
\partial_t \left(\begin{array}{c}v_h  \\ u_h \end{array}\right) + A\left(\begin{array}{c}v_h  \\ u_h \end{array}\right) =\left(\begin{array}{c}\dis \frac{f(v(\cdot,\cdot,t+h))-f(v(\cdot,\cdot,t))}{h}  \\ 0 \end{array}\right), \quad t>0.
\end{equation}
Once again from Lemma 3.3.2. of \cite{Henry},  $v(x,y,\cdot)$ is H\"older continuous in time, uniformly in   $(x,y)\in\R\times\R_+$, which implies that $F_1$ satisfies the assumptions of Theorem 3.3.3 and Corollary 3.3.5 of \cite{Henry}. Thus, from \cite{Pazy}, we can pass to the limit as $h$ tends to $0$ in \eqref{equationvh} to get that $(\partial_t v, \partial_t u)$ is the solution to \eqref{equationtemps}, starting from $(\partial_tv(\cdot,\cdot,\eps_1),\partial_tu(\cdot,\eps_1))\in L^2(\R \times \R_+)\times L^{2}(\R)$. Thus, we conclude
\begin{equation}\label{regderiveetemps}
(\partial_tv,\partial_tu ) \in \mathcal{C}((\eps_1,T],H^2(\R \times \R_+)\times H^{2\a}(\R)) \cap \mathcal{C}^1((\eps_1,T],L^2(\R \times \R_+)\times L^{2}(\R)). 
\end{equation}

 We now study, for all $t\in[\eps_2,T]$, the couple $(\partial_x v(\cdot,\cdot,t),\partial_x u(\cdot,t))$. We first prove that, for all $t\in(0,T]$, $\partial_x u(\cdot,t)$ exists and
$$\partial_x u(\cdot,t)\in L^2(\R).$$
 From \eqref{existencevu} and \eqref{regderiveetemps}, we know that, for all $t\in [ \eps_2,T] :$
 $$u(\cdot,t)\in H^{2\a}(\R), \quad \partial_tu(\cdot,t)\in H^{2\a}(\R) \quad \text{ and } \quad v(\cdot,\cdot,t)\in H^2(\R\times \R_+).
$$

Applying the operator $(-\partial_{xx})^{\a}$ to the equation 
$$
\partial_tu(x,t) +(-\partial_{xx})^{\a}u(x,t)=-(\mu+k)u(x,t)+v(x, 0,t),
$$
we have for all $t \in [\eps_2,T]$ :
\begin{eqnarray*}
(-\partial_{xx})^{2\a}u(\cdot,t)
&=&-(\mu+k)(-\partial_{xx})^{\a}u(\cdot,t)+(-\partial_{xx})^{\a}v(\cdot, 0,t)-(-\partial_{xx})^{\a}\partial_tu(\cdot,t).
\end{eqnarray*}

This proves that for all $t \in [\eps_2,T]$,
$
 u(\cdot,t)\in H^{4\a}(\R)\subset H^{1}(\R).
$
As done in the case $n=1$, it is sufficient to prove that $(\partial_x v, \partial_x u)$ is the solution to 
\eqref{equationtemps},
starting from $(\partial_xv(\cdot,\cdot,\eps_2),\partial_x u(\cdot,\eps_2))\in L^2(\R \times \R_+)\times L^{2}(\R)$, where $F_1$  is defined in \eqref{defF1}.

Once again, we can not directly differentiate equation \eqref{gdmodele3} with respect to $x$, that is why we consider, for $h>0,$ the functions $v_h$ and $u_h$ defined on $\R\times \R_+\times \R_+$ by :
$$
v_h(x,\cdot,\cdot):=\frac{v(x+h,\cdot,\cdot)-v(x,\cdot,\cdot)}{h} \quad \text{ and } \quad u_h(x,\cdot):=\frac{u(x+h,\cdot)-u(x,\cdot)}{h}.
$$

Passing to the limit as $h$ tends to $0$ in the problem solved by $(v_h,u_h)$,  \cite{Pazy} gives that $(\partial_xv,\partial_xu)$ is solution to \eqref{equationtemps} with $(\partial_xv(\cdot,\cdot,\eps_2),\partial_xu(\cdot,\eps_2))$ as  initial datum, and consequently 
\begin{equation*}
(\partial_xv,\partial_xu ) \in \mathcal{C}((\eps_2,T],H^2(\R \times \R_+)\times H^{2\a}(\R)) \cap \mathcal{C}^1((\eps_2,T],L^2(\R \times \R_+)\times L^{2}(\R)). 
\end{equation*}
 \item Case $n=2$ :
%
%
To get the regularity of $v$,  we apply Result 2 with
$$u_1=u\in \mathcal{C}^{1+\delta, \frac{1+\delta}{2}}(\R \times [\eps_3,T]), \quad g=f(v)\in \mathcal{C}^{1+\delta, \frac{1+\delta}{2}}(\R \times \R_+\times [\eps_3,T]), $$
and  initial condition $ v(\cdot, \cdot, \eps_3)$, to get that 
$
v \in \mathcal{C}^{2+\delta, 1+\frac{\delta}{2}}(\R \times \R_+ \times [\eps_4,T]).$
It remains to prove the regularity on $u$,  more precisely that
$ \partial_tu \in \mathcal{C}^{ \delta, \frac{ \delta}{2}}(\R \times (\eps_5,T])$  and $\partial_{xx}u \in \mathcal{C}^{ \delta, \frac{ \delta}{2}}(\R \times (\eps_5,T]).$ This is done as in the case $n=1$, applying several times the operator $(-\partial_{xx})^\gamma$ to the equation for $u$, $\gamma$ being any positive number $<\alpha$.
\end{itemize}
Iterating, we get \eqref{recurrence}.

\noindent{\bf Case 2.}  $\alpha\leq1/4$.  It is enough to show that there is
$\delta >0$ such that  $(v,u) \in \mathcal{C}^{\delta, \frac{ \delta}{2}}(\R \times \R_+ \times (0,T])\times \mathcal{C}^{ \delta, \frac{ \delta}{2}}(\R \times (0,T]).$ From that, the proof of Case 1 applies. To prove that, one applies alternatively  Theorem 3.3.3 and Corollary 3.3.5 of \cite{Henry} to get, inductively,  that 
\begin{itemize}[\small$\bullet$]
\item $u\in C((0,T),H^{k\alpha}(\R))$ and $v\in C((0,T),H^{3/2+k\alpha}(\R^2_+))$
\item $\partial_tu\in C^\delta((0,T),H^{(k-1)\alpha}(\R))$ and $\partial_tv\in H^{3/2+(k-1)\alpha}(\R^2_+))$
\item and then, by Lemma 3.3.2 of \cite{Henry}, that $u\in C^\delta((0,T),H^{(k-1)\alpha}(\R))$,
\item and, finally, that  $v\in C^\delta((0,T),H^{3/2+(k-1)\alpha}(\R^2_+))$.
\end{itemize}
We are back to Case 1 as soon as $k\alpha>1/4$.
\end{proof}

Once we know that the solution $(v,u)$ to \eqref{gdmodele3} is regular in space and time, we can remove the trace operators and the Cauchy Problem is thought of in the classical sense.

\begin{remark}\label{RkA}
{\em In the particular case of $x$-independent solutions,
a similar proof as the one done implies the unique solvability in 
$\mathcal{C}((0,+\infty)\,,\,H^2(\R_+)\times \R)\;\cap$ $
\mathcal{C}^1((0,+\infty)\,,\,L^2(  \R_+)\times \R)$ 
for \eqref{gdmodele3} with an $x$-independent initial condition in 
$L^2(\R_+)\times \R$.}
\end{remark} 

\subsection{Comparison principle}\label{comparisonprinciple}
It will follow from standard arguments; however we give some details: 
  it is a crucial tool in the whole. Recall first that, from a straightforward 
  computation, we have  
 $$\int_{\R}(-\Delta)^{\alpha}h(x)h^+(x) dx \geq 0,
 $$ 
 for  all $h$   in $H^{2\a}(\R)$,

\smallskip
\begin{theorem}
Let $(v_1,u_1)$, $(v_2,u_2)$ be two couples 
in $\mathcal{C}((0,+\infty),(H^1(\R \times \R_+)\cap{\mathrm{Lip}}(\R\times\R_+))\times H^{2\a}(\R)) \cap 
\mathcal{C}^1((0,+\infty),L^2(\R \times \R_+)\times L^{2}(\R))$ that satisfy 
\begin{equation*}\left\{\begin{array}{rcll}
\partial_t{v_1}-\Delta v_1 &\leq&f(v_1),&x \in\R, y >0, t>0, \\
\partial_t{u_1}+(-\partial _{xx})^{\a}u_1&\leq& -\mu u_1 +\gamma_0v_1- k u_1,& x \in \R,y=0, t>0,\\
\gamma_1{v_1}&\leq&\mu u_1 - \gamma_0v_1, & x \in \R,y=0, t>0,\\
\end{array}\right.\end{equation*}
and
\begin{equation*}\left\{\begin{array}{rcll}
\partial_t{v_2}-\Delta v_2 &\geq&f(v_2),&x \in\R, y >0, t>0, \\
\partial_t{u_2}+(-\partial _{xx})^{\a}u_2&\geq& -\mu u_2 +\gamma_0v_2- k u_2,& x \in \R,y=0, t>0,\\
\gamma_1{v_2}&\geq&\mu u_2 - \gamma_0v_2, & x \in \R,y=0, t>0.\\
\end{array}\right.\end{equation*}
If for almost all $(x,y) \in \R\times\R_+$
$v_1(x,y,0)\leq v_2(x,y,0)$   and $u_1(x,0)\leq u_2(x,0),$
then for all $(x,y,t) \in \R\times\R_+\times\R_+$, we have
$v_1(x,y,t)\leq v_2(x,y,t)$ and $u_1(x,t)\leq u_2(x,t) .$
\end{theorem}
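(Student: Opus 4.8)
The plan is to prove the comparison principle by an $L^2$ energy estimate on the positive parts of the differences. Set $v:=v_1-v_2$ and $u:=u_1-u_2$. Subtracting the two families of inequalities, and using that $f$ is globally Lipschitz on $\R$ (it is $\mathcal C^\infty$ and supported in $(-1,2)$; write $L_f$ for a Lipschitz constant), one gets that $v$ and $u$ satisfy, in the weak sense,
\begin{gather*}
\partial_t v-\Delta v\leq f(v_1)-f(v_2),\qquad
\partial_t u+(-\partial_{xx})^\a u\leq -(\mu+k)u+\gamma_0 v,\\
\gamma_1 v\leq \mu u-\gamma_0 v\quad\text{on }\R\times\{0\}.
\end{gather*}
The idea is to test the field inequality against $v^+$ and integrate over $\O=\R\times(0,+\infty)$, to test the road inequality against $u^+$ and integrate over $\R$, and then to add the two: the boundary condition will knit the coupling terms produced by the integrations by parts into a coercive contribution, after which Grönwall's lemma closes the argument.

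First I would multiply the inequality for $v$ by $v^+\geq0$ and integrate over $\O$. Using the chain rule $\tfrac12\tfrac{d}{dt}\norm{v^+}_{L^2(\O)}^2=\int_\O v^+\partial_t v$, the Green identity $\int_\O(-\Delta v)v^+=\int_\O\abs{\nabla v^+}^2-\int_\R\gamma_1 v\,\gamma_0 v^+\,dx$, the pointwise bound $\int_\O(f(v_1)-f(v_2))v^+\leq L_f\norm{v^+}_{L^2(\O)}^2$ (on $\{v\leq0\}$ the integrand vanishes, on $\{v>0\}$ it is $\leq L_f (v^+)^2$), and writing $V:=\gamma_0 v^+=(\gamma_0 v)^+\geq0$, this yields
\begin{equation*}
\tfrac12\tfrac{d}{dt}\norm{v^+}_{L^2(\O)}^2+\int_\O\abs{\nabla v^+}^2-\int_\R\gamma_1 v\,V\,dx\leq L_f\norm{v^+}_{L^2(\O)}^2 .
\end{equation*}
Since $\gamma_1 v\leq\mu u-\gamma_0 v$, $V\geq0$, and $(\gamma_0 v)V=V^2$, one has $-\gamma_1 v\,V\geq V^2-\mu uV\geq V^2-\mu u^+V$, so the boundary integral above is bounded below by $\int_\R V^2-\mu\int_\R u^+V$.

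In parallel, I would multiply the inequality for $u$ by $u^+\geq0$ and integrate over $\R$. The key point here is the inequality $\int_\R(-\partial_{xx})^\a u\,u^+\,dx\geq0$ recalled just before the statement, which eliminates the nonlocal term; together with $-(\mu+k)\int_\R uu^+\leq0$ and $\int_\R\gamma_0 v\,u^+\leq\int_\R Vu^+$ it gives $\tfrac12\tfrac{d}{dt}\norm{u^+}_{L^2(\R)}^2\leq\int_\R Vu^+\,dx$. Adding this to the previous estimate, discarding $\int_\O\abs{\nabla v^+}^2\geq0$, and setting $E(t):=\norm{v^+(\cdot,\cdot,t)}_{L^2(\O)}^2+\norm{u^+(\cdot,t)}_{L^2(\R)}^2$, one obtains
\begin{equation*}
\tfrac12 E'(t)+\int_\R V^2\,dx\leq L_f\norm{v^+}_{L^2(\O)}^2+(\mu+1)\int_\R u^+V\,dx .
\end{equation*}
The Young inequality $(\mu+1)u^+V\leq V^2+\tfrac14(\mu+1)^2(u^+)^2$ absorbs $\int_\R V^2$ and leaves $\tfrac14(\mu+1)^2\norm{u^+}_{L^2(\R)}^2$, whence $E'(t)\leq C E(t)$ for a constant $C$ depending only on $L_f$ and $\mu$. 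Since $v_1(\cdot,\cdot,0)\leq v_2(\cdot,\cdot,0)$ and $u_1(\cdot,0)\leq u_2(\cdot,0)$ a.e., and the solutions are continuous into $L^2$ down to $t=0$ (otherwise apply Grönwall from $t_0>0$ and let $t_0\to0$), $E(0^+)=0$, so Grönwall's lemma forces $E\equiv0$, that is $v_1\leq v_2$ and $u_1\leq u_2$.

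I expect the only genuine obstacle to be the sign bookkeeping on the coupling: one must check that the boundary term $-\int_\R\gamma_1 v\,V$ coming from the field equation and the exchange term $\int_\R\gamma_0 v\,u^+$ coming from the road equation combine, through $\gamma_1 v\leq\mu u-\gamma_0 v$, into the coercive quantity $\int_\R V^2$ plus a cross term that Young's inequality tames — this is exactly what makes the energy identity close, and any slip in the sign convention for the exterior normal trace would break it. The remaining ingredients are standard and would be dispatched quickly: the chain rule $\tfrac{d}{dt}\int(v^+)^2=2\int v^+\partial_t v$ (Stampacchia), the integration by parts using $v^+\in H^1(\O)$ as a test function, whose trace is $(\gamma_0 v)^+$, and the finiteness of all the integrals, guaranteed by $v_i\in H^1(\O)$ and $u_i\in H^{2\a}(\R)$.
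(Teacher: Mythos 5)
Your proof is correct and follows essentially the same route as the paper's: test the positive parts of the differences against the field and road inequalities, use Green's formula with the exchange boundary condition, the positivity of $\int_\R(-\partial_{xx})^\a u\,u^+$, and Grönwall. The only (harmless) variation is that you retain the coercive boundary term $\int_\R V^2$ and absorb the cross term by Young's inequality, whereas the paper discards it and instead absorbs the cross term into $\int_\O\abs{\nabla v^+}^2$ via the continuity of the trace operator; both close the estimate.
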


\begin{proof}
Let $l>0$ be a constant greater than the Lipschitz constant of $f$. Set
\begin{equation}\label{v3u3}
(v_3(x,y,t),u_3(x,t)):=(v_1(x,y,t),u_1(x,t))e^{-lt}-(v_2(x,y,
t),u_2(x,t)) e^{-lt}, 
\end{equation}
we have
\begin{equation}\label{diff}\left\{\begin{array}{rcll}
\partial_t{v_3}-\Delta v_3 &\leq&e^{-lt}f(v_1)-e^{-lt}f(v_2)-lv_3 ,&x \in\R, y >0, t>0, \\
\partial_t{u_3}+(-\partial _{xx})^{\a}u_3&\leq& -\mu u_3 +\gamma_0v_3- k u_3-lu_3,& x \in \R,y=0, t>0,\\
\gamma_1{v_3}&\leq&\mu u_3 -\gamma_0 v_3, & x \in \R,y=0, t>0.\\
\end{array}\right.\end{equation}
Almost everywhere in $\R\times \R_+$ and $\R$ we have $
v_3(\cdot,\cdot,0)\leq 0 $ and $ u_3(\cdot,0)\leq 0.$
Multiply the first equation of \eqref{diff} by $v_3^+$ and integrate over $\R\times\R_+$ to get

\begin{equation}\label{v_x}
\iint_{x\in\R, y>0} \partial_t v_3{v_3^+} dxdy-\iint_{x\in\R, y>0}\Delta{v_3}v_3^+ dxdy\leq  { \iint_{x\in\R, y>0} (l\abs{v_3}-lv_3) v_3^+dxdy=0.}
\end{equation}
Since $v_1$ and $v_2$ belong to $\mathcal{C}((0,+\infty),H^1(\R \times \R_+)) \cap \mathcal{C}^1((0,+\infty),L^2(\R \times \R_+))$, we have classically
\begin{equation}\label{deriveenorme}
\iint_{x\in\R, y>0} \partial_t v_3{v_3^+} dxdy=\frac{1}{2} \frac{d}{dt} \left(\iint_{x\in\R, y>0}\abs{v_3^+}^2 dxdy\right).
\end{equation}
 
Using the third equation of \eqref{diff} and the fact that $\gamma_0v_3^+\geq 0$, we have

$$
\iint_{x\in\R, y>0}\Delta{v_3}v_3^+ dxdy\leq-\iint_{x\in\R, y>0}\abs{\nabla{v_3^+}}^2dxdy+\mu\int_{ \R}u_3\gamma_0v_3^+ dx-\int_{ \R}\abs{\gamma_0v_3^+}^2 dx.
$$

Inserting  this last inequality and \eqref{deriveenorme} in \eqref{v_x}, we get

\begin{equation}\label{utile1}
\frac{1}{2}\frac{d}{dt} \left(\iint_{x\in\R, y>0}\abs{v_3^+}^2 dxdy\right)\leq -\iint_{x\in\R, y>0}\abs{\nabla{v_3^+}}^2dxdy+\mu\int_{ \R}u_3^+\gamma_0v_3^+ dx.
\end{equation}

Working similarly with $u_3^+$, we get

\begin{equation}\label{utile2}
\frac{1}{2}\frac{d}{dt} \left(\int_{ \R}\abs{u_3^+}^2 dx\right)\leq \int_{ \R}u_3^+\gamma_0v_3^+ dx.
\end{equation}

The continuity of the trace operator  gives a constant $C_{tr}>0$ such that 
$$
\norm{\gamma_0 v_3^+}_{L^2(\R)}^2\leq C_{tr}^2\left(\norm{v_3^+}_{L^2(\R\times \R_+)}^2+\norm{\nabla{v_3^+}}_{L^2(\R\times \R_+)}^2\right).
$$
Add \eqref{utile1} and \eqref{utile2}, and use the continuity of the trace operator:

$$
\dis \frac{1}{2}\frac{d}{dt} \left(\norm{{u_3^+}}_{L^2(\R)}^2\right.
+\left.\norm{v_3^+}_{L^2(\R\times \R_+)}^2 \right)
\leq C\dis \left(\norm{{u_3^+}}_{L^2(\R)}^2+\norm{v_3^+}_{L^2(\R\times \R_+)}^2 \right),
$$
where $C>0$ is a universal constant.
Since $u_{3}^+(\cdot,0)=0$ and $v_{3}^+(\cdot,\cdot,0)=0$ almost everywhere, we have for all $t\geq 0$, $u_{3}^+(\cdot,t)=0$ and $v_{3}^+(\cdot,\cdot,t)=0$  almost everywhere, which concludes the proof.
\end{proof}

\bigskip
The above comparison principle is stated for classical solutions whose 
initial 
condition belongs to $X=L^2(\R\times \R_+)\times 
L^{2}(\R)$. However, it is necessary for later purposes to have 
a similar result for solutions starting from  $x$-independent initial data, 
which therefore do not belong to $X$.

\begin{theorem} Let $(v_1,u_1)$ be as in Theorem A.2, and 
$(v_2,u_2)\in \mathcal{C}((0,+\infty),H^2( \R_+)\times \R) \cap 
\mathcal{C}^1((0,+\infty),L^2( \R_+)\times \R)$ be such that
\begin{equation*}\left\{\begin{array}{rcll}
\partial_t{v_2}-\partial_{yy} v_2 &\geq&f(v_2),&x \in\R, y >0, t>0, \\
{u_2}'&\geq& -\mu u_2 +\gamma_0v_2- k u_2,& x \in \R,y=0, t>0,\\
\gamma_1{v_2}&\geq&\mu u_2 - \gamma_0v_2, & x \in \R,y=0, t>0.\\
\end{array}\right.\end{equation*}
If  $v_1(\cdot,\cdot,0)\leq v_2(y,0)$ and  $u_1(\cdot,0)\leq u_2(0)$ , then 
$v_1(\cdot,\cdot,t)\leq v_2(\cdot,t)$  and  $u_1(\cdot,t)\leq u_2(t) .$
\end{theorem}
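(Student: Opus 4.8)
The obstruction is clear: $(v_2,u_2)$ is $x$-independent, hence $(v_2,u_2)\notin X=L^2(\R\times\R_+)\times L^2(\R)$, so the difference $(v_1-v_2,u_1-u_2)$ is not square integrable in $x$ and the energy computation of the $L^2$ comparison principle does not apply directly. The plan is to add to $(v_2,u_2)$ a small barrier that grows slowly in $x$, so that the relevant positive parts become compactly supported in $x$, to rerun the energy argument on that compact region (where all integrals converge), and finally to let the barrier tend to $0$. (An alternative would be to multiply by an $x$-cutoff $\zeta_R$ and send $R\to+\infty$, but this creates commutator terms for the fractional Laplacian; the barrier avoids them.)

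First I would record that $v_1$ is bounded on $\R\times\R_+$ and tends to $0$ at infinity (being in $H^1\cap\mathrm{Lip}$), and likewise that $u_1$ is bounded — this is automatic in the situations where the theorem is used, since there $(v_1,u_1)$ is the smooth solution furnished by the regularity theorem of the Appendix; call $M$ a common bound. Let $L:=\mathrm{Lip}(f)$, fix $l>L$, fix an exponent $p\in(0,2\a)$, write $\langle x\rangle:=(1+x^2)^{1/2}$, and for $\delta>0$ and a large constant $K$ set
\[
w(x,y,t):=\delta e^{Kt}\langle x\rangle^{p},\qquad \o v:=v_2+w,\qquad \o u:=u_2+c_0\, w(\cdot,0,\cdot),\qquad c_0:=1/\mu.
\]
The point of $p<2\a$ is that $\langle x\rangle^{p}$ then possesses a convergent fractional Laplacian with $\abs{(-\partial_{xx})^{\a}\langle x\rangle^{p}}\leq C\langle x\rangle^{p}$; one also has $\abs{\partial_{xx}\langle x\rangle^{p}}\leq C\langle x\rangle^{p}$. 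Using these two bounds, the supersolution inequalities satisfied by $(v_2,u_2)$, the fact that $w$ is $y$-independent (so $\gamma_1 w=0$, and the exchange inequality for $(\o v,\o u)$ reduces, because $\mu c_0=1$, to the trivial $(\mu c_0-1)\,w(\cdot,0,\cdot)\leq 0$), and the Lipschitz bound $\abs{f(\o v)-f(v_2)}\leq Lw$, a direct computation shows that $(\o v,\o u)$ is a supersolution of \eqref{gdmodele3} as soon as $K$ is large enough in terms of $C,L,\mu,k$.

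Next I would compare $(v_1,u_1)$ with $(\o v,\o u)$. Since $\o v\geq\delta\langle x\rangle^{p}$ and $\o u\geq c_0\delta\langle x\rangle^{p}$ for $t\geq 0$, while $\abs{v_1},\abs{u_1}\leq M$, the positive parts $z^+:=\big((v_1-\o v)e^{-lt}\big)^+$ and $\zeta^+:=\big((u_1-\o u)e^{-lt}\big)^+$ are supported, for every $t\geq 0$, in the fixed compact set $\{\langle x\rangle\leq(M/(c_0\delta))^{1/p}\}$, where all the functions involved are as regular as in the $L^2$ comparison principle. One may then repeat that computation verbatim — multiply the field (subsolution minus supersolution) inequality by $z^+$ and integrate over $\R\times\R_+$, multiply the road one by $\zeta^+$ and integrate over $\R$, and add — every integral now being finite. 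The only extra point is the term $\int_\R(-\partial_{xx})^{\a}\big((u_1-\o u)e^{-lt}\big)\,\zeta^+$: writing $h:=(u_1-\o u)e^{-lt}$ it equals $\tfrac{c_\a}{2}\iint\frac{(h(x)-h(y))(h^+(x)-h^+(y))}{\abs{x-y}^{1+2\a}}\,dx\,dy$, which converges (since $h^+$ is compactly supported while $h^-$ grows only like $\langle x\rangle^{p}$ with $p<2\a$) and is nonnegative, because $(h(x)-h(y))(h^+(x)-h^+(y))\geq(h^+(x)-h^+(y))^2$ pointwise. As in the $L^2$ case the boundary contributions pair up through the exchange condition, leaving $\frac{d}{dt}\big(\norme{z^+(\cdot,\cdot,t)}^2+\norme{\zeta^+(\cdot,t)}^2\big)\leq C\big(\norme{z^+(\cdot,\cdot,t)}^2+\norme{\zeta^+(\cdot,t)}^2\big)$. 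Both norms tend to $0$ as $t\to 0^+$, since $v_1(\cdot,\cdot,0^+)\leq v_2(\cdot,0^+)<\o v(\cdot,\cdot,0^+)$ and similarly for $u$, so Gr\"onwall's lemma gives $z^+\equiv\zeta^+\equiv 0$; that is, $v_1\leq v_2+\delta e^{Kt}\langle x\rangle^{p}$ and $u_1\leq u_2+c_0\delta e^{Kt}\langle x\rangle^{p}$ for all $t\geq 0$. Letting $\delta\to 0$ concludes.

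The main obstacle is the construction of the barrier: it must be a supersolution simultaneously for the local field equation, for the nonlocal road equation — which forbids exponential growth in $x$ and imposes $p<2\a$ — and for the exchange boundary condition, which forces the road component to be at most $1/\mu$ times the field one; the secondary technical point is justifying the nonlocal energy inequality $\int(-\partial_{xx})^{\a}h\cdot h^+\geq 0$ for a function $h$ whose negative part is unbounded but whose positive part is compactly supported, which is again where the constraint $p<2\a$ is used.
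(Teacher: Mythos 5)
Your argument is correct, but it follows a genuinely different route from the paper's. The paper disposes of this theorem in two lines: it observes that in the energy computation of Theorem A.2 the only quantities that ever get integrated are the positive parts $v_3^+,u_3^+$ of $(v_1-v_2)e^{-lt}$ and $(u_1-u_2)e^{-lt}$, and that these still belong to $L^2(\R\times\R_+)\times L^2(\R)$ even though $(v_2,u_2)$ itself does not (the subsolution is in $X$ and the $x$-independent supersolution only enters through its, nonpositive contribution to, the positive part); the same Gr\"onwall inequality then closes the argument verbatim. You instead perturb the supersolution by a slowly growing barrier $\delta e^{Kt}\langle x\rangle^{p}$ with $p<2\a$, which forces the positive parts to be compactly supported in $x$, rerun the energy estimate there, and let $\delta\to0$. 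Your route costs more (the barrier must be a supersolution of all three relations simultaneously, whence the constraints $p<2\a$ and $c_0=1/\mu$, and you need $(v_1,u_1)$ bounded and $v_2$ bounded below -- the latter follows from $H^2(\R_+)\subset L^\infty(\R_+)$, the former is not literally in the hypotheses, though it holds in every application), but it buys a cleaner justification of the convergence of every integral, in particular of the nonlocal term $\int(-\partial_{xx})^{\a}h\cdot h^+$ for an $h$ that is not in $H^{2\a}(\R)$, a point the paper's one-line adaptation glosses over.

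Two small remarks. First, your pointwise inequality $(h(x)-h(y))(h^+(x)-h^+(y))\geq(h^+(x)-h^+(y))^2$ is exactly the right way to recover nonnegativity of the fractional term when $h$ is unbounded below; it is worth noting that in the paper's setting the same term is even simpler, since $(-\partial_{xx})^{\a}$ annihilates the constant $u_2$, so the operator only ever acts on $u_1\in H^{2\a}(\R)$. Second, the support of $z^+,\zeta^+$ is uniform only on bounded time intervals $[0,T]$ (because of the factor $e^{Kt}$ it actually shrinks in $t$, so this is harmless), and the conclusion for all $t$ follows by exhausting $\R_+$ by such intervals.
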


The proof is a straightforward adaptation of that of Theorem  A.2. 
Indeed, the points where the integrability is required involve the positive 
parts of the couple $(v_3,u_3)$ defined by \eqref{v3u3}, which do belong to 
the desired spaces. 
For the same reason, one can also handle the case where both the sub and the 
supersolution are 
$x$-independent and the subsolution belongs to $\mathcal{C}((0,+\infty),H^2( 
\R_+)\times \R) \cap 
\mathcal{C}^1((0,+\infty),L^2( \R_+)\times \R)$, which is even simpler because 
one is reduced to a problem in one less spatial dimension.

%

\end{document}